\documentclass[leqno,11pt]{amsart}
\usepackage{amssymb,amsmath,latexsym,amsfonts,amsbsy,amsthm,mathtools,graphicx,color,subeqnarray,cases,tikz} 
\usepackage{amsfonts}
\usepackage{color}
\usepackage{amssymb,amsthm,amsmath,hyperref}
\usepackage{epsfig}
\usepackage{graphicx,float}
\usepackage{CJK}
\usepackage{setspace}
\usepackage{cancel}
\usepackage{verbatim}
\usepackage{ulem}
\newtheorem{thm}{Theorem}[section]

\newtheorem{lem}[thm]{Lemma}
\newtheorem{rem}[thm]{Remark}
\newtheorem{prop}[thm]{Proposition}

\newcommand{\beq}{\begin{eqnarray}}
\newcommand{\eeq}{\end{eqnarray}}
\newcommand{\beqno}{\begin{eqnarray*}}
\newcommand{\eeqno}{\end{eqnarray*}}
\newcommand{\be}{\begin{equation}}
\newcommand{\ee}{\end{equation}}
\newcommand{\beno}{\begin{equation*}}
\newcommand{\eeno}{\end{equation*}}

\newcommand{\R}{\mathbb{R}}
\newcommand\Z{\mathbb{Z}}

\newcommand\fr{\frac}

\def\e{{\rm e}}
\def\dd{{\rm d}}

\def\ddt{{\frac{\dd}{\dd t}}}
\def\R {\mathbb{R}}

\def \l {\langle}
\def \r {\rangle}
\def\T {{\mathbb T}}
\def\de{{\partial}}

\newcommand\om{\omega}

\newcommand\RR{\mathfrak{R}}

\newcommand{\n}{\notag}

\newcommand{\norm}[1]{\left\lVert#1\right\rVert}

\numberwithin{equation}{section}  % 使 equation 环境的编号随 section 递增
\allowdisplaybreaks[4]

\begin{document}
\title[2D Navier-Stokes Boussinesq equations around the Poiseuille flow]{Stability threshold of the Poiseuille flow in   2D Navier-Stokes Boussinesq equations}

\author{Nangao Zhang }
\address[N. Zhang]{Corresponding author. Research Center for Applied Mathematics and Interdisciplinary Sciences,  School of Mathematics and Statistics, Wuhan Textile University, Wuhan 430200, P. R. China.}
\email{mathngzhang@163.com}

%\author{Gengwei Zhong}
%\address[G. Zhong]{School of Mathematics and Statistics,  Central China Normal University, Wuhan,  430079,  P. R. China.}
%\email{gwzhong@mails.ccnu.edu.cn}

%\author{Ruizhao Zi}
%\address[R. Zi]{School of Mathematics and Statistics, and Key Laboratory of Nonlinear Analysis \& Applications (Ministry of Education), Central China Normal University, Wuhan,  430079,  P. R. China.}
%\email{rzz@ccnu.edu.cn}

\date{\today}

\begin{abstract}
We study the transition threshold problem for the 2D Navier--Stokes Boussinesq equations around the Poiseuille flow on three domains: $\mathbb{T} \times \mathbb{R}$, $\mathbb{T} \times [0, \infty)$, and $\mathbb{T} \times [-1, 1]$. 
More precisely, we prove that if the initial perturbations of the vorticity and the temperature are assumed size $O(\min\{\nu^{\fr34},\mu^{\fr34},\nu^{-\fr14}\mu\})$ and $O(\min\{\nu^{\fr94},\mu^{\fr94}\})$ in an anisotropic Sobolev space respectively, then   for the solution to this system, its size remains so for all time, and the nonlinear enhanced dissipation persists with a rate proportional to $O(\min\{\nu^{\fr12},\mu^{\fr12}\})$.

\end{abstract}	

\subjclass[2020]{35Q35, 35B40, 35Q30.}
    \keywords{transition threshold, Boussinesq equations, Poiseuille flow.}
    
\maketitle

\section{Introduction}

In this paper we consider the two-dimensional Navier-Stokes Boussinesq equations on the domain $\T\times D$, where $D=\R, D=[0, \infty)$, or $D=[-1,1]$:
\begin{equation}\label{1.1}
	\begin{cases}
		\partial_t v+v \cdot \nabla v+\nabla \Pi-\nu \Delta v=\vartheta g e_2, \\
		\partial_t \vartheta+v \cdot \nabla \vartheta-\mu\Delta \vartheta=0, \\
		\operatorname{div} v=0, \ \ \ \ \  (x,y)\in \T\times D,\\ 
		 \de_y v_1(t,x,y\in \de D)=2y,\ v_2(t,x,y\in \de D)=0,\ \vartheta(t,x,y\in \de D)=a,\\
	\end{cases}
\end{equation}
where $v=\left(v^1(t, x, y), v^2(t, x, y)\right)^T$is the velocity field, $\vartheta=\vartheta(t, x, y)$ is the temperature, and $\Pi=\Pi(t, x, y)$ is the pressure. 
Moreover, $g$ is the gravitational constant and $e_2=(0,1)^{T}$ is the unit vector in the vertical direction.	
The positive  constants $\nu$ and $\mu$ represent the viscosity coefficient and the thermal diffusivity, respectively; 
$a$ is an arbitrary constant.
 Without loss of generality, we normalize the constants such that $ g = 1$ for simplicity.

Defining the vorticity of $v$ as $\Omega :=\nabla^\bot \cdot v$, where $\nabla^\perp=(-\de_y,\de_x)$,
 it is possible to remove the pressure term and to rewrite the above system as
\begin{align}\label{1.2}
\begin{cases}
\de_t \Omega+v\cdot \nabla \Omega-\nu \Delta\Omega=\de_x\vartheta,\\
\partial_t \vartheta+v \cdot \nabla \vartheta-\mu \Delta \vartheta=0, \\
v=\nabla^\perp \Phi, \quad \Delta\Phi=\Omega,\ \ \ \ \  (x,y)\in \T\times D,\\ 
\Omega(t,x,y\in \de D)=-2y,\ \ \vartheta(t,x,y\in \de D)=a,
\end{cases}
\end{align}
where $\Phi(t,x,y)$ is the corresponding stream-function.

Obviously, system \eqref{1.1} admits a class of steady-states, where the velocity field is the \emph{Poiseuille flow}, the pressure is a linear function of the vertical component and the temperature is a constant, namely
\begin{equation}\notag
v_{s}(x,y)=(y^2,0), \ \ \ \Pi_{s}(x,y)=2\nu x+ay+b, \ \ \ \vartheta_s=a,	
\end{equation}
here $a,b \in \mathbb{R}$ are arbitrary constants.
The main goal of this article is to prove quantitative stability results for the Poiseuille flow. 
 Let us define the perturbations as $u=v-(y^2,0)$ and $\theta=\vartheta-a$, with corresponding $\omega=\nabla^\bot \cdot u=\Omega+2y$, then one can rewrite \eqref{1.2} as
\begin{align}\label{1.3}
\begin{cases}
\de_t \omega+y^2\de_x\omega -2\de_x\phi-\nu\Delta \omega+u\cdot \nabla \omega=\de_x\theta,\\
\partial_t \theta+y^2\de_x\theta+u \cdot \nabla \theta-\mu \Delta \theta=0, \\ 
u=\nabla^\perp \phi, \quad \Delta\phi=\omega,\\ 
\omega(0,x,y)=\omega_{in}(x,y),\ \ \ \theta(0,x,y)=\theta_{in}(x,y), \ \  (x,y)\in \T\times D,\\ 
\omega(t,x,y\in \de D)=0,\ \ \theta(t,x,y\in \de D)=0,\\ 
\end{cases}
\end{align}
where $u$, $\omega$ and $\phi$ are the perturbations of the velocity, vorticity and stream-function near the Poiseuille flow. 

In this paper, we will study the following stability threshold problem.

{\it Given norms $\|\cdot\|_{Y_1}$ and $\|\cdot\|_{Y_2}$, find $\alpha=\alpha\left(Y_1, Y_2\right)$ and $\beta=\beta\left(Y_1, Y_2\right)$ such that

$$
\begin{aligned}
	& \left\|u_{\text {in}}\right\|_{Y_1} \leq \nu^\alpha \text { and }\left\|\theta_{\text {in}}\right\|_{Y_2} \leq \nu^\beta \Rightarrow \text { stability; } \\
	& \left\|u_{\text {in}}\right\|_{Y_1} \gg \nu^\alpha \text { or }\left\|\theta_{\text {in}}\right\|_{Y_2} \gg \nu^\beta \Rightarrow \text { instability. }
\end{aligned}
$$}
The exponent $\alpha $ and $\beta$ is referred to as the transition threshold.

\subsection{Background and previous works}
The Navier-Stokes Boussinesq (NSB) equations describes the dynamic behavior of incompressible and non-uniform fluid flows driven by buoyancy. This system is typically used to simulate atmospheric sciences, oceanic circulation, mantle convection, and engineering heat transfer (cf. \cite{Gill1982,Pedlosky1987}), as well as to model Rayleigh-B\'enard convection in laminar and turbulent flow states, for example, see \cite{Doering1996,Getling1998}. 
From a mathematical perspective, the two-dimensional inviscid NSB system can be related to the three-dimensional Euler equation for axisymmetric rotating flows away from the singularity. 
Moreover, the two-dimensional NSB system retains some key features of the three-dimensional Euler and Navier-Stokes equations, such as the vortex stretching mechanism \cite{Majda2002}.

Due to its wide application in physics and its significance in mathematics, the theory of the NSB equations has received extensive attention.
According to different regions, the research on this equation can be roughly divided into the following categories:

For the region $\mathbb{R} \times \mathbb{R}$, when considering perturbations around the constant equilibrium states $v_s=(0,0), \theta_s=1$, Chae \cite{Chae} obtained
 the global in time regularity with either the zero diffusivity $\mu=0$ or the zero viscosity $\nu=0$, Cao and Wu \cite{Cao2013} proved the global regularity of this system. 
 For the hydrostatic equilibrium states $v_s=(0,0), \vartheta_s=\vartheta_s(y)$, Doering et al. \cite{Doering2018} studied the long time behavior of solution to this system without buoyancy diffusion.
 For other related results, refer to \cite{Danchin,Li2016,Tao2020} and references therein.

On the region $\mathbb{T} \times \mathbb{R}$, for the perturbations around the Couette flow without hydrostatic equilibrium, i.e., $v_s = (y, 0)$, $\vartheta_s = 1$, Tao and Wu~\cite{Tao2019} obtained the linear stability of system ~\eqref{1.1}. 
 Subsequently, by constructing some Fourier multipliers, 
 Deng et al.~\cite{Deng2020} proved that for viscosity coefficient $\nu=\mu>0$, if the initial perturbation satisfies that for $b>1$,
\[
\|\omega_{in}\|_{H^b} \ll \nu^{\frac{1}{2}},\quad \|\theta_{in}\|_{H^b} \ll \nu,\quad \||D_x|^{\frac{1}{3}}\theta_{in}\|_{H^b} \ll \nu^{\frac{5}{6}},
\]
the Boussinesq system with full dissipation is nonlinear stability near Couette flow. 
For the case with only vertical dissipation, they also obtained the nonlinear stability results when
\[
\|\omega_{in}\|_{H^b} \ll \nu^{\frac{2}{3}},\quad \|\theta_{in}\|_{H^b} \ll \nu,\quad \||D_x|^{\frac{1}{3}}\theta_{in}\|_{H^b} \ll \nu ,
\] 
with $b>\frac{4}{3}$.
Later, Zhang and Zi \cite{ZhangZi} obtained the nonlinear stability of Couette flow for the Boussinesq equations with vertical dissipation if the initial data satisfies
\[
\|u_{in}\|_{H^{b+1}} + \nu^{-\frac{1}{2}}\|\theta_{in}\|_{H^b} + \nu^{-\frac{1}{3}}\||D_x|^{\frac{1}{3}} \theta_{in}\|_{H^b} \ll \nu^{\frac{1}{3}}, 
\]
with $b > 7$, thereby improving the results in \cite{Deng2020}.
Recently, by using the idea of quasi-linearization, Niu and Zhao \cite{Niu} obtained the nonlinear stability of Couette flow for the Boussinesq equations with full dissipation when the initial data satisfies
\[
\|u_{in}\|_{H^{b+1}} + \nu^{-\frac{1}{3}}\|\theta_{in}\|_{H^b} + \nu^{-\frac{1}{3}}\|\left \langle D_x \right \rangle  \theta_{in}\|_{H^b}\ll \nu^{\frac{1}{3}}, \quad b > 5.
\]
If the equilibrium states are Couette flow with hydrostatic equilibrium, i.e., $v_s = (y, 0)$, $\vartheta_s = \vartheta_s(y)$, Yang and Lin~\cite{yang2019} proved the linear stability of system~\eqref{1.1} in the inviscid case. 
Bedrossian et al.~\cite{Bedrossian20231} established the nonlinear shear-buoyancy instability until the nonlinear timescale $t \approx \varepsilon^{-2}$. 
More recently, Zhai et al.~\cite{zhai2022} studied the nonlinear asymptotic stability of the Couette flow in the stably stratified regime.
For other results, refer to \cite{MSZ} and the references therein.
 
For the region $\mathbb{T} \times [-1, 1]$, the research on stability is still relatively limited. 
When the steady state solution is $v_s = (y, 0)$ and $\vartheta_s = 1$, by using the resolvent estimation, Masmoudi et al.~\cite{Masmoudi2021} proved that if the initial perturbation satisfies
\begin{equation}\notag
\|u_{in}\|_{H^s} \ll \nu^{\frac{1}{2}}, \quad \| \theta_{in} \|_{H^1} + \left\| \langle \partial_x \rangle^{\frac{1}{2}} \theta_{in} \right\|_{H^1} \ll \nu^{\frac{11}{12}},
\end{equation}
then the system is asymptotically stable.
When the steady state solution is near-Couette shear flows, namely $v_s = (U(t,y), 0),~\vartheta_s = 1$, where $U(t,y) = y + \partial_y \int_{-1}^{1} G(y,y') W(t,y'){\rm d}y'$, Liang et al. \cite{Liang2025} recently studied the nonlinear enhanced dissipation effect and the inviscid damping effect. 
When the steady state solution is the Poiseuille flow, i.e. $v_s = (1-y^2, 0)$ and $\vartheta_s = 0$. By the resolvent estimate method, Wang \cite{Wang2025}  proved that if the initial velocity and initial temperature satisfies
\[
\| u_{in} - (1 - y^2, 0) \|_{H^{\frac{7}{2}+}} \ll \min\left\{ \mu, \nu \right\}^{\frac{2}{3}},\  \| \theta_{in} \|_{H^1} + \| |D_x|^{\frac{1}{8}} \theta_{in} \|_{H^1} \ll \min\left\{ \mu, \nu \right\}^{\frac{31}{24}},
\]
then the system is nonlinear stability. Actually, Wang \cite{Wang2025} only got the stability result but not the enhanced dissipation estimate on $\mathbb{T} \times [-1, 1]$.
For other results related to \eqref{1.1}, refer to \cite{Bianchini2025,Knobel}.

Based on the above observations, it is not difficult to find that considerable progress has been made in the research on the stability of NSB equations near the constant state or Couette flow, but the research on the stability of Poiseuille flow is very few, especially on the whole plane $\mathbb{T} \times \mathbb{R}$ or the half plane $\mathbb{T} \times [0, \infty)$. 
In the especial case where $\vartheta=0$,  \eqref{1.1} reduces to 2D incompressible Navier-Stokes system. 
By using  the semigroup estimate,  Coti Zelati et al. \cite{Coti2020} first gave the stability and enhanced dissipation estimate of 2D incompressible Navier-Stokes equations near the Poiseuille flow on $\mathbb{T} \times \mathbb{R}$ and the transition threshold $\alpha\leq \frac{3}{4}+$. 
 This result has been further improved to $\alpha\leq \frac{2}{3}+$ by Del Zotto \cite{Del2023}.
Also about Navier-Stokes equations, Ding et al. \cite{Ding2025}  investigated the case with Navier-slip boundary in a finite channel $\mathbb{T} \times [-1, 1]$, and proved that the transition threshold is $\alpha\leq \frac{2}{3}$ by the resolvent estimate method. 
For other results, see \cite{Arbon2025,BPM1,BPM2,BMV,BVW,Wang2025a,Chen2025,Chen2023}.
Inspired by the above results, in this paper, we will consider the transition threshold problem for the 2D NSB equations around the Poiseuille flow on three unbounded domains: the whole plane $\mathbb{T} \times \mathbb{R}$, the half plane $\mathbb{T} \times [0, \infty)$, and the infinite channel $\mathbb{T} \times [-1, 1]$. 
To be precise, we will introduce some energy functionals based on the hypocoercivity method \cite{Villani}, then by energy method, we can obatin the enhanced dissipation rate of the system.

{\bf Notations and conventions.}
We introduce some notation used throughout the paper.
For $a, b \in \mathbb{R}$, we denote
\begin{equation}\notag
 \langle a\rangle=\left(1+|a|^2\right)^{\frac{1}{2}}, \ \ \ |a, b|=|a|+|b|.
 \end{equation} 
We use the notation $a \lesssim b$  to indicate that  there exists a constant $C>0$ (independent of $\nu, \mu$) such that $a \leq C b$ (we define $a \gtrsim b$ analogously). Similarly, we use the notation $a \approx b$ when there exists $C>0$ such that $C^{-1} b \leq a \leq C b$.

For a function $f=f(t,x, y): \mathbb{T} \times \mathbb{R} \rightarrow \mathbb{R}$ and $k\in \mathbb{Z}$, we denote 
\begin{align}\notag
f_k(t,y)=\frac{1}{2\pi}\int_\T f(t,x,y)\e^{-ikx}\dd x,\ \ \ \ \ \ f_{\neq}=f-f_0.
\end{align}

For $D=\mathbb{R},[0, \infty)$, or $[-1,1]$, we use the notation $\|\cdot\|_{L^p}$ to denote the $L^p(D)$ norm, and $\|\cdot\|_{L^2}=\|\cdot\|$. The notation $\langle\cdot, \cdot\rangle$ will denote the inner product on $L^2(D)$.

\subsection{ Main results}
We firstly consider the case that  $(x,y)\in \mathbb{T} \times \mathbb{R}$ or $(x,y)\in\mathbb{T} \times [0, \infty)$.
\begin{thm}\label{thm1.1}
Suppose that $(\om_{in},\theta_{in})$ is the initial data for \eqref{1.3} with $D=\mathbb{R}$ or $[0,\infty)$.  For all $m>\fr12$, there exists a constant $\varepsilon_0>0$ independent of $\nu$ and $\mu$ such that if
\begin{equation}\label{1.4}
	\|u_{0,in}^1\|+\left\|y\de_y\om_{0,in}\right\|+\sum_{j=0}^1\left\|(\nu^{\fr14}\de_y)^j\langle\de_x\rangle^{m-\frac{j}{4}}\om_{in}\right\|+\nu^{-\fr14}\left\||\de_x|^{m+\frac{1}{4}}(y\om_{in},\nabla \phi_{in})\right\|\leq \varepsilon_0\min\{\nu^{\fr34},\mu^{\fr34},\nu^{-\fr14}\mu\},
\end{equation}
\begin{equation}\label{1.5}
	\left\|y\de_y\theta_{0,in}\right\|+\sum_{j=0}^1\left\|(\mu^{\fr14}\de_y)^j\langle\de_x\rangle^{m-\frac{j}{4}}\theta_{in}\right\|+\mu^{-\fr14}\left\||\de_x|^{m+\frac{1}{4}}y\theta_{in}\right\|\leq \varepsilon_0\min\{\nu^{\fr94},\mu^{\fr94}\},
\end{equation}
then for $0<\delta_0\ll 1$ (depending only on universal constants), $t\geq 0$ and $0<\nu,\mu\leq 1$,  the corresponding solution $(\omega,\theta)$ to \eqref{1.3} satisfies
\begin{equation}\label{1.6}
\sum_{j=0}^1\left\|(\nu^{\fr14}\de_y)^j\langle\de_x\rangle^{m-\frac{j}{4}}\om_{\neq}\right\|+\nu^{-\fr14}\left\||\de_x|^{m+\frac{1}{4}}(y\om_{\neq},\nabla \phi_{\neq})\right\|\lesssim \varepsilon_0{\rm e}^{-\delta_0\min\{\nu,\mu\}^{\frac12} t},	
\end{equation}
\begin{equation}\label{1.7}
	\sum_{j=0}^1\left\|(\mu^{\fr14}\de_y)^j\langle\de_x\rangle^{m-\frac{j}{4}}\theta_{\neq}\right\|+\mu^{-\fr14}\left\||\de_x|^{m+\frac{1}{4}}y\theta_{\neq}\right\|\lesssim \varepsilon_0{\rm e}^{-\delta_0\min\{\nu,\mu\}^{\frac12} t}.
\end{equation}
\end{thm}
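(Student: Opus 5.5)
The proof will be a bootstrap built on a hypocoercivity energy functional in the spirit of Villani and of Coti Zelati et al.\ for the Poiseuille flow, applied frequency by frequency in $x$. For each $k\neq 0$, the linearized operator $\de_t + iky^2 - \nu(\de_y^2-k^2)$ acting on $\om_k$ (and the analogous one with $\mu$ for $\theta_k$) admits a functional of the form
\begin{equation*}
\Phi_k[f]=\|f\|^2+\alpha\,\nu^{\fr12}|k|^{-\fr12}\|\de_y f\|^2+\beta\,\nu^{\fr14}|k|^{-\fr14}\mathrm{Re}\langle iky f,\de_y f\rangle\,|k|^{-1}+\gamma\,\nu^{-\fr12}|k|^{\fr12}\|yf\|^2 ,
\end{equation*}
with suitable small constants $\alpha,\beta,\gamma$, up to the exact powers fixed by the weights in \eqref{1.6}. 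The key commutator $[\de_y,iky^2]=2iky$ produces the coercive term $|k|^2\|yf\|^2$. Combined with $\nu\|\de_y f\|^2$ through the interpolation $\|f\|^2\lesssim\|yf\|\|\de_y f\|$, this yields $\ddt\Phi_k+c\,\nu^{\fr12}|k|^{\fr12}\Phi_k\le 0$. The weights $\nu^{\fr14}\de_y$ and $\nu^{-\fr14}|\de_x|^{\fr14}y$ in the statement are precisely the square roots of these pieces.

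First, I establish this linear estimate for the coupled system. The nonlocal term $-2\de_x\phi$ must be handled in $\om$. It is controlled via $\|k\phi_k\|\lesssim$ the elliptic norms, and it is absorbed because $\langle -2ik\phi_k,\om_k\rangle$ is antisymmetric-type and small relative to the dissipation after weighting. This is why $\nabla\phi_{\neq}$ appears in the norm. The buoyancy term $ik\theta_k$ is a forcing; I treat it with the $\theta$ functional, which decays at rate $\mu^{\fr12}|k|^{\fr12}$. Summing with weight $\langle k\rangle^{2m}$ and using the rate $\min\{\nu,\mu\}^{\fr12}$ for both gives the linear version of \eqref{1.6}--\eqref{1.7}. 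In the Duhamel/energy coupling, the forcing costs a factor, which explains why $\theta$ must be much smaller than $\om$. For the half-space, the boundary terms in integrations by parts vanish because $\om_k=\theta_k=0$ on $\de D$, and the cross term $\langle yf,\de_y f\rangle$ has a boundary contribution $y|f|^2=0$ there.

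Second, I control the zero modes. $\om_0$ and $\theta_0$ satisfy heat equations forced by $(u_{\neq}\cdot\nabla\om_{\neq})_0$ and $(u_{\neq}\cdot\nabla\theta_{\neq})_0$. I propagate $\|u^1_0\|$, $\|y\de_y\om_0\|$ and $\|y\de_y\theta_0\|$ (the quantities assumed in \eqref{1.4}, \eqref{1.5}) uniformly in time. These are needed because the transport term $u_0^1\de_x\om_{\neq}$ perturbs the shear $y^2$, and the hypocoercive commutator then sees $\de_y u_0^1$; bounding $\|y\de_y u^1_0\|_{L^\infty}$-type quantities lets it be absorbed. The zero-mode forcing is integrable in time thanks to the exponential decay of the nonzero modes.

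Third, I run the nonlinear bootstrap. Assume \eqref{1.6}--\eqref{1.7} hold with constant $2C$ on $[0,T]$ and estimate $\ddt\sum_k\langle k\rangle^{2m}\Phi_k$ with the nonlinear terms $u\cdot\nabla\om$ and $u\cdot\nabla\theta$, split into zero--nonzero and nonzero--nonzero interactions. Each term is bounded by (energy)$\times$(dissipation)$^{1/2}\times$(dissipation)$^{1/2}$ divided by powers of $\nu$, using $m>\fr12$ for the algebra property in $x$. The $\de_y$ derivative falling on $\om$ in the cross term $\langle y f,\de_y f\rangle$ costs $\nu^{-\fr14}$ extra. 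This yields smallness conditions of the type $\varepsilon\nu^{-\fr34}\ll1$ and $\varepsilon\nu^{\fr14}\mu^{-1}\ll1$, the latter from $\theta$ transported by $u$ with the $\mu$-dissipation. The buoyancy coupling $\de_x\theta$ in the $\om$ equation, weighted against the $\om$ functional with its $\nu^{-\fr12}$ weights, forces the $\theta$ threshold $\min\{\nu,\mu\}^{\fr94}$.

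I expect the main obstacle to be the cross terms of the hypocoercive functional under the nonlinearity and the nonlocal $\de_x\phi$ term. Bounding $\langle ky\,(u\cdot\nabla\om)_k,\de_y\om_k\rangle$ requires weighted estimates with $y$, and $y$ is unbounded on $\mathbb{R}$ and $[0,\infty)$. My first attempt is to commute $y$ onto $\om$ so that the $\|y\om\|$ dissipation piece absorbs it, and to bound $u$ in $L^\infty$ using $\nabla\phi_{\neq}$ and the zero-mode norms. This is exactly the step where the exponents $\nu^{\fr34}$ and $\nu^{-\fr14}\mu$ are determined.
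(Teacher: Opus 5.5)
Your architecture is the paper's. It uses frequency-localized hypocoercive functionals built from $\|f\|^2$, $\nu^{\fr12}|k|^{-\fr12}\|\de_yf\|^2$, a $y$--$\de_y$ cross term and $\nu^{-\fr12}|k|^{\fr12}\|yf\|^2$. It uses zero-mode energies carrying $\|u^1_0\|$, $\|y\de_y\om_0\|$, $\|y\de_y\theta_0\|$, trilinear bounds with $m>\fr12$ and a splitting of $(k,l)$, and a bootstrap.

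The genuine gap is the linear term $-2\de_x\phi$, which you propose to absorb as ``small relative to the dissipation after weighting.'' That works only for part of the functional. In the $\|\om_k\|^2$ and $\|\de_y\om_k\|^2$ balances its real part vanishes; for the latter with a boundary one uses $\de_{yy}\phi_k=\om_k+k^2\phi_k=0$ on $\de D$. In the cross term it even helps, producing $-4|k|^2\|\de_y\phi_k\|^2$. In the $y$-weighted balance, however, it contributes
\[
-4\,\RR\l y\de_y\phi_k,ik\phi_k\r
\]
to $\frac12\ddt\|y\om_k\|^2$ (see \eqref{4.49}). After the weight $\nu^{-\fr12}|k|^{\fr12}$ this is of size $\nu^{-\fr12}|k|^{\fr32}\|y\de_y\phi_k\|\|\phi_k\|$. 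Already at $|k|=1$ it can exceed the available dissipation $|k|(\|y\om_k\|^2+\|\nabla_k\phi_k\|^2)$ by a factor of order $\nu^{-\fr12}$, so it cannot be absorbed uniformly in $\nu$.

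Merely putting $\|\nabla_k\phi_k\|^2$ into the norm does not cure this, because the shear acts there too. Since $-\RR\l iky^2\Delta_k\phi_k,\phi_k\r=2\RR\l iky\de_y\phi_k,\phi_k\r$, the balance for $\frac12\ddt\|\nabla_k\phi_k\|^2$ carries $+2\RR\l y\de_y\phi_k,ik\phi_k\r$ (see \eqref{4.50}). The missing idea is an exact cancellation. The paper places $\|y\om_k\|^2+2\|\nabla_k\phi_k\|^2$ in $E_\om[\om_k]$ with coefficients in exactly the ratio $1$ to $2$. Then, by \eqref{4.49}--\eqref{4.50}, the $\phi$-terms cancel identically, and the source $+\nu\|\om_k\|^2$ in \eqref{4.49} becomes damping. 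The dissipation $|k|\|\nabla_k\phi_k\|^2$ that the new piece requires comes from the cross-term gain $4|k|^2\|\de_y\phi_k\|^2$ combined with \eqref{4.59}.

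A secondary correction concerns the cross-term coefficient. It must be of order $|k|^{-1}$ with no power of $\nu$, as in \eqref{2.6}--\eqref{2.7}, not $\beta\nu^{\fr14}|k|^{-\fr54}$. Write the cross term as $2b\,\RR\l ikyf,\de_yf\r$. The $\alpha\nu^{\fr12}|k|^{-\fr12}\|\de_yf\|^2$ piece generates a term of size $\alpha\nu^{\fr12}|k|^{\fr12}\|yf\|\|\de_yf\|$. Absorbing it into $\nu\|\de_yf\|^2$ and the cross-term output (of size $b|k|^2\|yf\|^2$) requires $b\gtrsim\alpha^2|k|^{-1}$. Positivity of $\Phi_k$, and control of the viscous error of the cross term, require $b\lesssim(\alpha\gamma)^{\fr12}|k|^{-1}$. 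With your scaling the first condition fails as $\nu\to0$.
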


For the case of the finite channel, since the Poincar\'e inequality can be applied, we can also obtain estimates of the solution at zero frequency. The specific statement of the theorem is as follows:
\begin{thm}\label{thm1.2}
Suppose that $(\om_{in},\theta_{in})$ is the initial data for \eqref{1.3} with $D=[-1,1]$.  For all $m>\fr12$, there exists a constant $\varepsilon_0>0$ independent of $\nu$ and $\mu$ such that if
\begin{equation}\label{1.8}
	\|u_{0,in}^1\|+\left\|y\de_y\om_{0,in}\right\|+\sum_{j=0}^1\left\|(\nu^{\fr14}\de_y)^j\langle\de_x\rangle^{m-\frac{j}{4}}\om_{in}\right\|+\nu^{-\fr14}\left\||\de_x|^{m+\frac{1}{4}}(y\om_{in},\nabla \phi_{in})\right\|\leq \varepsilon_0\min\{\nu^{\fr34},\mu^{\fr34},\nu^{-\fr14}\mu\},
\end{equation}
\begin{equation}\label{1.9}
	\left\|y\de_y\theta_{0,in}\right\|+\sum_{j=0}^1\left\|(\mu^{\fr14}\de_y)^j\langle\de_x\rangle^{m-\frac{j}{4}}\theta_{in}\right\|+\mu^{-\fr14}\left\||\de_x|^{m+\frac{1}{4}}y\theta_{in}\right\|\leq \varepsilon_0\min\{\nu^{\fr94},\mu^{\fr94}\},
\end{equation}
then for $0<\delta_0\ll 1$  (depending only on universal constants), $t\geq 0$ and $0<\nu,\mu\leq 1$,  the corresponding solution $(\omega,\theta)$ to \eqref{1.3} satisfies
\begin{equation}\label{1.10}
\sum_{j=0}^1\left\|(\nu^{\fr14}\de_y)^j\om_{0}\right\|+\left\|y\de_y\om_{0}\right\|\lesssim \varepsilon_0{\rm e}^{-\delta_0\min\{\nu,\mu\} t},	
\end{equation}
\begin{equation}\label{1.11}
	\sum_{j=0}^1\left\|(\mu^{\fr14}\de_y)^j\theta_{0}\right\|+\left\|y\de_y\theta_{0}\right\|\lesssim \varepsilon_0{\rm e}^{-\delta_0\min\{\nu,\mu\} t},
\end{equation}
\begin{equation}\label{1.12}
\sum_{j=0}^1\left\|(\nu^{\fr14}\de_y)^j\langle\de_x\rangle^{m-\frac{j}{4}}\om_{\neq}\right\|+\nu^{-\fr14}\left\||\de_x|^{m+\frac{1}{4}}(y\om_{\neq},\nabla \phi_{\neq})\right\|\lesssim \varepsilon_0{\rm e}^{-\delta_0\min\{\nu,\mu\}^{\frac12} t},	
\end{equation}
\begin{equation}\label{1.13}
	\sum_{j=0}^1\left\|(\mu^{\fr14}\de_y)^j\langle\de_x\rangle^{m-\frac{j}{4}}\theta_{\neq}\right\|+\mu^{-\fr14}\left\||\de_x|^{m+\frac{1}{4}}y\theta_{\neq}\right\|\lesssim \varepsilon_0{\rm e}^{-\delta_0\min\{\nu,\mu\}^{\frac12} t}.
\end{equation}
\end{thm}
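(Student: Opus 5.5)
We prove Theorem \ref{thm1.2} by a bootstrap built on the same hypocoercive energy functionals that give Theorem \ref{thm1.1}. The only new ingredient is the zero-mode estimates \eqref{1.10}--\eqref{1.11}, which use the Poincar\'e inequality on $[-1,1]$. Throughout write $\lambda=\min\{\nu,\mu\}$.

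\textbf{Nonzero modes.} For each $k\neq 0$ we use the Villani-type functional
\[
E_k[\omega]=\|\omega_k\|^2+\alpha\nu^{\fr12}|k|^{-\fr12}\|\de_y\omega_k\|^2+\beta\nu^{\fr14}|k|^{\fr12}\,\mathrm{Re}\langle i\,y\,\omega_k,\de_y\omega_k\rangle+\gamma|k|^{\fr12}\nu^{-\fr12}\big(\|y\omega_k\|^2+\|\nabla_k\phi_k\|^2\big),
\]
with small constants $\alpha,\beta,\gamma$; the analogous functional $E_k[\theta]$ uses $\mu$ in place of $\nu$. We then sum $\langle k\rangle^{2m}$-weighted. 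These weights match the norms in \eqref{1.12}--\eqref{1.13}.

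The cross term $\langle iy\omega_k,\de_y\omega_k\rangle$ produces $|k|\,\|y\omega_k\|^2$ from the commutator $[\de_y,y^2]=2y$. It is interpolated with $\nu\|\de_y\omega_k\|^2$ to give the rate $\nu^{\fr12}|k|^{\fr12}$. The nonlocal term $2ik\phi_k$ is handled by the $\|\nabla\phi_k\|$ and $\|y\omega_k\|$ components, using $\langle y^2 ik\omega_k,\phi_k\rangle$-type identities as in Coti Zelati et al. \cite{Coti2020}.

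On $[-1,1]$ the boundary conditions $\omega=\theta=0$ and $\phi=0$ kill every boundary term arising from integrating by parts. The exception is $\de_y\omega_k\,\de_y\bar\omega_k$ in the $\|\de_y\omega_k\|^2$ evolution. For that piece we instead estimate $\|\de_y\omega_k\|$ directly from the equation, or use that $\de_y^2\omega_k|_{\de D}$ is determined by the equation, as the paper does in the unbounded half-plane case.

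The buoyancy coupling $ik\theta_k$ in the $\omega$ equation is bounded by $\nu^{-\fr12}\ldots\|\theta_k\|$ terms. This coupling is why the $\theta$ threshold is the much smaller $\lambda^{\fr94}$, which absorbs the loss of $\nu^{-\fr32}$ from integrating over the dissipation time scale $\lambda^{-\fr12}$.

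\textbf{Zero modes.} The modes $\omega_0,\theta_0$ satisfy heat equations with Dirichlet data, $\de_t\omega_0-\nu\de_y^2\omega_0=-(u\cdot\nabla\omega)_0$, and similarly for $\theta_0$. There is no buoyancy forcing, since $(\de_x\theta)_0=0$. Energy estimates for $\|\omega_0\|$, $\nu^{\fr12}\|\de_y\omega_0\|^2$ and $\|y\de_y\omega_0\|^2$, together with Poincar\'e $\|\omega_0\|\le C\|\de_y\omega_0\|$ on $[-1,1]$, give $\ddt F_0+c\nu F_0\le$ nonlinear terms. Here the $y\de_y$ weight has commutator $[\nu\de_y^2,y\de_y]=2\nu\de_y^2$, which is controlled.

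The nonlinearity $(u_{\neq}\cdot\nabla\omega_{\neq})_0$ is quadratic in nonzero modes. It therefore decays like $e^{-2\delta_0\lambda^{\fr12}t}$, which is faster than $e^{-\delta_0\lambda t}$. By Duhamel it is integrable against the slower rate with a gain.

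\textbf{Bootstrap and main obstacle.} We assume \eqref{1.10}--\eqref{1.13} with constant $2C$ on $[0,T]$ and improve them to $C$. The key nonlinear terms are $u_0^1\de_x\omega_{\neq}$ and $u_{\neq}^2\de_y\omega_0$ in the nonzero equations. In $\mathbb{T}\times[-1,1]$, $u_0^1$ is bounded by $\|u^1_{0,in}\|$ plus a quantity controlled by the zero-mode energy.

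The main obstacle is the term $u_{\neq}\cdot\nabla\omega_{\neq}$ paired with the $\nu^{-\fr12}\|y\omega_k\|^2$ and cross terms. There the $\nu^{-\fr14}$ weights and the $\langle\de_x\rangle^{m}$ algebra ($m>\fr12$) must yield a bound $\lesssim \nu^{-\fr34}\varepsilon_0\lambda^{\fr34}\cdot$(dissipation). This is exactly what fixes the threshold $\min\{\nu^{\fr34},\mu^{\fr34},\nu^{-\fr14}\mu\}$, and the $\nu^{-\fr14}\mu$ entry comes from the $\theta$ transport terms.

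For this step we would first try Gagliardo--Nirenberg in $y$, $\|f\|_{L^\infty_y}\lesssim\|f\|^{\fr12}\|\de_y f\|^{\fr12}$, combined with the anisotropic weights. With that choice, each loss of $\nu^{-\fr14}$ from $\de_y$ is compensated by $|k|^{\fr14}$ from the $\langle\de_x\rangle^{m-\fr14}$ weight.
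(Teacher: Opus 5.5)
\textbf{Overall comparison.} Your architecture is the paper's. For each $k\neq0$ you use a hypocoercive functional with a $\nu^{-\fr12}|k|^{\fr12}$-weighted block in $\|y\omega_k\|^2$ and $\|\nabla_k\phi_k\|^2$ to neutralize $2ik\phi_k$, summed against $|k|^{2m}{\rm e}^{2\delta_0\kappa^{\fr12}t}$. For the zero modes you use energies in $\|\omega_0\|^2$, $\nu^{\fr12}\|\de_y\omega_0\|^2$, $\|y\de_y\omega_0\|^2$ (and the $\theta_0$ analogue), where Poincar\'e on $[-1,1]$ allows the weight ${\rm e}^{2\delta_0\kappa t}$. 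You bound $u^1_0$ in $L^2$ and use $\|u^1_0\|_{L^\infty}\lesssim\|u^1_0\|^{\fr12}\|\omega_0\|^{\fr12}$. Finally, a bootstrap in which buoyancy enters linearly, as $(\nu^{-\fr12}\mu^{-\fr56}+\nu^{-1}\mu^{-\fr12})\mathcal{D}_{\theta,\neq}$, forces the small $\theta$ threshold. The gap is in the one concrete object you commit to: the nonzero-mode functional as written would not close.

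\textbf{First problem: the cross-term weight.} Write the functional as $a\|\de_y\omega_k\|^2+b\,\RR\l iy\omega_k,\de_y\omega_k\r+c\|y\omega_k\|^2$, with $a=\alpha\nu^{\fr12}|k|^{-\fr12}$ and $c=\gamma\nu^{-\fr12}|k|^{\fr12}$. Coercivity requires $b^2\lesssim ac=\alpha\gamma$. The commutator term $-2a\RR\l\de_y\omega_k,iky\omega_k\r$ has size $\alpha\nu^{\fr12}|k|^{\fr12}\|y\omega_k\|\|\de_y\omega_k\|$. It is absorbed by $\nu\|\de_y\omega_k\|^2$ plus the $2b|k|\|y\omega_k\|^2$ that the cross term generates only if $b\gtrsim\alpha^2$. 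With your $b=\beta\nu^{\fr14}|k|^{\fr12}$, the first condition fails for $|k|\gg\nu^{-\fr12}$ and the second fails for $|k|\ll\nu^{-\fr12}$. In addition, the transport part of $\ddt\RR\l iy\omega_k,\de_y\omega_k\r$ is $-2k\|y\omega_k\|^2$, which is anti-dissipative for $k<0$ unless a factor $\mathrm{sgn}\,k$ is built in. The weight must be of order one with the sign included, i.e. $\beta|k|^{-1}\RR\l iky\omega_k,\de_y\omega_k\r$ as in \eqref{2.6}. That is also what your own sentence about producing $|k|\,\|y\omega_k\|^2$ presupposes.

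\textbf{Second problem: the relative weight in the $\phi$-block.} By \eqref{4.49}, $\fr12\ddt\|y\omega_k\|^2$ contains $\nu\|\omega_k\|^2-4\RR\l y\de_y\phi_k,ik\phi_k\r$. By \eqref{4.50}, $\fr12\ddt\|\nabla_k\phi_k\|^2$ contains $-\nu\|\omega_k\|^2+2\RR\l y\de_y\phi_k,ik\phi_k\r$. Only the combination $\|y\omega_k\|^2+2\|\nabla_k\phi_k\|^2$ cancels the nonlocal term, and it also turns the bad $+\nu\|\omega_k\|^2$ into dissipation. With your $1{:}1$ weighting a term $2\gamma\nu^{-\fr12}|k|^{\fr12}\RR\l y\de_y\phi_k,ik\phi_k\r$ survives. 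It can only be bounded by a multiple of $\gamma\nu^{-\fr12}|k|^{\fr12}\|\nabla_k\phi_k\|^2$. The cross term supplies only $\beta|k|\,\|\nabla_k\phi_k\|^2$ (via $4|k|^2\|\de_y\phi_k\|^2$ and \eqref{4.59}), and this is insufficient for $|k|\lesssim(\gamma/\beta)^2\nu^{-1}$, i.e. at low frequencies as $\nu\to0$.

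\textbf{A minor point on boundary terms.} The boundary exception you anticipate in the $\|\de_y\omega_k\|^2$ balance does not occur. Every boundary term there carries a factor that vanishes at $y=\pm1$, because $\omega_k=\theta_k=\phi_k=0$ there, and this includes $\de_{yy}\phi_k=\omega_k+k^2\phi_k$. The equation supplies boundary data only for the zero modes: $\de_{yy}\omega_0=\de_{yy}\theta_0=0$ on $\de D$ removes the boundary terms in the $\|y\de_y\omega_0\|^2$ and $\|y\de_y\theta_0\|^2$ balances. The leftover term $-\nu[y|\de_y\omega_0|^2]_{-1}^{1}$ has a favorable sign.
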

\begin{rem}
 According to the results about Navier-Stokes equations \cite{Coti2020,Del2023}, the authors used energy theory and semigroup estimation to get the transition threshold. 
  Because of the coupling effect of this system, it seems difficult to use semigroup estimation like Navier-Stokes equations.
  In this paper, we use a new energy estimate to study the enhanced dissipation of NSB equations, which will provide some reference for studying the stability of Poiseuille flow for other coupled models. 
\end{rem}
\begin{rem}
     Our main results, the Theorems \ref{thm1.1} and \ref{thm1.2}, give the enhanced dissipation with decay rate ${\rm e}^{-\delta_0\min\{\nu,\mu\}^{\frac12} t}$,  which is consistent with the Navier-Stokes equations \cite{Del2023,Ding2025}. However, the transition threshold may not be sharp.  It is very interesting and challenging to study the optimal transition threshold of
 the Poiseuille flow in the future.
\end{rem}
\begin{rem}
     If $v$ is non-slip boundary conditions, the boundary layers will necessitate a more complicated approach that cannot be done only with energy estimates, we will continue to consider it in the future.
\end{rem}

\section{Outline}

Due to the coupling effect of vorticity and temperature, the first challenge in proving Theorems \ref{thm1.1} and \ref{thm1.2} is to obtain sufficiently good estimates for the linearized problem of the system. 
Secondly, for the nonlinear terms, we will handle them through the smallness of the initial values and the dissipation obtained from the analysis of the linearized system. One of the main contributions of this paper is to provide a more direct method based on energy estimation.
In the following, we will provide a detailed explanation of the proof process of the Theorems \ref{thm1.1} and \ref{thm1.2}. 
First, by performing Fourier transform on the system \eqref{1.3} with respect to $x$,  we get the zero mode \eqref{zero} and nonzero mode equations \eqref{4.1} respectively.
Next, we set $\kappa:=\min\{\nu,\mu\}$, and define the following functionals.
When $k=0$, 
\begin{equation}\label{2.2}
	\mathcal{E}_{\omega,0}:=\fr14\left[2\|\omega_0\|^2+\|y\de_y\omega_0\|^2+2\nu^{\fr12}\|\de_y\omega_0\|^2\right],
	\end{equation}
\begin{equation}\label{2.3}
	\mathcal{E}_{\theta,0}:=\fr14\left[2\|\theta_0\|^2+\|y\de_y\theta_0\|^2+2\mu^{\fr12}\|\de_y\theta_0\|^2\right],
	\end{equation}
\begin{equation}\label{2.4}
	\mathcal{D}_{\omega,0}:=\nu\|\de_y\omega_0\|^2+\nu\|y\de_{yy}\omega_0\|^2+\nu^{\fr32}\|\de_{yy}\omega_0\|^2+\|\de_y u_0\|^2,
	\end{equation}
\begin{equation}\label{2.5}
	\mathcal{D}_{\theta,0}:=\mu\|\de_y\theta_0\|^2+\mu\|y\de_{yy}\theta_0\|^2+\mu^{\fr32}\|\de_{yy}\theta_0\|^2.
	\end{equation}
When $k\neq 0$, we define
\begin{align}\label{2.6}
E_{\omega}[\omega_k]:=\frac12&\left[\|\omega_k\|^2 +\nu^{\frac12}|k|^{-\frac12} \|\de_y\omega_k\|^2+5|k|^{-1} \RR\l iky\omega_k,\de_y\omega_k\r \right. \n\\
		& \left.+100\nu^{-\frac12}|k|^{\frac12}\left(\|y\omega_k\|^2+ 2 \|\nabla_k\phi_k\|^2\right) \right],	
\end{align}
\begin{align}\label{2.7}
	E_{\theta}[\theta_k]:=\frac12&\left[\|\theta_k\|^2 +2^{-7}\mu^{\frac12}|k|^{-\frac12} \|\de_y\theta_k\|^2+2^{-10}|k|^{-1} \RR\l iky\theta_k,\de_y\theta_k\r \right. \n\\
		& \left.+2^{-9}\mu^{-\frac12}|k|^{\frac12}\|y\theta_k\|^2\right],
		\end{align}
		\begin{align}\label{2.8}
	D_{\omega}[\omega_k]:=&\nu^{\frac12}|k|^{\frac12}\norm{\omega_k}^2 +\nu\|\nabla_k\omega_k\|^2+\nu^{\frac32}|k|^{-\frac12}\|\de_y\nabla_k\omega_k\|^2+|k|\| y\omega_k\|^2\n\\ 
&+|k|\|\nabla_k\phi_k\|^2+\nu^{\frac12}|k|^{\frac12}\|y\nabla_k\omega_k\|^2 ,		
		\end{align}
	\begin{align}\label{2.9}
		D_{\theta}[\theta_k]:=&\mu^{\frac12}|k|^{\frac12}\norm{\theta_k}^2+\mu\|\nabla_k\theta_k\|^2+\mu^{\frac32}|k|^{-\frac12}\|\de_y\nabla_k\theta_k\|^2+|k|\| y\theta_k\|^2\n\\ 
		&+\mu^{\frac12}|k|^{\frac12}\|y\nabla_k\theta_k\|^2,
	\end{align}
	\begin{equation}\label{2.10}
	\mathcal{E}_{\om,\neq}:={\rm e}^{2\delta_0\kappa^{\frac{1}{2}} t}\sum_{k\in \Z\backslash\{0\}}|k|^{2m}E_{\om}[\om_k],
		\end{equation}
	\begin{equation}\label{2.11}
	\mathcal{E}_{\theta,\neq}:={\rm e}^{2\delta_0\kappa^{\frac{1}{2}} t}\sum_{k\in \Z\backslash\{0\}}|k|^{2m}E_{\theta}[\theta_k],
		\end{equation}
\begin{equation}\label{2.12}
	\mathcal{D}_{\om,\neq}:={\rm e}^{2\delta_0\kappa^{\frac{1}{2}} t}\sum_{k\in \Z\backslash\{0\}}|k|^{2m}D_{\om}[\om_k],
		\end{equation}
	\begin{equation}\label{2.13}
	\mathcal{D}_{\theta,\neq}:={\rm e}^{2\delta_0\kappa^{\frac{1}{2}} t}\sum_{k\in \Z\backslash\{0\}}|k|^{2m}D_{\theta}[\theta_k].
		\end{equation}

With the above energy functionals in hand, we are ready to prove the following two results. 
The first result is the estimation of zero mode, which will be proved in Section \ref{s3}.

\begin{prop}\label{p1}
Under the conditions of Theorem \ref{thm1.1}, we have
	\begin{equation}\label{2.13a}
	\ddt\|u_0^1\|^2+\nu\|\de_yu_0^1\|^2\lesssim	\mathcal{E}_{\omega,0}^{\fr12}\mathcal{D}_{\omega,\neq},
	\end{equation}
	\begin{equation}\label{2.14}
\ddt\mathcal{E}_{\omega,0}+\fr12\mathcal{D}_{\omega,0}\lesssim \nu^{-\fr12}\mathcal{E}_{\omega,\neq}^{\fr12}\mathcal{D}_{\omega,0}^{\fr12}\mathcal{D}_{\omega,\neq}^{\fr12},		
	\end{equation}
\begin{equation}\label{2.15}
\ddt\mathcal{E}_{\theta,0}+\frac{1}{2}\mathcal{D}_{\theta,0}\lesssim (\mu^{-\fr12}+\nu^{-\fr14}\mu^{-\fr14})\mathcal{E}_{\theta,\neq}^{\fr12}\mathcal{D}_{\theta,0}^{\fr12}\mathcal{D}_{\omega,\neq}^{\fr12}+\nu^{\fr14}\mu^{-\fr34}\mathcal{E}_{\om,\neq}^{\fr12}\mathcal{D}_{\theta,0}^{\fr12}\mathcal{D}_{\theta,\neq}^{\fr12}.		
	\end{equation}
\end{prop}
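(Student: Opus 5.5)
We prove Proposition \ref{p1} by plain energy estimates on the zero-mode system, obtained by averaging \eqref{1.3} in $x$. Since $\de_x$ kills the zero mode, the transport terms $y^2\de_x\om$, $2\de_x\phi$ and $\de_x\theta$ drop out. In divergence form we get
\[
\de_t\om_0-\nu\de_{yy}\om_0=-\de_y\big(u^2_{\neq}\om_{\neq}\big)_0,\qquad \de_t\theta_0-\mu\de_{yy}\theta_0=-\de_y\big(u^2_{\neq}\theta_{\neq}\big)_0,
\]
using $u^2_0=0$ (from $\de_y u^2_0=-\de_x u^1_0=0$ and the boundary/decay condition). We also have $\de_t u^1_0-\nu\de_{yy}u^1_0=-(u^2_{\neq}\om_{\neq})_0$, since $\om_0=-\de_y u^1_0$.

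For \eqref{2.13a} we test the $u^1_0$ equation with $u^1_0$. Boundary terms vanish because $\de_y u^1_0=-\om_0=0$ on $\de D$. The forcing contributes $|\l(u^2_{\neq}\om_{\neq})_0,u^1_0\r|\le \|u^1_0\|_{L^\infty}\sum_k\|u^2_k\|\|\om_{-k}\|$. Here I would bound $\|u^1_0\|_{L^\infty}$ by a Gagliardo–Nirenberg interpolation against $\|\de_y u^1_0\|=\|\om_0\|\lesssim \mathcal{E}_{\om,0}^{1/2}$, or alternatively integrate by parts to move $\de_y$. The product $\|u^2_k\|\|\om_k\|\le |k|\|\nabla_k\phi_k\|\,\|\om_k\|$ is then controlled by $|k|^{-1/2}(|k|\|\nabla_k\phi_k\|^2)^{1/2}(\nu^{1/2}|k|^{1/2}\|\om_k\|^2)^{1/2}$-type combinations from $D_\om$. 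The weight $|k|^{2m}$ with $m>\frac12$ makes the $k$-sum summable, which gives $\lesssim \mathcal{E}_{\om,0}^{1/2}\mathcal{D}_{\om,\neq}$.

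For \eqref{2.14} and \eqref{2.15} we test the $\om_0$ equation against $\om_0$, against $\nu^{1/2}(-\de_{yy}\om_0)$, and against $-\de_y(y^2\de_y\om_0)$ (the weighted piece). The analogous test functions are used for $\theta_0$.
\begin{itemize}
\item The linear terms give the dissipation $\nu\|\de_y\om_0\|^2$, $\nu^{3/2}\|\de_{yy}\om_0\|^2$ and $\nu\|y\de_{yy}\om_0\|^2$.
\item The cross term arising from the weight, $\nu\l \de_{yy}\om_0, 2y\de_y\om_0\r$, is absorbed by Cauchy–Schwarz into $\frac12\nu\|y\de_{yy}\om_0\|^2+C\nu\|\de_y\om_0\|^2$. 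This is why $\mathcal{E}_{\om,0}$ carries the factor $2$ on $\|\om_0\|^2$ while the $\|y\de_y\om_0\|^2$ term has factor $1$.
\item The term $\|\de_y u_0\|^2$ in $\mathcal{D}_{\om,0}$ appears on the left because $\|\de_y u_0\|=\|\om_0\|$; we use this loosely and let it come along with the others.
\end{itemize}

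For the nonlinear terms, integrating by parts once gives pairings such as $\l (u^2_{\neq}\om_{\neq})_0,\de_y\om_0\r$, $\nu^{1/2}\l (u^2\om)_0,\de_{yyy}\om_0\r$ (rewritten as $\nu^{1/2}\l\de_y(u^2\om)_0,\de_{yy}\om_0\r$), and $\l (u^2\om)_0,\de_y(y^2\de_y\om_0)\r$. Each is bounded by one factor of $\mathcal{D}_{\om,0}^{1/2}$, for example $\nu^{-1/2}\cdot\nu^{1/2}\|\de_y\om_0\|$ or $\nu^{-1/2}(\nu\|y\de_{yy}\om_0\|^2)^{1/2}$. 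The quadratic nonzero-mode factor is estimated as $\sum_k\|u^2_k\|_{L^\infty}\|\om_k\|$ or $\|u^2_k\|_{L^\infty}\|y\nabla_k\om_k\|$. Here $\|u^2_k\|_{L^\infty}\lesssim\|\nabla_k\phi_k\|$-type bounds give one $\mathcal{E}^{1/2}_{\om,\neq}$ factor (using the $\nu^{-1/2}|k|^{1/2}\|\nabla_k\phi_k\|^2$ part of $E_\om$), and the other factor comes from $\mathcal{D}_{\om,\neq}$. Tracking the $\nu$ powers this way produces the prefactor $\nu^{-1/2}$ in \eqref{2.14}.

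For $\theta_0$ the same structure applies, with two ways to pair the product $u^2_{\neq}\theta_{\neq}$:
\begin{itemize}
\item put $u^2$ in $\mathcal{D}_{\om,\neq}$ (via $|k|\|\nabla_k\phi_k\|^2$) and $\theta$ in $\mathcal{E}_{\theta,\neq}$, which yields the coefficients $\mu^{-1/2}$ and $\nu^{-1/4}\mu^{-1/4}$ depending on whether the $\theta_0$ dissipation factor uses $\mu$ or $\mu^{1/2}$ weights;
\item put $u^2$ in $\mathcal{E}_{\om,\neq}$ (gaining $\nu^{1/4}$ from the $\nu^{-1/2}$ weight) and $\theta$ in $\mathcal{D}_{\theta,\neq}$, which yields $\nu^{1/4}\mu^{-3/4}$.
\end{itemize}

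The main obstacle is the bookkeeping in the weighted ($y\de_y$) estimate on unbounded $D$. There the nonlinear term carries a factor $y$, which must be placed on the nonzero mode via $\|y\om_k\|$ or $\|y\nabla_k\om_k\|$, both of which appear in $E_\om$ and $D_\om$. The exponents must then be matched exactly to obtain the stated $\nu,\mu$ powers; I would first try to always put $y$ on $\om_{\neq}$ or $\theta_{\neq}$ and $L^\infty$ on $u^2_{\neq}$.
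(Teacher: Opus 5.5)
\textbf{Overall.} Your plan for \eqref{2.14}--\eqref{2.15} follows the paper. It uses the same three multipliers and the divergence form $(u\cdot\nabla f)_0=\partial_y(u^2_{\neq}f_{\neq})_0$ to put one derivative on the zero mode. It takes $L^\infty$ of the velocity factor via Gagliardo--Nirenberg, splits the $y^2$ weight between $\|y\partial_{yy}f_0\|$ and the nonzero mode, and uses the same two pairings for $\theta$ that produce $\mu^{-1/2}+\nu^{-1/4}\mu^{-1/4}$ and $\nu^{1/4}\mu^{-3/4}$. The genuine gap is in \eqref{2.13a}.

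\textbf{The gap in \eqref{2.13a}.} Your main route, $|\langle (u^2\omega)_0,u^1_0\rangle|\le \|u^1_0\|_{L^\infty}\sum_k\|u^2_k\|\|\omega_{-k}\|$, cannot give $\mathcal{E}_{\omega,0}^{1/2}\mathcal{D}_{\omega,\neq}$, for two reasons.
\begin{enumerate}
\item $\mathcal{E}_{\omega,0}$ controls $\|\partial_y u^1_0\|=\|\omega_0\|$ but not $\|u^1_0\|_{L^\infty}$ on an unbounded $D$. Gagliardo--Nirenberg gives $\|u^1_0\|^{1/2}\|\omega_0\|^{1/2}$, and $\|u^1_0\|$ is the very quantity being estimated. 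This is why $\|u^1_0\|_{L^\infty}$ enters Proposition \ref{p2} as a separate factor, handled through \eqref{2.20}.
\item $D_\omega$ has no $\nu$-free control of $\|\omega_k\|$; it contains only $\nu^{1/2}|k|^{1/2}\|\omega_k\|^2$ and $\nu\|\nabla_k\omega_k\|^2$. So $\sum_k\|u^2_k\|\|\omega_{-k}\|$ can only be bounded by $\nu^{-1/4}\mathcal{D}_{\omega,\neq}$. Your displayed combination equals $\nu^{1/4}|k|^{1/4}\|\nabla_k\phi_k\|\|\omega_k\|$, not $|k|\|\nabla_k\phi_k\|\|\omega_k\|$.
\end{enumerate}
The integration by parts you mention in passing is the real argument, but it only becomes available after rewriting the forcing as a derivative. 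Since $\partial_y u^2_k=-iku^1_k$, and $\sum_k k\,u^1_ku^1_{-k}=\sum_k k\,u^2_ku^2_{-k}=0$ by $k\leftrightarrow -k$, one has $(u^2\omega)_0=-\partial_y(u^1_{\neq}u^2_{\neq})_0$. Integrating by parts (no boundary term, since $u^2=0$ on $\partial D$) bounds the forcing by $\|\omega_0\|\sum_{k\neq0}|k|\|\phi_k\|_{L^\infty}\|\partial_y\phi_{-k}\|$. Both factors are now velocity components, controlled by the $\nu$-free term $|k|^{2m+1}\|\nabla_k\phi_k\|^2$ of $\mathcal{D}_{\omega,\neq}$. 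This is exactly the paper's proof.

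\textbf{The weighted cross term.} It cannot be absorbed by Cauchy--Schwarz. With your split, $\frac12\nu\|y\partial_{yy}\omega_0\|^2+2\nu\|\partial_y\omega_0\|^2$ times the weight $\frac12$ returns $+\nu\|\partial_y\omega_0\|^2$. That cancels the entire $-\nu\|\partial_y\omega_0\|^2$ from the $L^2$ level, and any other split gives a constant below $\frac12$. The term is an exact signed identity,
\[
-2\nu\langle y\partial_y\omega_0,\partial_{yy}\omega_0\rangle=\nu\|\partial_y\omega_0\|^2,
\]
with no boundary term because the factor $y$ vanishes at $y=0$. With the weights $\frac12,\frac14$ it costs exactly half of the $L^2$-level dissipation. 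That is where $\frac12\mathcal{D}_{\omega,0}$ and the constants $2,1$ come from.

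\textbf{The $\|\partial_y u_0\|^2$ term.} You cannot ``let $\|\partial_y u_0\|^2$ come along.'' No identity produces $\|\omega_0\|^2$ with an $O(1)$ coefficient, and \eqref{2.14} with that term is false. Take $x$-independent data on $D=\mathbb{R}$: the right side of \eqref{2.14} vanishes and $\omega_0$ solves the heat equation. Then $\int_0^\infty\|\omega_0\|^2\,dt=\frac{1}{2\nu}\|u^1_{0,in}\|^2$, which contradicts $\frac12\int_0^\infty\mathcal{D}_{\omega,0}\,dt\le\mathcal{E}_{\omega,0}(0)$ for small $\nu$. The paper's proof does not generate this term either. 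Prove \eqref{2.14} with only the three $\nu$-weighted terms in $\mathcal{D}_{\omega,0}$, which is all the later estimates use.

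\textbf{A minor slip.} At the weighted level, keep the pairing as $\langle\partial_y(u^2\omega)_0,\,y^2\partial_{yy}\omega_0+2y\partial_y\omega_0\rangle$. The version you list has dropped a derivative, and moving it the other way would produce the uncontrolled $y^2\partial_y^3\omega_0$.
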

The following result is the estimate of non-zero mode, which will be proved in Section \ref{s4}.
\begin{prop}\label{p2}
Under the conditions of Theorem \ref{thm1.1}, there holds
\begin{align}\label{2.16}
\ddt \mathcal{E}_{\omega,\neq} +\fr14\mathcal{D}_{\omega,\neq}
\lesssim&\nu^{-\frac{1}{2}} \mathcal{E}^{\frac{1}{2}}_{\omega,\neq} \mathcal{D}^{\frac{1}{2}}_{\omega, \neq}\mathcal{D}^{\frac{1}{2}}_{\omega, 0}+\nu^{-\frac{3}{4}}\mathcal{E}_{\omega,\neq}^{\frac{1}{2}}  \mathcal{D}_{\omega, \neq}+\nu^{-\frac12}\mathcal{E}_{\omega,0}^{\fr12}\mathcal{D}_{\omega,\neq}+\nu^{-\frac{3}{4}}\|u^1_{0}\|_{L^\infty}\mathcal{D}_{\omega,\neq}\n\\ 
&+(\nu^{-\frac12}\mu^{-\frac56}+\nu^{-1}\mu^{-\fr12})\mathcal{D}_{\theta,\neq},
\end{align}
\begin{align}\label{2.17}
\ddt \mathcal{E}_{\theta,\neq} +2^{-14}\mathcal{D}_{\theta,\neq}
\leq&\mu^{-\frac{1}{2}} \mathcal{E}^{\frac{1}{2}}_{\theta,\neq} \mathcal{D}^{\frac{1}{2}}_{\omega, \neq}\mathcal{D}^{\frac{1}{2}}_{\theta, 0}+(\nu^{-\frac{1}{2}}+\mu^{-\frac{1}{2}}+\nu^{-\frac{1}{4}} \mu^{-\frac{1}{2}})\mathcal{E}_{\theta,\neq}^{\frac{1}{2}}  \mathcal{D}_{\omega, \neq}^{\frac{1}{2}}\mathcal{D}_{\theta, \neq}^{\frac{1}{2}}\n\\ 
&+\nu^{-\frac18}\mu^{-\frac13}\mathcal{E}_{\om,0}^{\fr12}\mathcal{D}_{\theta,\neq}+\mu^{-\frac12}\mathcal{E}_{\theta,0}^{\fr12}\mathcal{D}_{\theta,\neq}^{\fr12}\mathcal{D}_{\om,\neq}^{\fr12}+\mu^{-\frac12}\mathcal{E}_{\theta,\neq}^{\frac12}\mathcal{D}_{\theta,\neq}\n\\  
&+\mu^{-\frac{2}{3}}\|u^1_{0}\|_{L^\infty}\mathcal{D}_{\theta,\neq}+\nu^{\frac{1}{4}}\mu^{-\frac{3}{4}} \mathcal{E}^{\frac{1}{2}}_{\om,\neq} \mathcal{D}^{\frac{1}{2}}_{\theta, \neq}\mathcal{D}^{\frac{1}{2}}_{\theta, 0}\n\\ 
	&+(\nu^{\fr14}\mu^{-\fr34}+\mu^{-\frac{2}{3}}+\nu^{\fr14}\mu^{-1})\mathcal{E}_{\om,\neq}^{\fr12}\mathcal{D}_{\theta,\neq}.
\end{align}
	\end{prop}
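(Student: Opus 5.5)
The plan is to run a hypocoercivity energy estimate mode by mode and then sum over $k$ with the weights $|k|^{2m}{\rm e}^{2\delta_0\kappa^{1/2}t}$. Write the $k$-th Fourier mode of \eqref{1.3} as
\[
\de_t\om_k+iky^2\om_k-2ik\phi_k-\nu\Delta_k\om_k=ik\theta_k-N^\om_k,
\]
and analogously for $\theta_k$, where $N_k=(u\cdot\nabla f)_k$. The first task is the \emph{linear} estimate: for the homogeneous problem
\[
\ddt E_\om[\om_k]+\tfrac14 D_\om[\om_k]\le 0,\qquad \ddt E_\theta[\theta_k]+2^{-14}D_\theta[\theta_k]\le 0.
\]
To get it, compute each piece of $E_\om$ separately.
\begin{enumerate}
\item $\ddt\|\om_k\|^2$ gives $\nu\|\nabla_k\om_k\|^2$. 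The Poiseuille term $-2ik\phi_k$ is skew after pairing, since $\RR\l ik\phi_k,\Delta_k\phi_k\r=0$.
\item $\nu^{1/2}|k|^{-1/2}\ddt\|\de_y\om_k\|^2$ gives $\nu^{3/2}|k|^{-1/2}\|\de_y\nabla_k\om_k\|^2$, minus the commutator $\RR\l 2iky\om_k,\de_y\om_k\r$.
\item The cross term $|k|^{-1}\RR\l iky\om_k,\de_y\om_k\r$ is the key one. Differentiating it, $[\de_y,iky^2]=2iky$ yields $-2|k|\|y\om_k\|^2$, which is the degenerate coercivity.
\item The weight $\nu^{-1/2}|k|^{1/2}(\|y\om_k\|^2+2\|\nabla_k\phi_k\|^2)$ controls the commutators from the $\nu\Delta$ term acting on $y\om_k$. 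The $\|\nabla_k\phi_k\|^2$ part is needed because $\l -2ik\phi_k,y^2\om_k\r$ does not vanish. Pairing with $\phi_k$, $\RR\l iky^2\om_k,\phi_k\r$ reduces to $\RR\l 2iky\de_y\phi_k,\phi_k\r$-type terms, giving $|k|\|\nabla_k\phi_k\|^2$ with a good sign up to controllable errors.
\end{enumerate}
Next bound the commutator errors by Cauchy–Schwarz and interpolation, e.g. $|k|^{1/2}\nu^{1/2}\|\om_k\|^2\lesssim\nu\|\nabla_k\om_k\|^2+|k|\|y\om_k\|^2$ by the uncertainty inequality $\|f\|^2\le\|yf\|\,\|\de_y f\|$. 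The coefficients $5$ and $100$ (and the $2^{-j}$ factors for $\theta$) are fixed so that each error is absorbed. Boundary terms vanish on $\T\times[0,\infty)$ thanks to the Dirichlet conditions on $\om,\theta$, using $\de_y\phi_k$ traces with $\phi_k=0$ (as $v_2=0$).

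Then add the forcing and nonlinear terms. The buoyancy $ik\theta_k$ in the $\om$-equation is paired against every component of $E_\om$. The worst pairing is with $\nu^{-1/2}|k|^{1/2}y^2\om_k$, which is bounded by
\[
\nu^{-1/2}|k|^{3/2}\|y\theta_k\|\,\|y\om_k\|\le\tfrac1{100}|k|\|y\om_k\|^2+C\nu^{-1}|k|^2\|y\theta_k\|^2.
\]
Converting $|k|^2\|y\theta_k\|^2$ into $\mathcal D_{\theta,\neq}$ costs powers of $\mu$, via interpolation between $|k|\|y\theta_k\|^2$ and $\mu^{1/2}|k|^{1/2}\|y\nabla_k\theta_k\|^2$ and using $|k|\ge1$; this produces the linear term $(\nu^{-1/2}\mu^{-5/6}+\nu^{-1}\mu^{-1/2})\mathcal D_{\theta,\neq}$ in \eqref{2.16}. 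For the transport nonlinearity, split $u\cdot\nabla\om=u_0^1\de_x\om_{\neq}+u_{\neq}\cdot\nabla\om_0+u_{\neq}\cdot\nabla\om_{\neq}$.
\begin{itemize}
\item The term $u_0^1\de_x\om_k$ gives $\|u_0^1\|_{L^\infty}$ times $\mathcal D$. After integration by parts it is skew in the pure $L^2$ part; in the weighted parts its commutator is bounded by $\|\de_yu_0^1\|_{L^\infty}$-type quantities, or directly with a $\nu^{-3/4}$ loss.
\item The term $u^2_{\neq}\de_y\om_0$ uses $\|\de_y\om_0\|$ and $\|y\de_y\om_0\|$ from $\mathcal D_{\omega,0}$ and $\mathcal E_{\omega,0}$, together with $|k|\|\nabla_k\phi_k\|^2$.
\item The nonzero–nonzero interactions use the convolution in $k$ with $m>\tfrac12$ (so that $\sum|k|^{-2m}<\infty$), placing one factor in $L^\infty_y$ via $\|f\|_{L^\infty}^2\le\|f\|\|\de_yf\|$. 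Each time a factor of $\nu^{-1/2}$ or $\nu^{-1/4}$ is lost in converting energies to dissipation, it appears as the weight visible in \eqref{2.16}–\eqref{2.17}.
\end{itemize}
The same scheme applies to $\theta$, with cross terms involving $\mathcal E_{\om,\neq}$ arising from $u_{\neq}\cdot\nabla\theta$. Finally, multiply by $|k|^{2m}{\rm e}^{2\delta_0\kappa^{1/2}t}$ and sum. The time derivative of the exponential produces $2\delta_0\kappa^{1/2}\mathcal E$, and $\kappa^{1/2}E_\om[\om_k]\lesssim D_\om[\om_k]$ because each piece of $E$ is dominated by $\nu^{-1/2}|k|^{1/2}$ times pieces of $D$. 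For instance, $\kappa^{1/2}\nu^{-1/2}|k|^{1/2}\|y\om_k\|^2\le|k|\|y\om_k\|^2$, and the cross term is bounded by Cauchy–Schwarz. So for $\delta_0$ small this is absorbed into $\tfrac18\mathcal D$.

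The main obstacle is the linear step for $\om$: showing that the $\phi_k$ term $-2ik\phi_k$, which breaks skew-symmetry in the weighted $y$-norms, is controlled. I would try first the identity
\[
\RR\l -2ik\phi_k,y^2\om_k\r+\RR\l iky^2\om_k,\Delta_k\phi_k\text{-pairing}\r,
\]
from the $\ddt\|\nabla_k\phi_k\|^2$ term with coefficient $2$, to cancel the bad contribution. Failing that, I would bound it by $|k|\|\nabla_k\phi_k\|\|y^2\om_k\|$ and use the large coefficient $100$ with the dissipation $\nu^{1/2}|k|^{1/2}\|y\nabla_k\om_k\|^2$.
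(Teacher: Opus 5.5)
Your overall route is the paper's. You use the same mode-by-mode functionals \eqref{2.6}--\eqref{2.9} and a linear hypocoercivity computation. You pair the buoyancy term against every piece of $E_\omega$: your $y^2$-pairing bound with $|k|^2\|y\theta_k\|^2\le(|k|\|y\theta_k\|^2)^{1/3}(|k|^{5/2}\|y\theta_k\|^2)^{2/3}$ is exactly how \eqref{4.66} gets $\nu^{-1}\mu^{-1/2}$, while $\nu^{-1/2}\mu^{-5/6}$ comes from the unweighted pairings in \eqref{4.65}. You split the transport term into $l=k$, $l=0$ and $l\neq0,k$ pieces using $m>\frac12$, and absorb $2\delta_0\kappa^{1/2}\mathcal{E}$ into $\mathcal{D}$.

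However, one step of the linear $\omega$-estimate fails as written. In item 4 you claim that pairing $iky^2\omega_k$ with $\phi_k$ gives $|k|\|\nabla_k\phi_k\|^2$ with a good sign. It does not: after integrating by parts, $\RR\langle iky^2\Delta_k\phi_k,\phi_k\rangle=2\RR\langle y\partial_y\phi_k,ik\phi_k\rangle$, which has no sign. This is precisely the term that, thanks to the factor $2$ in front of $\|\nabla_k\phi_k\|^2$, cancels the $-4\RR\langle y\partial_y\phi_k,ik\phi_k\rangle$ produced by $-2ik\phi_k$ in the $\|y\omega_k\|^2$ balance (compare \eqref{4.49} and \eqref{4.50}). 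So the first option in your last paragraph is the right one, but it uses this term up; the term cannot also be your source of coercivity. Your fallback, estimating the bad term and leaning on the coefficient $100$, cannot work. The term is linear and carries the same prefactor $100\nu^{-1/2}|k|^{1/2}$, so only exact cancellation removes it.

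The dissipation $|k|\|\nabla_k\phi_k\|^2$ actually comes from the cross term, which your item 3 treats only through $[\partial_y,iky^2]$. The Poiseuille term contributes $-2\RR\langle 2iky\partial_y\omega_k+ik\omega_k,ik\phi_k\rangle=-4|k|^2\|\partial_y\phi_k\|^2$ to $\frac{d}{dt}\RR\langle iky\omega_k,\partial_y\omega_k\rangle$. To see this, integrate by parts using $\omega_k=\Delta_k\phi_k$; the boundary term $\frac12[y|\partial_y\phi_k|^2]$ vanishes for $D=\mathbb{R}$ or $[0,\infty)$. Then \eqref{4.59}, $|k|^2\|\phi_k\|^2\le2\|\partial_y\phi_k\|^2+\|y\omega_k\|^2$, upgrades this to $|k|\|\nabla_k\phi_k\|^2$. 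You cannot do without it: it is what gives \eqref{psi}, $\sum_{k\neq0}|k|^{2m+2}\|\phi_k\|_{L^\infty}^2\lesssim\mathcal{D}_{\omega,\neq}$, which enters nearly every nonlinear bound.

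The $\phi$-energy also supplies $200\nu^{1/2}|k|^{1/2}\|\omega_k\|^2$. That is what beats the error $+100\nu^{1/2}|k|^{1/2}\|\omega_k\|^2$ from the $\|y\omega_k\|^2$ balance and leaves the $\nu^{1/2}|k|^{1/2}\|\omega_k\|^2$ part of $D_\omega$. With the weights $1,5,100$ of \eqref{2.6}, your uncertainty inequality $\|\omega_k\|^2\le2\|y\omega_k\|\|\partial_y\omega_k\|$ cannot absorb that error against the available $\nu\|\partial_y\omega_k\|^2$ and $5|k|\|y\omega_k\|^2$. It is the right tool for $\theta$, as in \eqref{4.7}, where the small weights $2^{-7},2^{-10},2^{-9}$ allow it.
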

Combining the above two results, we now begin to prove Theorem \ref{thm1.1}.

{\bf Proof of Theorem \ref{thm1.1}:} Firstly, for any $k \in \mathbb{Z}$, we define the following two energy functionals:
\begin{equation}\label{2.18}
	\mathfrak{E}_{\om}:=\mathcal{E}_{\omega,0}+\mathcal{E}_{\om,\neq}+\fr14\int_0^t(\mathcal{D}_{\om,0}+\mathcal{D}_{\om,\neq})(\tau){\rm d}\tau,
\end{equation}
\begin{equation}\label{2.19}
	\mathfrak{E}_{\theta}:=\mathcal{E}_{\theta,0}+\mathcal{E}_{\theta,\neq}+\frac{1}{2^{14}}\int_0^t(\mathcal{D}_{\theta,0}+\mathcal{D}_{\theta,\neq})(\tau){\rm d}\tau.
\end{equation} 
Then by combining Propositions \ref{p1}--\ref{p2}, we can conclude  
\begin{align}\notag
	\|u_0^1(t)\|^2+\nu\int_0^t\|\de_yu_0^1(\tau)\|^2{\rm d}\tau\lesssim&	\|u_{0,in}^1\|^2+\left(\sup_{0\leq\tau\leq t}\mathcal{E}_{\omega,0}^{\fr12}(\tau)\right)\int_0^t\mathcal{D}_{\omega,\neq}(\tau){\rm d}\tau\n\\ 
	\lesssim&\|u_{0,in}^1\|^2+\mathfrak{E}_{\omega}(t)^{\fr32},\notag
	\end{align}
     and then
\begin{equation}\label{2.20}
	\|u_0^1(t)\|_{L^\infty}\lesssim\|u_0^1(t)\|^{\fr12}\|\om_0(t)\|^{\fr12}\lesssim\|u_{0,in}^1\|^{\fr12}\mathfrak{E}_{\omega}(t)^{\fr14}+\mathfrak{E}_{\omega}(t)^{\fr58}.
	\end{equation}
In addition, 
\begin{align}\label{2.21}
\mathfrak{E}_{\omega}(t) 
\lesssim& \nu^{-\fr12}\left(\sup_{0\leq\tau\leq t}\mathcal{E}_{\omega,\neq}^{\fr12}(\tau)\right)\int_0^t(\mathcal{D}_{\omega,0}^{\fr12}\mathcal{D}_{\omega,\neq}^{\fr12})(\tau){\rm d}\tau+\nu^{-\frac{3}{4}}\left(\sup_{0\leq\tau\leq t}\mathcal{E}_{\omega,\neq}^{\fr12}(\tau)\right)\int_0^t\mathcal{D}_{\omega, \neq}(\tau){\rm d}\tau\n\\	
&+\nu^{-\frac12}\left(\sup_{0\leq\tau\leq t}\mathcal{E}_{\omega,0}^{\fr12}(\tau)\right)\int_0^t\mathcal{D}_{\omega, \neq}(\tau){\rm d}\tau+\nu^{-\frac{3}{4}}\left(\sup_{0\leq\tau\leq t}\|u^1_{0}(\tau)\|_{L^\infty}\right)\int_0^t\mathcal{D}_{\omega, \neq}(\tau){\rm d}\tau\n\\
&+(\nu^{-\frac12}\mu^{-\frac56}+\nu^{-1}\mu^{-\fr12})\int_0^t\mathcal{D}_{\theta,\neq}(\tau){\rm d}\tau+\mathfrak{E}_{\omega}(0) \n\\ 
\lesssim&\mathfrak{E}_{\omega}(0) +\nu^{-\frac{3}{4}}\left(\sup_{0\leq\tau\leq t}\mathfrak{E}_{\omega}(\tau) \right)^{\frac32}+\nu^{-\frac{3}{4}}\|u_{0,in}^1\|^{\fr12}\left(\sup_{0\leq\tau\leq t}\mathfrak{E}_{\omega}(\tau) \right)^{\frac{5}{4}}\n\\ 
&+(\nu^{-\frac12}\mu^{-\frac56}+\nu^{-1}\mu^{-\fr12})\mathfrak{E}_{\theta}(t),
\end{align}
\begin{align}\label{2.22}
\mathfrak{E}_{\theta}(t) 
\lesssim& (\mu^{-\fr12}+\nu^{-\fr14}\mu^{-\fr14})\left(\sup_{0\leq\tau\leq t}\mathcal{E}_{\theta,\neq}^{\fr12}(\tau)\right)\int_0^t(\mathcal{D}_{\theta,0}^{\fr12}\mathcal{D}_{\omega,\neq}^{\fr12})(\tau){\rm d}\tau\n\\ 
&+\nu^{\fr14}\mu^{-\fr34}\left(\sup_{0\leq\tau\leq t}\mathcal{E}_{\omega,\neq}^{\fr12}(\tau)\right)\int_0^t(\mathcal{D}_{\theta,0}^{\fr12}\mathcal{D}_{\theta,\neq}^{\fr12})(\tau){\rm d}\tau\n\\ 
&+(\nu^{-\frac{1}{2}}+\mu^{-\frac{1}{2}}+\nu^{-\frac{1}{4}} \mu^{-\frac{1}{2}})\left(\sup_{0\leq\tau\leq t}\mathcal{E}_{\theta,\neq}^{\fr12}(\tau)\right)\int_0^t(\mathcal{D}_{\omega, \neq}^{\frac{1}{2}}\mathcal{D}_{\theta, \neq}^{\frac{1}{2}})(\tau){\rm d}\tau\n\\ 
&+\nu^{-\frac18}\mu^{-\frac13}\left(\sup_{0\leq\tau\leq t}\mathcal{E}_{\om,0}^{\fr12}(\tau)\right)\int_0^t\mathcal{D}_{\theta,\neq}(\tau){\rm d}\tau+\mu^{-\frac12}\left(\sup_{0\leq\tau\leq t}\mathcal{E}_{\theta,0}^{\fr12}(\tau)\right)\int_0^t(\mathcal{D}_{\theta,\neq}^{\fr12}\mathcal{D}_{\om,\neq}^{\fr12})(\tau){\rm d}\tau\n\\  
&+\mu^{-\frac12}\left(\sup_{0\leq\tau\leq t}\mathcal{E}_{\theta,\neq}^{\fr12}(\tau)\right)\int_0^t\mathcal{D}_{\theta,\neq}(\tau){\rm d}\tau+\mu^{-\frac{2}{3}}\left(\sup_{0\leq\tau\leq t}\|u^1_{0}(\tau)\|_{L^\infty}\right)\int_0^t\mathcal{D}_{\theta, \neq}(\tau){\rm d}\tau\n\\ 
	&+(\nu^{\fr14}\mu^{-\fr34}+\mu^{-\frac{2}{3}}+\nu^{\fr14}\mu^{-1})\left(\sup_{0\leq\tau\leq t}\mathcal{E}_{\om,\neq}^{\fr12}(\tau)\right)\int_0^t\mathcal{D}_{\theta,\neq}(\tau){\rm d}\tau\n\\ 
	\lesssim&(\nu^{-\frac{1}{2}}+\mu^{-\frac{2}{3}}+\nu^{-\frac{1}{4}} \mu^{-\frac{1}{2}}+\nu^{\fr14}\mu^{-1}+\nu^{-\frac18}\mu^{-\frac13})\left(\sup_{0\leq\tau\leq t}\mathfrak{E}_{\theta}(\tau)\right) \left(\sup_{0\leq\tau\leq t}\mathfrak{E}_{\omega}(\tau) \right)^{\frac12}\n\\ 
	&+\mu^{-\fr12}\left(\sup_{0\leq\tau\leq t}\mathfrak{E}_{\theta}(\tau) \right)^{\frac32}+\mu^{-\frac{2}{3}}\|u_{0,in}^1\|^{\fr12}\left(\sup_{0\leq\tau\leq t}\mathfrak{E}_{\omega}(\tau) \right)^{\frac{1}{4}}\left(\sup_{0\leq\tau\leq t}\mathfrak{E}_{\theta}(\tau) \right)+\mathfrak{E}_{\theta}(0) .
\end{align}
With \eqref{2.21}--\eqref{2.22}, it is straightforward to verify from \eqref{1.4}--\eqref{1.5} that
\begin{equation}\notag
	\mathfrak{E}_{\omega}(t) \lesssim \varepsilon_0\min\{\nu^{\fr34},\mu^{\fr34},\nu^{-\fr14}\mu\},\quad~~\mathfrak{E}_{\theta}(t) \lesssim \varepsilon_0\min\{\nu^{\fr94},\mu^{\fr94}\},
\end{equation}
which implies that \eqref{1.6} and \eqref{1.7} hold. The proof of Theorem \ref{thm1.1} is completed. 

The proof of Theorem \ref{thm1.2} is similar to Theorem \ref{thm1.1}, except that we can use Poincar\'e inequality at this time, so we can also get the following estimates.
\begin{prop}\label{p3}
Under the conditions of Theorem \ref{thm1.2}, we have
	\begin{equation}\label{2.23}
	\ddt\|u_0^1\|^2+\nu\|\de_yu_0^1\|^2\lesssim	\mathcal{E}_{\omega,0}^{\fr12}\mathcal{D}_{\omega,\neq},
	\end{equation}
	\begin{equation}\label{2.24}
\ddt\left({\rm e}^{2\delta_0\kappa t}\mathcal{E}_{\omega,0}\right)+\fr13{\rm e}^{2\delta_0\kappa t}\mathcal{D}_{\omega,0}\lesssim \nu^{-\fr12}\left({\rm e}^{2\delta_0\kappa t}\mathcal{E}_{\omega,\neq}\right)^{\fr12}\left({\rm e}^{2\delta_0\kappa t}\mathcal{D}_{\omega,0}\right)^{\fr12}\mathcal{D}_{\omega,\neq}^{\fr12},		
	\end{equation}
\begin{align}\label{2.25}
\ddt\left({\rm e}^{2\delta_0\kappa t}\mathcal{E}_{\theta,0}\right)+\fr13{\rm e}^{2\delta_0\kappa t}\mathcal{D}_{\theta,0}\lesssim& (\mu^{-\fr12}+\nu^{-\fr14}\mu^{-\fr14})\left({\rm e}^{2\delta_0\kappa t}\mathcal{E}_{\theta,\neq}\right)^{\fr12}\left({\rm e}^{2\delta_0\kappa t}\mathcal{D}_{\theta,0}\right)^{\fr12}\mathcal{D}_{\omega,\neq}^{\fr12}\n\\ 
&+\nu^{\fr14}\mu^{-\fr34}\mathcal{E}_{\om,\neq}^{\fr12}\left({\rm e}^{2\delta_0\kappa t}\mathcal{D}_{\theta,0}\right)^{\fr12}\mathcal{D}_{\theta,\neq}^{\fr12}.		
	\end{align}
\end{prop}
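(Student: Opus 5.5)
We prove Proposition \ref{p3} by repeating the zero-mode energy estimates behind Proposition \ref{p1} on $D=[-1,1]$, then adding one new ingredient: the Poincar\'e inequality, which lets part of the dissipation control the energy and pay for the exponential weight ${\rm e}^{2\delta_0\kappa t}$.

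\textbf{Zero-mode equations.} Averaging \eqref{1.3} in $x$ gives
\[
\de_t\om_0-\nu\de_{yy}\om_0=-(u_{\neq}\cdot\nabla\om_{\neq})_0,\qquad
\de_t\theta_0-\mu\de_{yy}\theta_0=-(u_{\neq}\cdot\nabla\theta_{\neq})_0,
\]
with $\om_0(\pm1)=\theta_0(\pm1)=0$. The transport terms $y^2\de_x$, $2\de_x\phi$ and $\de_x\theta$ all vanish at $k=0$.

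\textbf{Estimate \eqref{2.23}.} We test the $u_0^1$ equation, $\de_tu_0^1-\nu\de_{yy}u_0^1=-\de_y(u^2_{\neq}u^1_{\neq})_0$, against $u_0^1$. This is exactly the computation of \eqref{2.13a}, so \eqref{2.23} is proved identically.

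\textbf{Linear part of \eqref{2.24}.} We compute $\ddt\mathcal{E}_{\om,0}$ term by term.
\begin{itemize}
\item Testing with $\om_0$ gives $\nu\|\de_y\om_0\|^2$.
\item Testing with $-\nu^{\fr12}\de_{yy}\om_0$ gives $\nu^{\fr32}\|\de_{yy}\om_0\|^2$. The boundary terms vanish because $\de_t\om_0=0$ on $\de D$.
\item For $\|y\de_y\om_0\|^2$ we differentiate the equation in $y$ and test with $y^2\de_y\om_0$. This produces $\nu\|y\de_{yy}\om_0\|^2$ plus a commutator $\nu\l 2y\de_y\om_0,\de_{yy}\om_0\r$, bounded by $\nu\|\de_y\om_0\|\,\|y\de_{yy}\om_0\|$ and absorbed. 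The boundary term $\nu y^2\de_y\om_0\de_{yy}\om_0|_{\pm1}$ vanishes because the equation at the boundary gives $\nu\de_{yy}\om_0=(u\cdot\nabla\om)_0=0$ there. Here $u^2=0$ and $\de_x\om=0$ on $\de D$, since $\om=0$ there.
\end{itemize}
The $\|\de_yu_0\|^2$ part of $\mathcal{D}_{\om,0}$ comes from $\|\om_0\|=\|\de_yu^1_0\|$. This recovers $\ddt\mathcal{E}_{\om,0}+\fr12\mathcal{D}_{\om,0}\le$ (nonlinear terms).

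\textbf{Nonlinear part of \eqref{2.24}.} The nonlinear terms are handled exactly as in \eqref{2.14}. We write $(u_{\neq}\cdot\nabla\om_{\neq})_0=\de_y(u^2_{\neq}\om_{\neq})_0$, integrate by parts onto the dissipation, and bound $u_{\neq}$ and $\om_{\neq}$ by $\mathcal{E}_{\om,\neq}^{\fr12}$ and $\mathcal{D}_{\om,\neq}^{\fr12}$.

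\textbf{Role of Poincar\'e (new ingredient).} On $[-1,1]$ with Dirichlet data, $\|\om_0\|\lesssim\|\de_y\om_0\|$. Since $|y|\le1$, also $\|y\de_y\om_0\|\le\|\de_y\om_0\|$ and $\nu^{\fr12}\|\de_y\om_0\|^2\le\nu^{\fr12}\|\de_y\om_0\|^2$ trivially. Hence
\[
\kappa\,\mathcal{E}_{\om,0}\lesssim\nu\|\de_y\om_0\|^2+\nu^{\fr32}\|\de_{yy}\om_0\|^2\le \mathcal{D}_{\om,0},
\]
using $\nu^{\fr12}\cdot\nu\le\nu$ for $\nu\le1$. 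Multiplying the differential inequality by ${\rm e}^{2\delta_0\kappa t}$ creates the extra term $2\delta_0\kappa{\rm e}^{2\delta_0\kappa t}\mathcal{E}_{\om,0}\le C\delta_0{\rm e}^{2\delta_0\kappa t}\mathcal{D}_{\om,0}$. This is absorbed for small $\delta_0$, lowering $\fr12$ to $\fr13$.

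\textbf{Weight bookkeeping on the right.} The right side of \eqref{2.14} is $\nu^{-\fr12}\mathcal{E}_{\om,\neq}^{\fr12}\mathcal{D}_{\om,0}^{\fr12}\mathcal{D}_{\om,\neq}^{\fr12}$, where $\mathcal{E}_{\om,\neq}$ and $\mathcal{D}_{\om,\neq}$ already contain ${\rm e}^{2\delta_0\kappa^{\fr12}t}$. We distribute ${\rm e}^{2\delta_0\kappa t}={\rm e}^{\delta_0\kappa t}\cdot{\rm e}^{\delta_0\kappa t}$ over the $\mathcal{E}_{\om,\neq}^{\fr12}$ and $\mathcal{D}_{\om,0}^{\fr12}$ factors. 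This gives precisely the stated form of \eqref{2.24}.

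\textbf{Estimate \eqref{2.25}.} The $\theta_0$ computation is identical, with $\mu$ in place of $\nu$. The only difference is that the nonlinearity $(u_{\neq}\cdot\nabla\theta_{\neq})_0$ mixes the $\om$ and $\theta$ functionals as in \eqref{2.15}. Poincar\'e again gives $\kappa\mathcal{E}_{\theta,0}\lesssim\mathcal{D}_{\theta,0}$.

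\textbf{Main obstacle.} We expect the hardest step to be the boundary terms in the weighted $y\de_y$ and $\nu^{\fr12}\de_y$ estimates on the finite channel. These terms do not appear on $\T\times\R$, so the argument must verify that they vanish or have a favorable sign; the route is the boundary identity $\nu\de_{yy}\om_0=0$ on $\de D$ derived above. The rest follows Proposition \ref{p1} line by line.
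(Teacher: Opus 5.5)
Your route matches the paper's. You redo the zero-mode identities \eqref{3.2}--\eqref{3.9} (with $\nu$ in place of $\mu$ for $\omega_0$) and remove the boundary terms using $(u\cdot\nabla\omega)_0=(u\cdot\nabla\theta)_0=0$ on $\partial D$. You then absorb $2\delta_0\kappa\,\mathcal{E}_{\omega,0}$ and $2\delta_0\kappa\,\mathcal{E}_{\theta,0}$ by Poincar\'e as in \eqref{3.10}, and split ${\rm e}^{2\delta_0\kappa t}$ among the factors on the right. This gives \eqref{2.23}, and \eqref{2.25} up to the constant issue below.

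The step that fails is your claim that the $\|\partial_y u_0\|^2$ part of $\mathcal{D}_{\omega,0}$ ``comes from'' $\|\omega_0\|=\|\partial_y u_0^1\|$. In none of the identities you test does $\|\omega_0\|^2$ appear on the dissipative side; it appears only under $\ddt$. The only dissipative control you have is $\nu\|\partial_y u_0^1\|^2=\nu\|\omega_0\|^2\le C_\star\nu\|\partial_y\omega_0\|^2$.

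No sharper argument can supply this term, because \eqref{2.24} with it at unit strength is false. Take the exact shear solution $\omega_{\neq}\equiv0$, $\theta\equiv0$, $\omega_0=\epsilon\,{\rm e}^{-\pi^2\nu t/4}\cos(\pi y/2)$. Then:
\begin{itemize}
\item the right-hand side of \eqref{2.24} vanishes;
\item $\ddt\mathcal{E}_{\omega,0}=-\frac{\pi^2}{2}\nu\,\mathcal{E}_{\omega,0}$;
\item $\mathcal{D}_{\omega,0}\ge\|\omega_0\|^2\ge c\,\mathcal{E}_{\omega,0}$, with $c$ independent of $\nu\le1$.
\end{itemize}
So the left-hand side is at least $\big(\frac{c}{3}-\frac{\pi^2}{2}\nu\big){\rm e}^{2\delta_0\kappa t}\mathcal{E}_{\omega,0}>0$ once $\nu$ is small. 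The paper's own sketch (``one can finally get \eqref{2.14} and \eqref{2.24}'') does not produce this term either, and the same objection applies to \eqref{2.14}, on which you rely. What your argument actually proves is \eqref{2.24} with $\|\partial_y u_0\|^2$ in $\mathcal{D}_{\omega,0}$ replaced by $\nu\|\partial_y u_0\|^2$, or simply dropped.

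A smaller point affects the constant $\frac13$ in both \eqref{2.24} and \eqref{2.25}. Bounding the commutator $-2\nu\RR\langle y\partial_y\omega_0,\partial_{yy}\omega_0\rangle$ by $2\nu\|\partial_y\omega_0\|\,\|y\partial_{yy}\omega_0\|$ and absorbing it does not leave $\frac12$ of the dissipation. With the weights in \eqref{2.2} it guarantees only $\frac{3-\sqrt5}{4}\approx0.19$ of $\nu\|\partial_y\omega_0\|^2+\nu\|y\partial_{yy}\omega_0\|^2$, which is below $\frac13$. Integrate by parts exactly instead, as in \eqref{3.8}. On $[-1,1]$ the commutator equals $\nu\|\partial_y\omega_0\|^2-\nu\big(|\partial_y\omega_0(1)|^2+|\partial_y\omega_0(-1)|^2\big)$. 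The boundary part has the favourable sign, and this yields the $\frac12$ before the Poincar\'e absorption.
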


\begin{prop}\label{p4}
Under the conditions of Theorem \ref{thm1.2}, we have \eqref{2.16}--\eqref{2.17}.
\end{prop}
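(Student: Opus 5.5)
Proposition \ref{p4} asserts that the nonzero-mode estimates \eqref{2.16}--\eqref{2.17} also hold on the channel $D=[-1,1]$. The plan is to rerun the proof of Proposition \ref{p2} line by line and check that every step survives on a bounded interval with Dirichlet data $\omega_k(\pm1)=\theta_k(\pm1)=0$ and $\phi_k(\pm1)=0$. Since $\de_yv_1=2y$ on $\de D$, the Navier-slip condition gives vanishing vorticity perturbation on the boundary.

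\textbf{Step 1: the differential identities.} For each $k\neq0$, I take the Fourier-transformed equations \eqref{4.1} and compute $\ddt$ of each piece of $E_\omega[\omega_k]$ and $E_\theta[\theta_k]$:
\begin{itemize}
\item $\|\omega_k\|^2$ and $\nu^{1/2}|k|^{-1/2}\|\de_y\omega_k\|^2$;
\item the cross term $|k|^{-1}\RR\l iky\omega_k,\de_y\omega_k\r$, which produces the key coercive quantity $|k|\|y\omega_k\|^2$ through the commutator $[\de_y,iky^2]=2iky$;
\item the weighted terms $\|y\omega_k\|^2$ and $\|\nabla_k\phi_k\|^2$.
\end{itemize}
The issue is which integrations by parts generate boundary contributions. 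For the $L^2$, $y$-weighted and $\phi$ pieces, all boundary terms carry a factor $\omega_k$, $\theta_k$ or $\phi_k$ at $y=\pm1$, so they vanish.

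\textbf{Step 2: the main obstacle, the $\de_y$-energy and cross term.} For the $\de_y$-energy and the cross term, testing $\nu\de_{yy}\omega_k$ against $\de_{yy}\omega_k$ (or against $y\de_y\omega_k$) gives boundary traces of the form $\de_y\omega_k\,\de_t\omega_k|_{\pm1}$ and $y|\de_y\omega_k|^2|_{\pm1}$.
\begin{itemize}
\item The first vanishes, since $\de_t\omega_k=0$ on the boundary.
\item For the second, I rewrite the cross-term derivative so that the dissipative part reads $-2\nu\RR\l iky\de_y\omega_k,\de_{yy}\omega_k\r$. This is real-part zero after symmetrization: $\RR(i\,\overline{f}f')$ integrated with $y$ gives $\RR\, i|f|^2$ boundary terms, which are purely imaginary. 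I expect this to work because $iky$ is anti-Hermitian in that pairing.
\item Terms like $\l\de_x\phi_k,\ldots\r$ and $\l ik\theta_k,\ldots\r$ are interior terms and are handled exactly as before.
\end{itemize}
The only genuinely new check is that the elliptic bounds relating $\|\nabla_k\phi_k\|$, $\|y\omega_k\|$ and $\|\omega_k\|$ still hold on $[-1,1]$. With $\phi_k(\pm1)=0$, the identity $\|\nabla_k\phi_k\|^2=-\l\omega_k,\phi_k\r$ still holds. Moreover $\|\de_y\phi_k\|\lesssim\|y\omega_k\|$-type bounds follow by the same Hardy-type argument, because $y\in[-1,1]$ is bounded and the Poincar\'e inequality only improves constants.

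\textbf{Step 3: the nonlinear terms.} Writing $u\cdot\nabla\omega$ as $u_0^1ik\omega_k+u_{\neq}\cdot\nabla\omega_0+\sum_{l}u_{k-l}\cdot\nabla_l\omega_l$, I bound each piece as in Proposition \ref{p2}. The tools are Gagliardo--Nirenberg on the interval, $\|f\|_{L^\infty}\lesssim\|f\|^{1/2}\|\de_yf\|^{1/2}$ for functions vanishing at the boundary (or including a $\|f\|$ term, which is harmless since $D$ is bounded), together with the sum over $l$ controlled by $m>\frac12$. Boundary terms from integrating by parts the transport term vanish, since $u^2=-\de_x\phi$ vanishes on $\de D$.

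Having verified that each term's constant depends only on universal quantities, the linear coercivity $\ddt E+cD\leq\ldots$ and the nonlinear bounds are identical to those on $\R$ and $[0,\infty)$. Multiplying by $|k|^{2m}{\rm e}^{2\delta_0\kappa^{1/2}t}$ and summing over $k$ then yields \eqref{2.16}--\eqref{2.17}, with the growth factor absorbed by $\delta_0\kappa^{1/2}\lesssim\nu^{1/2}|k|^{1/2}$.
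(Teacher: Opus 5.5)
You take the same route as the paper, whose proof of this proposition is only the assertion that the computations of Section~\ref{s4} carry over verbatim to $D=[-1,1]$. The gap, which that one-line argument shares, is your claim in Step~2 that the terms coming from $-2ik\phi_k$ are ``interior terms handled exactly as before''.

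In the cross-term identity \eqref{4.48}, the paper turns $\RR\l 2iky\de_y\omega_k+ik\omega_k,-2ik\phi_k\r$ into the coercive term $-4|k|^2\|\de_y\phi_k\|^2$ by integrating by parts. One of these steps is $\RR\l y\de_{yy}\phi_k,\de_y\phi_k\r=-\frac12\|\de_y\phi_k\|^2+\frac12\big[y|\de_y\phi_k|^2\big]_{-1}^{1}$. That trace involves $\de_y\phi_k=-u^1_k$, the slip velocity, which the Navier-slip condition leaves free. On $\R$ the trace vanishes by decay, and on $[0,\infty)$ because of the factor $y$ at $y=0$. On the channel, however, with $B_k:=|\de_y\phi_k(1)|^2+|\de_y\phi_k(-1)|^2$, one gets
\[
\RR\l 2iky\de_y\omega_k+ik\omega_k,-2ik\phi_k\r=-4|k|^2\|\de_y\phi_k\|^2+2|k|^2B_k .
\]
Equivalently, on every domain the $\nu$-independent coercive quantity produced by the cross term is exactly $2|k|^2\big(\|y\omega_k-\de_y\phi_k\|^2+|k|^2\|\phi_k\|^2\big)$. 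This equals $2|k|^2\|y\omega_k\|^2+4|k|^2\|\de_y\phi_k\|^2$ only when $B_k$ drops out.

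After the weight $\frac52|k|^{-1}$ in \eqref{2.6}, this gives an extra source $5|k|B_k$ on the right of \eqref{4.60}, and that source is not small. Take $k=\pm1$ and $\phi_k\approx d(1-y^2)$, modified near $y=\pm1$ so that $\omega_k(\pm1)=0$. Then the net coercivity is about $6d^2$, while the left side of \eqref{4.60} claims $4|k|\|y\omega_k\|^2+|k|\|\nabla_k\phi_k\|^2\approx 19d^2$. For small $\nu$, the $\nu$-weighted parts of $D_\omega[\omega_k]$ cannot make up the difference. So \eqref{4.60} is false on the channel, and the argument for \eqref{2.16} does not go through as written.

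Closing it needs a genuinely new ingredient. One candidate is a uniform-in-$k$ lower bound $\|y\omega_k-\de_y\phi_k\|^2+|k|^2\|\phi_k\|^2\ge c_*\big(\|y\omega_k\|^2+\|\nabla_k\phi_k\|^2\big)$. For large $|k|$ this can be obtained from a localized form of the representation $\de_y\phi_k(\pm1)=\pm\int_{-1}^1\omega_k\frac{\sinh(|k|(1\pm y))}{\sinh(2|k|)}\,dy$. For bounded $|k|$ it needs a separate (e.g.\ compactness) argument. It would presumably have to be followed by a rebalancing of the weights in \eqref{2.6}.

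Two smaller points. First, the viscous cross term $-2\nu\RR\l iky\de_y\omega_k,\de_{yy}\omega_k\r$ does not have zero real part; only the boundary part of its symmetrization does. The interior part $2\nu k\int y\,\mathrm{Im}\big(\de_y\omega_k\overline{\de_{yy}\omega_k}\big)\,dy$ remains and must be estimated as in \eqref{4.62}. Second, $\|\de_y\phi_k\|\lesssim\|y\omega_k\|$ is false even on $\R$ (let $\omega_k$ concentrate at $y=0$). The elliptic bound actually used is \eqref{4.59}, which does survive on $[-1,1]$ because its boundary term $-B_k$ has the favorable sign.
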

{\bf Proof of Theorem \ref{thm1.2}:} Firstly, for any $k \in \mathbb{Z}$, we define the following two energy functionals:
\begin{equation}\label{2.26}
	\bar{\mathfrak{E}_{\om}}:={\rm e}^{2\delta_0\kappa t}\mathcal{E}_{\omega,0}+\mathcal{E}_{\om,\neq}+\fr14\int_0^t({\rm e}^{2\delta_0\kappa \tau}\mathcal{D}_{\om,0}+\mathcal{D}_{\om,\neq})(\tau){\rm d}\tau,
\end{equation}
\begin{equation}\label{2.27}
	\bar{\mathfrak{E}_{\theta}}:={\rm e}^{2\delta_0\kappa t}\mathcal{E}_{\theta,0}+\mathcal{E}_{\theta,\neq}+\frac{1}{2^{12}}\int_0^t({\rm e}^{2\delta_0\kappa \tau}\mathcal{D}_{\theta,0}+\mathcal{D}_{\theta,\neq})(\tau){\rm d}\tau.
\end{equation} 
Then by combining Propositions \ref{p3}--\ref{p4}, we can conclude Theorem \ref{thm1.2}.  

\begin{rem}
   Because the Poiseuille flow is more complex than the couette flow, we have to deal with more nonlinear terms in the process of constructing energy estimation, and it is also difficult to utilize the multiplier theory. 
   Therefore, we need to put forward more requirements for the initial values. For example, from \eqref{2.16}--\eqref{2.17}, we can konw that the estimation of $u_0^1$ is essential. In addition, we will also need to use the estimates of $y\de_y\theta_0$ and $y\de_y\om_0$, see \eqref{i51} for instance.
    
\end{rem}

\section{Estimate of zero mode}\label{s3}

In this section, we are going to prove Propositions \ref{p1} and \ref{p3}.
Let's first analyze the zero mode of the system \eqref{1.3}:
\begin{eqnarray}\label{zero}
\left\{\begin{aligned}
&\partial_t u_0^1  -\nu \partial_{yy}u_0^1= -(u \cdot \nabla u^1)_0= (\nabla^{\bot} \phi \cdot \nabla \de_y\phi)_0 ,\\
&\partial_t \omega_0  -\nu \partial_{yy}\omega_0= - (u \cdot \nabla \omega)_0= - (\nabla^{\bot} \phi \cdot \nabla \omega)_0 ,\\
& \partial_t \theta_0 -\mu \partial_{yy}\theta_0=- (u \cdot \nabla \theta)_0=    - (\nabla^{\bot} \phi \cdot \nabla \theta)_0,\\
&\phi_0(y\in \de D)=\omega_0(y\in \de D)=0,\ \ \theta_0(y\in \de D)=0.
\end{aligned}\right.
\end{eqnarray}

{\bf Proof of \eqref{2.13a} and \eqref{2.23}:} Taking the inner product of \eqref{zero}$_1$ with $u_0^1$ in $L^2$, and then taking
 the real part, we have
\begin{equation}\label{3.2}
	\fr12\ddt\|u_0^1\|^2+\nu\|\de_y u_0^1\|^2=\RR\l u_0^1,(\nabla^{\bot} \phi \cdot \nabla \de_y\phi)_0\r,	
	\end{equation}
where we used that $\de_yu_0^1=0$ on the boundary $y\in\de D$.
Notice that $\nabla^{\bot} \phi=(-\de_y\phi, \de_x\phi)$ and $m>\fr12$, by using H$\ddot{{\rm o}}$lder inequality and Sobolev inequality, we can estimate
\begin{align}\n
\RR\l u_0^1,(\nabla^{\bot} \phi \cdot \nabla \de_y\phi)_0\r 
=&\RR\l u_0^1,-\sum_{l \in \Z} \partial_y \phi_l \cdot i(-l)  \de_y\phi_{-l}+ \sum_{l \in \Z} il \phi_l \cdot \partial_{yy} \phi_{-l}\r\n\\ 
=&\RR\l u_0^1,\sum_{l \in \Z} \partial_y \phi_l \cdot il  \de_y\phi_{-l}+ \de_y(\sum_{l \in \Z} il \phi_l \cdot \partial_{y} \phi_{-l})-\sum_{l \in \Z} il \de_y\phi_l \cdot \partial_{y} \phi_{-l}\r\n\\ 
=&\RR\l u_0^1,\de_y(\sum_{l \in \Z} il \phi_l \cdot \partial_{y} \phi_{-l})\r=-\RR\l \de_yu_0^1,\sum_{l \in \Z} il \phi_l \cdot \partial_{y} \phi_{-l}\r\n\\ 
\lesssim&\|\de_yu_0^1\|	\sum_{k \neq 0}|k| \|\phi_k\|_{L^\infty} \|\partial_{y} \phi_{-k}\|\lesssim\|\de_yu_0^1\|	\sum_{k \neq 0}|k| \|\phi_k\|^{\fr12}\|\de_y\phi_k\|^{\fr12} \|\partial_{y} \phi_{-k}\|\n\\
\lesssim&\|\om_0\|	\left(\sum_{k \neq 0}|k| \||k|\phi_k\|\|\de_y\phi_k\|\right)^{\fr12} \left(\sum_{k \neq 0}\|\partial_{y} \phi_{k}\|^2\right)^{\fr12}\n\\
\lesssim&\|\om_0\|	\left(\sum_{k \neq 0}|k|^{2m+1}\|\nabla_k\phi_k\|^2\right)^{\fr12} \left(\sum_{k \neq 0}|k|^{2m}\|\partial_{y} \phi_{k}\|^2\right)^{\fr12}\n\\
\lesssim&\mathcal{E}_{\omega,0}^{\fr12}\mathcal{D}_{\omega,\neq}.\n
\end{align}
So we obtain \eqref{2.13a} and \eqref{2.23}.

{\bf Proof of \eqref{2.15} and \eqref{2.25}:}  By using a similar procedure as \eqref{3.2}, one can compute that
\begin{align}\label{3.3}
\fr12\ddt\|\theta_0\|^2+\mu\|\de_y\theta_0\|^2=-\RR\l\theta_0,(\nabla^{\bot} \phi \cdot \nabla \theta)_0\r,	
	\end{align}
    
\begin{align}\label{3.4}
\fr12\ddt\|\de_y\theta_0\|^2+\mu\|\de_{yy}\theta_0\|^2=-\RR\l\de_y\theta_0,\de_y(\nabla^{\bot} \phi \cdot \nabla \theta)_0\r,	
	\end{align}
    
\begin{align}\label{3.5}
	\fr12\ddt\|y\de_y\theta_0\|^2=\mu\RR\l y^2\de_y\theta_0,\de_y^3\theta_0\r-\RR\l y^2\de_y\theta_0,\de_y(\nabla^{\bot} \phi \cdot \nabla \theta)_0\r.	
	\end{align}
Here we used  integration by parts and $\theta_0=\de_{yy}\theta_0=0$ on the boundary $y\in\de D$. By H$\ddot{{\rm o}}$lder inequality and Sobolev inequality,
we have
\begin{align}\label{3.6}
-\RR\l\theta_0,(\nabla^{\bot} \phi \cdot \nabla \theta)_0\r=&\RR\langle \theta_0, \sum_{l \in \Z} \partial_y \phi_l \cdot i(-l)  \theta_{-l} - \sum_{l \in \Z} il \phi_l \cdot \partial_{y} \theta_{-l}  \rangle\n\\ 
=&\RR\langle \theta_0, \sum_{l \in \Z} \partial_y \phi_l \cdot i(-l)  \theta_{-l} - \de_y\left(\sum_{l \in \Z} il \phi_l \cdot \theta_{-l}\right)+\sum_{l \in \Z} il \de_y\phi_l \cdot \theta_{-l}  \rangle\n\\
=&\RR\langle \theta_0, - \de_y\left(\sum_{l \in \Z} il \phi_l \cdot \theta_{-l}\right)\rangle=\RR\langle \de_y\theta_0, \sum_{l \in \Z} il \phi_l \cdot \theta_{-l}\rangle\n\\
\lesssim&\|\de_y\theta_0\|	\sum_{k \neq 0}|k| \|\phi_k\|_{L^\infty} \|\theta_{-k}\|\lesssim\|\de_y\theta_0\|	\sum_{k \neq 0}|k| \|\phi_k\|^{\fr12}\|\de_y\phi_k\|^{\fr12} \|\theta_{-k}\|\n\\
\lesssim&\|\de_y\theta_0\|	\left(\sum_{k \neq 0}|k| \||k|\phi_k\|\|\de_y\phi_k\|\right)^{\fr12} \left(\sum_{k \neq 0}\|\theta_{k}\|^2\right)^{\fr12}\n\\
\lesssim&\mu^{-\fr12}\left(\mu^{\fr12}\|\de_y\theta_0\|^2\right)	\left(\sum_{k \neq 0}|k|^{2m+1}\|\nabla_k\phi_k\|^2\right)^{\fr12} \left(\sum_{k \neq 0}|k|^{2m}\|\theta_{k}\|^2\right)^{\fr12}\n\\
\lesssim&\mu^{-\fr12}\mathcal{D}_{\theta,0}^{\fr12}\mathcal{D}_{\omega,\neq}^{\fr12}\mathcal{E}_{\theta,\neq}^{\fr12},	
\end{align}
\begin{align}\label{3.7}
&-\RR\l\de_y\theta_0,\de_y(\nabla^{\bot} \phi \cdot \nabla \theta)_0\r=\RR\l\de_{yy}\theta_0,(\nabla^{\bot} \phi \cdot \nabla \theta)_0\r\n\\ 
=&\RR\langle \de_{yy}\theta_0, \de_y\left(\sum_{l \in \Z} il \phi_l \cdot \theta_{-l}\right)\rangle=\RR\langle \de_{yy}\theta_0, \sum_{l \in \Z} il \de_y\phi_l \cdot \theta_{-l}+\sum_{l \in \Z} il \phi_l \cdot \de_y\theta_{-l}\rangle\n\\
\lesssim&\|\de_{yy}\theta_0\|	\left(\sum_{k \neq 0}|k| \|\de_y\phi_k\|_{L^\infty} \|\theta_{-k}\|+\sum_{k \neq 0}|k| \|\phi_k\|_{L^\infty} \|\de_y\theta_{-k}\|\right)\n\\ 
\lesssim&\|\de_{yy}\theta_0\|	\left(\sum_{k \neq 0}|k| \||k|\phi_k\|\|\de_y\phi_k\|\right)^{\fr12} \left(\sum_{k \neq 0}\|\de_y\theta_{k}\|^2\right)^{\fr12}\n\\ 
&+\|\de_{yy}\theta_0\|	\left(\sum_{k \neq 0}|k| \||k|\de_y\phi_k\|\|\de_{yy}\phi_k\|\right)^{\fr12} \left(\sum_{k \neq 0}\|\theta_{k}\|^2\right)^{\fr12}\n\\
\lesssim&\mu^{-1}\left(\mu^{\fr32}\|\de_{yy}\theta_0\|^2\right)^{\fr12}	\left(\sum_{k \neq 0}|k|\|\nabla_k\phi_k\|^2\right)^{\fr12} \left(\mu^{\fr12}\sum_{k \neq 0}|k|^{2m-\fr12}\|\de_y\theta_{k}\|^2\right)^{\fr12}\n\\ 
&+\nu^{-\fr14}\mu^{-\fr34}\left(\mu^{\fr32}\|\de_{yy}\theta_0\|^2\right)^{\fr12}\left(\nu^{\fr12}\sum_{k \neq 0}|k|^{2m+\fr12}\|\omega_k\|^2\right)^{\fr12} \left(\sum_{k \neq 0}\|\theta_{k}\|^2\right)^{\fr12}\n\\
\lesssim&(\mu^{-1}+\nu^{-\fr14}\mu^{-\fr34})\mathcal{D}_{\theta,0}^{\fr12}\mathcal{D}_{\omega,\neq}^{\fr12}\mathcal{E}_{\theta,\neq}^{\fr12},	
\end{align}
\begin{align}\label{3.8}
\mu\RR\l y^2\de_y\theta_0,\de_y^3\theta_0\r=-\mu\|y\de_{yy}\theta_0\|^2-2\mu\RR\l y\de_y\theta_0,\de_{yy}\theta_0\r=-\mu\|y\de_{yy}\theta_0\|^2+\mu\|\de_{y}\theta_0\|^2,	\end{align}
and
\begin{align}\label{3.9}
&-\RR\l y^2\de_y\theta_0,\de_y(\nabla^{\bot} \phi \cdot \nabla \theta)_0\r\n\\
=&\RR\langle y\de_{yy}\theta_0+2\de_{y}\theta_0, y\sum_{l \in \Z} il \de_y\phi_l \cdot \theta_{-l}+y\sum_{l \in \Z} il \phi_l \cdot \de_y\theta_{-l}\rangle\n\\
\lesssim&(\|y\de_{yy}\theta_0\|+\|\de_{y}\theta_0\|)\left(\sum_{k \neq 0}|k| \|\de_y\phi_k\|_{L^\infty} \|y\theta_{-k}\|+\sum_{k \neq 0}|k| \|\phi_k\|_{L^\infty} \|y\de_y\theta_{-k}\|\right)\n\\
\lesssim&\nu^{\fr14}\mu^{-\fr34}\mathcal{D}_{\theta,0}^{\fr12}\left(\nu^{-\fr12}\sum_{k \neq 0}|k|^{2m+\fr12}\|\nabla_k\phi_k\|^2\right)^{\fr12} \left(\mu^{\fr12}\sum_{k \neq 0}|k|^{2m+\fr12}\|y\de_y\theta_{k}\|^2\right)^{\fr12}\n\\ 
&+\nu^{-\fr14}\mu^{-\fr14}\mathcal{D}_{\theta,0}^{\fr12}\left(\nu^{\fr12}\sum_{k \neq 0}|k|^{2m+\fr12}\|\omega_k\|^2\right)^{\fr12} \left(\mu^{-\fr12}\sum_{k \neq 0}|k|^{2m+\fr12}\|y\theta_{k}\|^2\right)^{\fr12}\n\\
\lesssim&\nu^{\fr14}\mu^{-\fr34}\mathcal{D}_{\theta,0}^{\fr12}\mathcal{E}_{\omega,\neq}^{\fr12}\mathcal{D}_{\theta,\neq}^{\fr12}+\nu^{-\fr14}\mu^{-\fr14}\mathcal{D}_{\theta,0}^{\fr12}\mathcal{D}_{\omega,\neq}^{\fr12}\mathcal{E}_{\theta,\neq}^{\fr12}.	
\end{align}
Substitute \eqref{3.6}, \eqref{3.7} and \eqref{3.8}--\eqref{3.9} into \eqref{3.3}, \eqref{3.4} and \eqref{3.5} respectively, and calculate \eqref{3.3}$+\frac{1}{2}\times$\eqref{3.4}$+\mu^{\frac{1}{2}}\times$\eqref{3.5} to obtain \eqref{2.15}.

For \eqref{2.25}, notice that if $\delta_0\leq \frac{1}{24C_{\star}}$ ($C_{\star}$ is the Poincar\'e constant), we have from Poincar\'e inequality that 
\begin{equation}\label{3.10}
    2\delta_0\kappa\mathcal{E}_{\theta,0}\leq \frac{1}{6}\mathcal{D}_{\theta,0}.
\end{equation}
Then multiplying both sides of equations  \eqref{3.3}, \eqref{3.4} and \eqref{3.5} by ${\rm e}^{2\delta_0\kappa t}$, respectively. Through calculations similar to those in \eqref{3.6}--\eqref{3.9} and using the definitions \eqref{2.10}--\eqref{2.13} and \eqref{3.10}, we can get \eqref{2.25}.

{\bf Proof of \eqref{2.14} and \eqref{2.24}:} By direct calculation we have
\begin{equation}\n
	\fr12\ddt\|\om_0\|^2+\nu\|\de_y\om_0\|^2=-\RR\l\om_0,(\nabla^{\bot} \phi \cdot \nabla \omega)_0\r,	
	\end{equation}
\begin{equation}\n
	\fr12\ddt\|\de_y\om_0\|^2+\nu\|\de_{yy}\om_0\|^2=-\RR\l\de_y\om_0,\de_y(\nabla^{\bot} \phi \cdot \nabla \omega)_0\r,	
	\end{equation}
\begin{equation}\n
	\fr12\ddt\|y\de_y\om_0\|^2=\nu\RR\l y^2\de_y\om_0,\de_y^3\om_0\r-\RR\l y^2\de_y\om_0,\de_y(\nabla^{\bot} \phi \cdot \nabla \omega)_0\r.	
	\end{equation}
Similar to \eqref{3.6}--\eqref{3.9}, we can prove that
\begin{equation}
-\RR\l\om_0,(\nabla^{\bot} \phi \cdot \nabla \omega)_0\r
\lesssim\nu^{-\fr12}\mathcal{D}_{\omega,0}^{\fr12}\mathcal{D}_{\omega,\neq}^{\fr12}\mathcal{E}_{\omega,\neq}^{\fr12},\n	
\end{equation}
\begin{equation}
-\RR\l\de_y\om_0,\de_y(\nabla^{\bot} \phi \cdot \nabla \omega)_0\r
\lesssim\nu^{-1}\mathcal{D}_{\omega,0}^{\fr12}\mathcal{D}_{\omega,\neq}^{\fr12}\mathcal{E}_{\omega,\neq}^{\fr12},\n	
\end{equation}
\begin{equation}
\nu\RR\l y^2\de_y\om_0,\de_y^3\om_0\r=-\nu\|y\de_{yy}\om_0\|^2+\nu\|\de_{y}\om_0\|^2,\n	\end{equation}
\begin{equation}
-\RR\l y^2\de_y\om_0,\de_y(\nabla^{\bot} \phi \cdot \nabla \omega)_0\r
\lesssim\nu^{-\fr12}\mathcal{D}_{\omega,0}^{\fr12}\mathcal{D}_{\omega,\neq}^{\fr12}\mathcal{E}_{\omega,\neq}^{\fr12}.\n	
\end{equation}
 By combining the above formulas, and using H$\ddot{{\rm o}}$lder inequality, Sobolev inequality and Poincar\'e inequality, one can finally get \eqref{2.14} and \eqref{2.24}.   

Thus, the proof of Propositions \ref{p1} and \ref{p3} is now completed.

\section{Estimate of nonzero mode}\label{s4}

In this section, we study the nonlinear estimates for the nonzero mode of system \eqref{1.3}, and prove Propositions \ref{p2} and \ref{p4}.
Taking the Fourier transform in $x$, we get that for $k \neq0$,
\begin{align}\label{4.1}
\begin{cases}
\de_t \omega_k+iky^2\omega_k -2ik\phi_k-\nu\Delta_k \omega_k+(u\cdot \nabla \omega)_k=ik\theta_k,\\
\partial_t \theta_k+iky^2\theta_k+(u \cdot \nabla \theta)_k-\mu \Delta_k \theta_k=0, \\
u_k=(-\de_y,ik) \phi_k, \quad \Delta_k\phi_k=\omega_k,\\ 
\phi_k(y\in \de D)=\omega_k(y\in \de D)=0,\ \ \theta_k(y\in \de D)=0.
\end{cases}
\end{align}

\subsection{Estimate of $\theta$ } 
\begin{lem}\label{l4.1}
If $(\omega,\theta)(t,x,y)$ solves \eqref{4.1}, we have the following balances:
  \begin{equation}\label{4.2}
  \frac12\ddt \|\theta_k\|^2 +\mu\|\nabla_k\theta_k\|^2=-\RR\l\theta_k,(u\cdot \nabla \theta)_k\r,
  \end{equation}
  \begin{equation}\label{4.3}
  \frac12\ddt \|\de_y\theta_k\|^2 +\mu\|\de_y\nabla_k\theta_k\|^2+2\RR\l\de_y\theta_k, iky\theta_k\r=-\RR\l\de_y\theta_k,\de_y(u\cdot \nabla \theta)_k\r,
  \end{equation}
  \begin{align}\label{4.4}
  &\ddt\RR\l iky \theta_k,\de_y\theta_k\r+2|k|^2\| y\theta_k\|^2 \n\\
  =&-2\mu\RR\l iky\de_{y}\theta_k, \Delta_k \theta_k\r+2\RR\l iky\de_{y}\theta_k, (u\cdot \nabla \theta)_k\r+\RR\l ik\theta_k, (u\cdot \nabla \theta)_k\r,
  \end{align}
  \begin{equation}\label{4.5}
  \frac12\ddt\| y\theta_k\|^2+\mu\|y\nabla_k\theta_k\|^2=\mu\|\theta_k\|^2-\RR\l y^2\theta_k, (u\cdot \nabla \theta)_k\r.
  \end{equation}
\end{lem}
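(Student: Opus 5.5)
The plan is to derive all four identities directly from the $\theta_k$-equation in \eqref{4.1},
\[
\de_t\theta_k=-iky^2\theta_k+\mu\Delta_k\theta_k-(u\cdot\nabla\theta)_k ,
\]
by pairing it (or its $y$-derivative) in $L^2(D)$ with a suitable multiplier, taking real parts, and integrating by parts in $y$. I use the convention $\l f,g\r=\int_D f\bar g\,\dd y$ and repeatedly two facts: $\RR\l f,g\r=\RR\l g,f\r$, and $\RR\l f,i\lambda f\r=0$ for any real weight $\lambda(y)$. In the unbounded cases $D=\R,[0,\infty)$, boundary terms at infinity are discarded: I assume the solution is smooth with enough decay, and if needed I prove the identities first for truncated weights and pass to the limit.

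\emph{Boundary facts.} On $\de D$ we have $\theta_k=0$ and $\phi_k=0$. Hence $\de_t\theta_k=0$ and $u^2_k=ik\phi_k=0$ there. Also $(u\cdot\nabla\theta)_k=\sum_l\big(-\de_y\phi_{k-l}\,il\theta_l+i(k-l)\phi_{k-l}\de_y\theta_l\big)$ vanishes on $\de D$, since each $\theta_l$ and each $\phi_{k-l}$ does. Evaluating the equation on $\de D$ then gives $\Delta_k\theta_k=0$, so $\de_{yy}\theta_k=0$ on $\de D$. These facts kill every boundary term below.

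\textbf{Identities \eqref{4.2} and \eqref{4.5}.} For \eqref{4.2}, pair the equation with $\theta_k$. The transport term $\RR\l\theta_k,iky^2\theta_k\r$ vanishes. Using $\theta_k|_{\de D}=0$,
\[
-\mu\RR\l\theta_k,\Delta_k\theta_k\r=\mu\|\nabla_k\theta_k\|^2 .
\]
For \eqref{4.5}, pair with $y^2\theta_k$. The transport term again drops. For the dissipation,
\[
-\mu\RR\l y^2\theta_k,\Delta_k\theta_k\r=\mu\|y\nabla_k\theta_k\|^2+2\mu\RR\l y\theta_k,\de_y\theta_k\r ,
\]
and $2\RR\l y\theta_k,\de_y\theta_k\r=\int y\,\de_y|\theta_k|^2=-\|\theta_k\|^2$. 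This produces the $\mu\|\theta_k\|^2$ on the right of \eqref{4.5}.

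\textbf{Identity \eqref{4.3}.} Differentiate the equation in $y$:
\[
\de_t\de_y\theta_k+iky^2\de_y\theta_k+2iky\theta_k-\mu\Delta_k\de_y\theta_k+\de_y(u\cdot\nabla\theta)_k=0 .
\]
Pair this with $\de_y\theta_k$. The $iky^2$ term drops, and the commutator term $2\RR\l\de_y\theta_k,iky\theta_k\r$ stays on the left. For the dissipation, integrating by parts leaves $\mu\|\de_y\nabla_k\theta_k\|^2$ plus a boundary term $\mu\de_y\theta_k\overline{\de_{yy}\theta_k}|_{\de D}$. This boundary term vanishes by $\de_{yy}\theta_k|_{\de D}=0$.

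\textbf{Identity \eqref{4.4}.} This is the one step I expect to be the main obstacle, since it needs careful sign and conjugation bookkeeping. First,
\[
\ddt\RR\l iky\theta_k,\de_y\theta_k\r=\RR\l iky\de_t\theta_k,\de_y\theta_k\r+\RR\l iky\theta_k,\de_y\de_t\theta_k\r .
\]
In the second term, move $\de_y$ onto $iky\theta_k$; the boundary term vanishes because $\theta_k=0$ on $\de D$. The result is that
\[
\ddt\RR\l iky\theta_k,\de_y\theta_k\r=-2\RR\l iky\de_y\theta_k,\de_t\theta_k\r-\RR\l ik\theta_k,\de_t\theta_k\r ,
\]
up to conjugation, which is harmless under $\RR$. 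Now substitute $\de_t\theta_k$ from the equation, treating the three pieces as follows.
\begin{itemize}
\item Transport piece: $\RR\l iky\de_y\theta_k,iky^2\theta_k\r=\tfrac{k^2}{2}\int y^3\de_y|\theta_k|^2=-\tfrac32k^2\|y\theta_k\|^2$, and $\RR\l ik\theta_k,iky^2\theta_k\r=k^2\|y\theta_k\|^2$. Combined with the prefactors $-2$ and $-1$ and the sign of $-iky^2\theta_k$, these give $-3k^2\|y\theta_k\|^2+k^2\|y\theta_k\|^2=-2k^2\|y\theta_k\|^2$, which moves to the left as $+2|k|^2\|y\theta_k\|^2$.
\item Viscous piece: $\RR\l ik\theta_k,\Delta_k\theta_k\r=\RR\big(-ik\|\nabla_k\theta_k\|^2\big)=0$. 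Only $-2\mu\RR\l iky\de_y\theta_k,\Delta_k\theta_k\r$ survives.
\item Nonlinear piece: this yields the last two terms of \eqref{4.4} with the signs shown.
\end{itemize}
I will recheck each sign against \eqref{4.4} once the inner-product convention is fixed. Throughout, the decay assumptions ensure that $y^3|\theta_k|^2\to0$ at infinity in the unbounded cases, so the integration by parts in the transport piece is justified.
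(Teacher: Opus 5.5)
Your proposal is correct and follows essentially the same route as the paper. You test the $\theta_k$-equation against $\theta_k$ and $y^2\theta_k$, and for \eqref{4.4} you rewrite $\ddt\RR\l iky\theta_k,\de_y\theta_k\r$ as $-2\RR\l iky\de_y\theta_k,\de_t\theta_k\r-\RR\l ik\theta_k,\de_t\theta_k\r$ before substituting the equation, just as the paper does.

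There are two small differences, neither of which matters. For \eqref{4.3} you differentiate the equation in $y$ and use $\de_{yy}\theta_k|_{\de D}=0$, whereas the paper tests with $-\de_{yy}\theta_k$. You also check explicitly that $\mu\RR\l ik\theta_k,\Delta_k\theta_k\r=0$, a term the paper drops without comment.
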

\begin{proof}
Taking the inner product of system \eqref{4.1}$_2$ with $\theta_k$ and $-\de_{yy}\theta_k$ in $L^2$ respectively, and integrating the resulting equation by parts, we can obtain \eqref{4.2}--\eqref{4.3}. 
For \eqref{4.4}, we use \eqref{4.1}$_2$ to compute
\begin{align}
\ddt\RR\l iky\theta_k,\de_y\theta_k\r=&\RR\l iky\de_{t} \theta_k,\de_y\theta_k\r+\RR\l iky\theta_k,\de_{yt}\theta_k\r=-2\RR\l iky\de_{y}\theta_k,\de_{t} \theta_k\r-\RR\l ik\theta_k,\de_{t}\theta_k\r\n\\ 
=&\RR\l 2iky\de_{y}\theta_k+ik\theta_k, iky^2\theta_k -\mu \Delta_k\theta_k+(u\cdot \nabla \theta)_k\r   \n\\
=&-\mu\RR\l 2iky\de_{y}\theta_k+ik\theta_k, \Delta_k \theta_k\r+\RR\l 2iky\de_{y}\theta_k+ik\theta_k,(u\cdot \nabla \theta)_k\r   \n\\
&+\RR\l 2iky\de_{y}\theta_k+ik\theta_k, iky^2\theta_k\r .
\end{align}
Notice that
\begin{align}
\RR\l 2iky\de_{y}\theta_k+ik\theta_k, iky^2\theta_k\r=2\RR\l iky^3\de_{y}\theta_k, ik\theta_k\r+\RR\l ik\theta_k, iky^2\theta_k\r=-2|k|^2\| y\theta_k\|^2,\notag	
\end{align}
then we have \eqref{4.4}. For \eqref{4.5}, by direct calculation, we have
\begin{align}
\frac12\ddt\| y\theta_k\|^2
&=\RR\l y\theta_k, y(\mu\Delta_k \theta_k-iky^2\theta_k -(u\cdot \nabla \theta)_k)\r\notag\\
&=\mu\l y\theta_k, y \Delta_k \theta_k\r -\RR\l y\theta_k, y(u\cdot \nabla \theta)_k)\r \notag\\
&=\mu\|\theta_k\|^2-\mu\|y\nabla_k\theta_k\|^2-\RR\l y\theta_k, y(u\cdot \nabla \theta)_k)\r,\n
 \end{align}
hence we have \eqref{4.5}.
The proof of Lemma \ref{l4.1} is over.
\end{proof}

With the Lemma \ref{l4.1} in hand, we are ready to prove \eqref{2.17} in Propositions \ref{p2} and \ref{p4}.

{\bf Proof of \eqref{2.17}:} 
Recalling  the definition of  $E_{\theta}[\theta_k]$ in \eqref{2.7}, and combining with Lemma \ref{l4.1}, we can deduce
\begin{align}\label{4.6}
&\ddt E_{\theta}[\theta_k] +\mu\|\nabla_k\theta_k\|^2+2^{-7}\mu^{\fr32}|k|^{-\fr12}\|\de_y\nabla_k\theta_k\|^2+2^{-10}|k|\| y\theta_k\|^2+2^{-9}\mu^{\fr12}|k|^{\fr12}\|y\nabla_k\theta_k\|^2\n\\
=&2^{-9}\mu^{\fr12}|k|^{\fr12}\|\theta_k\|^2-2^{-6}\mu^{\frac12}|k|^{-\frac12}\RR\l iky\theta_k,\de_y\theta_k\r-2^{-10}\mu|k|^{-1}\RR\l \Delta_k \theta_k, iky\de_{y}\theta_k\r\n\\ 
&-\RR\l\theta_k,(u\cdot \nabla \theta)_k\r-2^{-7}\mu^{\frac12}|k|^{-\frac12}\RR\l\de_y\theta_k,\de_y(u\cdot \nabla \theta)_k\r +2^{-10}|k|^{-1}\RR\l iky\de_{y}\theta_k,(u\cdot \nabla \theta)_k\r\n\\
&+2^{-11}|k|^{-1}\RR\l ik\theta_k, (u\cdot \nabla \theta)_k\r-2^{-9}\mu^{-\fr12}|k|^{\fr12}\RR\l y^2\theta_k,(u\cdot \nabla \theta)_k\r.
\end{align}
We first analyze the linear term on the right side of \eqref{4.6}.
In order to obtain the $L^2$ norm of $\theta$ on the left hand side of \eqref{4.6}, an integration by parts yields
\begin{equation}\n
 \l y \theta_k, \de_y\theta_k\r= -\frac12  \|\theta_k\|^2.
\end{equation}
It means that for every $\sigma>0$,
\begin{equation}\n
\norm{\theta_k}^2\leq 2 \norm{\de_y\theta_k}\norm{y \theta}\leq \sigma \norm{\de_y\theta_k}^2 +\frac{1}{\sigma}\norm{y \theta_k}^2.
\end{equation}
By taking $\sigma=2^{-\frac{13}{2}}\mu^{\frac12}|k|^{-\frac12}$, we conclude that
\begin{equation}\label{4.7}
 2^{-\frac{13}{2}}\mu^{\frac12}|k|^{\frac12}\norm{\theta_k}^2\leq \frac{\mu}{4}\norm{\de_y\theta_k}^2+2^{-11}|k|\norm{y\theta_k}^2.
\end{equation}
Then, through direct calculation, the second and third terms on the right side of \eqref{4.6} can be estimated as
\begin{align}\label{4.8}
-2^{-6}\mu^{\frac12}|k|^{-\frac12}\RR\l iky\theta_k,\de_y\theta_k\r\leq \frac{\mu}{4} \|\nabla_k \theta_k\|^2+ 2^{-12} |k|\|y\theta_k\|^2,
\end{align}
\begin{align}\label{4.9}
-2^{-10}\mu|k|^{-1}\RR\l \Delta_k \theta_k, iky\de_{y}\theta_k\r=&2\bar\beta\mu\RR\l y|k|^2 \theta_k, ik\de_{y}\theta_k\r-2\bar\beta\mu\RR\l \de_{yy} \theta_k, iky\de_{y}\theta_k\r\n\\ 
\leq& -2^{-9}\mu\|\de_{y} \nabla_k\theta_k\|\|y\nabla_k\theta_k\|  \n\\
\leq&2^{-9}\mu^{\frac32}|k|^{-\frac12}\|\de_{y} \nabla_k\theta_k\|^2  +2^{-11}\mu^{\fr12}|k|^{\fr12}\|y\nabla_k\theta_k\|^2.
\end{align}
Substituting \eqref{4.7}--\eqref{4.9} into \eqref{4.6}, and multiplying both sides of the resulting equation by ${\rm e}^{2\delta_0 \kappa^{\fr12}t}$, after performing some integration by parts, we obtain
\begin{align}\label{4.10}
&\ddt \mathcal{E}_{\theta,\neq} +2^{-12}\mathcal{D}_{\theta,\neq}-2\delta_0\kappa^{\fr12 }\mathcal{E}_{\theta,\neq}\n\\
\leq&-{\rm e}^{2\delta_0 \kappa^{\fr12}t}\sum_{k \neq 0}|k|^{2m}\RR\l\theta_k,(u\cdot \nabla \theta)_k\r-2^{-7}\mu^{\frac12}{\rm e}^{2\delta_0 \kappa^{\fr12}t}\sum_{k \neq 0}|k|^{2m-\fr12}\RR\l\de_y\theta_k,\de_y(u\cdot \nabla \theta)_k\r \n\\ 
&+2^{-10}{\rm e}^{2\delta_0 \kappa^{\fr12}t}\sum_{k \neq 0}|k|^{2m-1}\RR\l iky\de_{y}\theta_k,(u\cdot \nabla \theta)_k\r+2^{-11}{\rm e}^{2\delta_0 \kappa^{\fr12}t}\sum_{k \neq 0}|k|^{2m-1}\RR\l ik\theta_k, (u\cdot \nabla \theta)_k\r\n\\
&-2^{-9}\mu^{-\fr12}{\rm e}^{2\delta_0 \kappa^{\fr12}t}\sum_{k \neq 0}|k|^{2m+\fr12}\RR\l y^2\theta_k,(u\cdot \nabla \theta)_k\r:=\sum_{i=1}^5I_i.
\end{align}
Firstly, notice that if $\delta_0\leq 2^{-16}$, 
\begin{equation}\label{4.45}
 2\delta_0\kappa^{\fr12 }\mathcal{E}_{\theta,\neq}\leq 2^{-14}\mathcal{D}_{\theta,\neq}.   
\end{equation}
Next, we estimate the terms on the right-hand side of \eqref{4.10} one by one.

{\it \underline{Estimate of $I_1$:}}
We can rewrite it as follows
\begin{align}\label{I1}
I_1=&{\rm e}^{2\delta_0 \kappa^{\fr12}t}\sum_{k \neq 0}|k|^{2m}\RR\langle \theta_k, \sum_{l \in \Z} \partial_y \phi_{k-l} \cdot il  \theta_{l}- \sum_{l \in \Z} i(k-l)\phi_{k-l} \cdot \partial_{y} \theta_{l}  \rangle\n\\
=&{\rm e}^{2\delta_0 \kappa^{\fr12}t}\sum_{k \neq 0}|k|^{2m} \RR\langle \theta_k, \partial_{y} \phi_{0} \cdot ik \theta_k \rangle -{\rm e}^{2\delta_0 \kappa^{\fr12}t}\sum_{k \neq 0}|k|^{2m} \RR\langle \theta_k,  ik \phi_{k} \cdot  \partial_{y} \theta_0 \rangle\n\\
&-{\rm e}^{2\delta_0 \kappa^{\fr12}t}\sum_{\substack{ k,l \neq 0 \\ k \neq l }} |k|^{2m} \RR\langle \theta_k,  \nabla^\perp_{k-l} \phi_{k-l} \cdot \nabla_l \theta_l \rangle\n\\
 :=& I_1^1+I_1^2+I_1^3.	
\end{align}
For the term $ I_1^1$, one can directly obtain
		\begin{align}\label{i11}
			 I_1^1&={\rm e}^{2\delta_0 \kappa^{\fr12}t}\sum_{k \neq 0}|k|^{2m} \RR \int_{\mathbb{R}}  \theta_k\cdot \overline{ ik\theta_k \partial_{y} \phi_{0}} \mathrm{d} y\n\\
			  &= -{\rm e}^{2\delta_0 \kappa^{\fr12}t}\sum_{k \neq 0}|k|^{2m} \RR \int_{\mathbb{R}}   ik |\theta_k|^2 \cdot \partial_{y} \phi_{0} \mathrm{d} y  =0.
		\end{align}
		For $I_1^2$, applying H\"older inequality and definitions \eqref{2.2}--\eqref{2.13} yields
		\begin{align}\label{i12}
			I_1^2&\lesssim{\rm e}^{2\delta_0 \kappa^{\fr12}t}\sum_{k \neq 0}(|k|^{m} \|\theta_k\|) (|k|^{m+1}\|\phi_k\|_{L^\infty}) \| \partial_{y} \theta_0\|\n\\
			&\lesssim \mu^{-\frac{1}{2}}{\rm e}^{2\delta_0 \kappa^{\fr12}t}\left(\sum_{k \neq 0} |k|^{2m} \|\theta_k\|^2\right)^{\fr12}  \left(\sum_{k \neq 0}|k|^{2m+2} \|\phi_k\|_{L^\infty}^2\right)^{\fr12}\left(\mu\| \partial_{y} \theta_0\|^2\right)^{\fr12}\n\\
			& \lesssim \mu^{-\frac{1}{2}} \mathcal{E}^{\frac{1}{2}}_{\theta,\neq} \mathcal{D}^{\frac{1}{2}}_{\omega, \neq}\mathcal{D}^{\frac{1}{2}}_{\theta, 0} ,
		\end{align}
		where we have used 
		\begin{align}\label{psi}
		{\rm e}^{2\delta_0 \kappa^{\fr12}t}\sum_{k\neq 0} |k|^{2m+2} \|\phi_{k} \|^2_{L^\infty} \lesssim& {\rm e}^{2\delta_0 \kappa^{\fr12}t}\sum_{k\neq 0}   (|k|^{m+\fr32} \|\phi_k\|) (|k|^{m+\fr12} \|\partial_{y}\phi_k\|) \n\\ 
		\lesssim&{\rm e}^{2\delta_0 \kappa^{\fr12}t}\left(\sum_{k\neq 0}|k|^{2m+3} \|\phi_k\|^2\right)^{\fr12} \left(\sum_{k\neq 0}(|k|^{2m+1} \|\partial_{y}\phi_k\|^2\right)^{\fr12}\n\\ 
		\lesssim& \mathcal{D}_{\omega, \neq}.
		\end{align}
For $ I_1^3$, we need to decompose the frequency $k$. First, denote 
\begin{align}\label{A1}
	&A_1(k,l):=\left\{  (k,l) \in \Z\setminus\{0\}\ \text{and} \ k \neq l~\big| |k-l|\leq \frac{|l|}{2}\   \right\},\n\\ 
&A_2(k,l):=\left\{(k,l) \in \Z\setminus\{0\}\ \text{and} \ k \neq l~\big| |k-l|> \frac{|l|}{2}\ \right\}.
\end{align}
We define the following two characteristic functions:
 		\begin{align*}
 			\chi_1(k,l)= \begin{cases}1, & \text { if } (k,l) \in A_1 \\ 0, & \text { if } (k,l) \in A_2\end{cases} \quad \text{and} \quad
 			\chi_2(k,l)= \begin{cases}0, & \text { if } (k,l) \in A_1 \\ 1, & \text { if } (k,l) \in A_2\end{cases}.
 		\end{align*}
 Notice that for $(k,l) \in A_1$, we have $|k|\leq |k-l|+|l|\leq \fr32 |l|$ and $|l|\leq |k-l|+|k|\leq \frac{|l|}{2}+|k|$, that is $|k|\approx|l|$; and for $(k,l) \in A_2$, we have $|l|< 2|k-l|$ and $|k|\leq |k-l|+|l|< 3|k-l|$. 
\begin{align}\label{i13}
	I_1^3=&{\rm e}^{2\delta_0 \kappa^{\fr12}t}\sum_{\substack{ k,l \neq 0 \\ k \neq l }} |k|^{2m} \RR\langle \theta_k, \partial_y \phi_{k-l} \cdot il  \theta_{l}\rangle-{\rm e}^{2\delta_0 \kappa^{\fr12}t}\sum_{\substack{ k,l \neq 0 \\ k \neq l }} |k|^{2m} \RR\langle \theta_k, i(k-l)\phi_{k-l} \cdot \partial_{y} \theta_{l}  \rangle\n\\
	=&-{\rm e}^{2\delta_0 \kappa^{\fr12}t}\sum_{\substack{ k,l \neq 0 \\ k \neq l }} |k|^{2m} \RR\langle \theta_k, \de_y(i(k-l)\phi_{k-l} \cdot \theta_{l})  \rangle+{\rm e}^{2\delta_0 \kappa^{\fr12}t}\sum_{\substack{ k,l \neq 0 \\ k \neq l }} |k|^{2m} \RR\langle \theta_k, ik\partial_y \phi_{k-l} \cdot  \theta_{l}\rangle\n\\ 
	:=&I_1^{3,1}+I_1^{3,2}.
\end{align}
We use the two characteristic functions $\chi_1$ and $\chi_2$ defined in \eqref{A1} to
 decompose $I_1^{3,1}$ into two parts,  and then apply \eqref{psi} and $m>\fr12$ to get
\begin{align}\label{i131}
	I_1^{3,1} &= {\rm e}^{2\delta_0 \kappa^{\fr12}t}\sum_{\substack{ k,l \neq 0 \\ k \neq l }} |k|^{2m} \RR\langle \partial_y\theta_k,   i(k-l)\phi_{k-l} \cdot  \theta_l \rangle  \n\\
	&\lesssim{\rm e}^{2\delta_0 \kappa^{\fr12}t}\sum_{\substack{ k,l \neq 0 \\ k \neq l }} |k|^{2m}\left(\chi_1(k,l)+\chi_2(k,l)\right)\|\partial_y\theta_k\|\|(k-l)\phi_{k-l}\|_{L^\infty} \|\theta_l \|\n\\ 
&\lesssim {\rm e}^{2\delta_0 \kappa^{\fr12}t}\sum_{\substack{k,l \neq 0 \\ k \neq l}} |k|^{m}\big( |l|^m+|k-l|^m \big) \|\partial_{y}\theta_k\|\| (k-l) \phi_{k-l}\|_{L^\infty} \| \theta_l\|\n\\
&\lesssim {\rm e}^{2\delta_0 \kappa^{\fr12}t}\Big( \sum_{k\neq 0} |k|^{2m} \|\partial_{y} \theta_k \|^2  \Big)^{\frac{1}{2}}\Big( \sum_{k\neq 0} |k|^{2m} \|\theta_{k} \|^2 \Big)^{\frac{1}{2}} \Big( \sum_{k\neq 0} |k| \|\phi_{k} \|_{L^\infty}  \Big)\n\\
			& \quad + {\rm e}^{2\delta_0 \kappa^{\fr12}t}\Big( \sum_{k\neq 0} |k|^{2m} \|\partial_{y} \theta_k \|^2  \Big)^{\frac{1}{2}}\Big( \sum_{k\neq 0} |k|^{2m+2} \|\phi_{k} \|^2_{L^\infty}  \Big)^{\frac{1}{2}} \Big( \sum_{k\neq 0} \|\theta_{k} \|  \Big)\n\\
&\lesssim {\rm e}^{2\delta_0 \kappa^{\fr12}t}\Big( \sum_{k\neq 0} |k|^{2m} \|\partial_{y} \theta_k \|^2  \Big)^{\frac{1}{2}}\Big( \sum_{k\neq 0} |k|^{2m} \|\theta_{k} \|^2 \Big)^{\frac{1}{2}} \Big( \sum_{k\neq 0} |k|^{2m+2} \|\phi_{k} \|^2_{L^\infty}  \Big)^{\frac{1}{2}}  \Big( \sum_{k\neq 0} |k|^{-2m}   \Big)^{\frac{1}{2}}\n\\
			& \quad + {\rm e}^{2\delta_0 \kappa^{\fr12}t}\Big( \sum_{k\neq 0} |k|^{2m} \|\partial_{y} \theta_k \|^2  \Big)^{\frac{1}{2}}\Big( \sum_{k\neq 0} |k|^{2m+2} \|\phi_{k} \|^2_{L^\infty}  \Big)^{\frac{1}{2}} \Big( \sum_{k\neq 0} |k|^{2m}\|\theta_{k} \|^2 \Big)^{\frac{1}{2}}\Big( \sum_{k\neq 0} |k|^{-2m}   \Big)^{\frac{1}{2}}\n\\
			& \lesssim  \mu^{-\frac{1}{2}}\mathcal{E}_{\theta,\neq}^{\frac{1}{2}} \mathcal{D}_{\theta, \neq}^{\frac{1}{2}} \mathcal{D}_{\omega, \neq}^{\frac{1}{2}}.
 		\end{align}
Similar to $I_1^{3,1}$, we also decompose $I_1^{3,2}$ into two parts, namely
 \begin{align}\label{i132}
	I_1^{3,2} &\lesssim {\rm e}^{2\delta_0 \kappa^{\fr12}t}\sum_{\substack{ k,l \neq 0 \\ k \neq l }}|k|^{2m+1} \| \theta_{k}\|_{L^2} \| \partial_{y} \phi_{k-l}\|_{L^\infty} \|\theta_{l} \|_{L^2}\n\\
			&\lesssim {\rm e}^{2\delta_0 \kappa^{\fr12}t}\sum_{\substack{ k,l \neq 0 \\ k \neq l }}|k|^{2m+1} \Big( \chi_1(k,l) + \chi_2(k,l) \Big)|k|^{2m+1} \| \theta_{k}\|_{L^2} \| \partial_{y} \phi_{k-l}\|_{L^\infty} \|\theta_{l} \|_{L^2}\n\\
			&:= I_1^{3,2,\chi_1} + I_1^{3,2,\chi_2}.
		\end{align}
		If $ (k,l)\in A_1$, then $ |k|\approx |l|$. By direct calculation, we get
		\begin{align}\label{i1321}
		I_1^{3,2,\chi_1} &\lesssim {\rm e}^{2\delta_0 \kappa^{\fr12}t}\sum_{\substack{ k,l \neq 0 \\ k \neq l }}(|k|^{m+\frac{1}{4}} \|\theta_{k} \|) (|l|^{m+\frac{3}{4}} \|\theta_{l} \|) \| \partial_{y} \phi_{k-l}\|_{L^\infty} \n\\
			&\lesssim \mu^{-\frac{1}{4}}{\rm e}^{\delta_0 \kappa^{\fr12}t}\mathcal{D}_{\theta,\neq}^{\frac{1}{2}}\Big( \sum_{k\neq 0} |k|^{2m+\frac{3}{2}} \| \theta_{k}\|^2\Big)^{\frac{1}{2}} \sum_{k\neq 0}\| \partial_{y} \phi_{k}\|_{L^\infty}\n\\
	&\lesssim \mu^{-\frac{1}{4}}{\rm e}^{\delta_0 \kappa^{\fr12}t}\mathcal{D}_{\theta,\neq}^{\frac{1}{2}}\Big( \sum_{k\neq 0} (|k|^{2m+\frac{1}{2}} \| \theta_{k}\|^2)^{\fr13}(|k|^{2m+2} \| \theta_{k}\|^2)^{\fr23}\Big)^{\frac{1}{2}} \Big( \sum_{k\neq 0} |k|^{2m}\| \partial_{y} \phi_{k}\|_{L^\infty}^2\Big)^{\frac{1}{2}}\Big( \sum_{k\neq 0} |k|^{-2m}\Big)^{\frac{1}{2}}\n\\
	&\lesssim \mu^{-\frac{1}{4}}{\rm e}^{\delta_0 \kappa^{\fr12}t}\mathcal{D}_{\theta,\neq}^{\frac{1}{2}}\Big( \sum_{k\neq 0}|k|^{2m+\frac{1}{2}} \| \theta_{k}\|^2\Big)^{\frac{1}{6}}\Big( \sum_{k\neq 0}|k|^{2m+2} \| \theta_{k}\|^2\Big)^{\frac{1}{3}} \Big( \sum_{k\neq 0} |k|^{2m}\|\om_{k}\|^2\Big)^{\frac{1}{2}}\n\\
			& \lesssim \mu^{-\frac{2}{3}} \mathcal{D}_{\theta,\neq} \mathcal{E}^{\frac{1}{2}}_{\omega,\neq}.
		\end{align}
			If $ (k,l)\in A_2$, then $ |k|\lesssim |k-l|+ |l|\lesssim |k-l|$. We can estimate
		\begin{align}\label{i1322}
		I_1^{3,2,\chi_2} &\lesssim {\rm e}^{2\delta_0 \kappa^{\fr12}t}\sum_{\substack{ k,l \neq 0 \\ k \neq l }}(|k|^{m+\frac{1}{4}} \|\theta_{k} \|) (|k-l|^{m+\frac{3}{4}} \| \partial_{y} \phi_{k-l}\|_{L^\infty}  \|\theta_{l} \| \n\\
			&\lesssim \mu^{-\frac{1}{4}}{\rm e}^{\delta_0 \kappa^{\fr12}t}\mathcal{D}_{\theta,\neq}^{\frac{1}{2}}\Big( \sum_{k\neq 0} |k|^{2m+\frac{1}{2}} \||k| \partial_{y} \phi_{k}\|\| \partial_{yy} \phi_{k}\| \Big)^{\frac{1}{2}} \Big( \sum_{k\neq 0} \| \theta_{k}\|\Big)\n\\
			&\lesssim \mu^{-\frac{1}{4}}{\rm e}^{\delta_0 \kappa^{\fr12}t}\mathcal{D}_{\theta,\neq}^{\frac{1}{2}}\Big( \sum_{k\neq 0} |k|^{2m+\frac{1}{2}} \| \om_{k}\|^2 \Big)^{\frac{1}{2}} \Big( \sum_{k\neq 0} |k|^{2m}\| \theta_{k}\|^2\Big)\n\\
			& \lesssim \nu^{-\frac{1}{4}} \mu^{-\frac{1}{4}}\mathcal{E}^{\frac{1}{2}}_{\theta,\neq} \mathcal{D}_{\theta,\neq}^{\frac{1}{2}} \mathcal{D}_{\omega,\neq}^{\frac{1}{2}}.
		\end{align}
Combining \eqref{I1}--\eqref{i12} and \eqref{i13}--\eqref{i1322}, we arrive at
\begin{align}\label{i1}
	I_1\lesssim\mu^{-\frac{1}{2}} \mathcal{E}^{\frac{1}{2}}_{\theta,\neq} \mathcal{D}^{\frac{1}{2}}_{\omega, \neq}\mathcal{D}^{\frac{1}{2}}_{\theta, 0}+(\mu^{-\frac{1}{2}}+\nu^{-\frac{1}{4}} \mu^{-\frac{1}{4}})\mathcal{E}_{\theta,\neq}^{\frac{1}{2}}  \mathcal{D}_{\omega, \neq}^{\frac{1}{2}}\mathcal{D}_{\theta, \neq}^{\frac{1}{2}}+\mu^{-\frac{2}{3}}\mathcal{E}^{\frac{1}{2}}_{\omega,\neq} \mathcal{D}_{\theta,\neq}.
\end{align}
{\it \underline{Estimate of $I_2$:}} Notice that
\begin{align}\label{I2}
	I_2&= 2^{-7}\mu^{\frac12}{\rm e}^{2\delta_0 \kappa^{\fr12}t}\sum_{k \neq 0}|k|^{2m-\fr12}\RR\langle \partial_{y} \theta_k, \partial_{y}\Big(\sum_{l \in \Z} \partial_{y} \phi_{k-l} \cdot il \theta_l\Big) \rangle \n\\
			&\quad - 2^{-7}\mu^{\frac12}{\rm e}^{2\delta_0 \kappa^{\fr12}t}\sum_{k \neq 0}|k|^{2m-\fr12}\RR\langle \partial_{y} \theta_k, \partial_{y} \Big(\sum_{l \in \Z} i(k-l) \phi_{k-l} \cdot \partial_{y} \theta_l \Big)\rangle\n\\
			& =2^{-7}\mu^{\frac12}{\rm e}^{2\delta_0 \kappa^{\fr12}t}\sum_{k \neq 0}|k|^{2m-\fr12}\big(\RR\langle \partial_{y} \theta_k, \partial_{y}\big(\partial_{y} \phi_{0} \cdot ik \theta_k\big) \rangle - \RR \langle \partial_{y} \theta_k,  \partial_{y} \big(ik \phi_{k} \cdot  \partial_{y} \theta_0\big)\rangle \big)   \n\\
			&\quad -2^{-7}\mu^{\frac12}{\rm e}^{2\delta_0 \kappa^{\fr12}t}\sum_{\substack{ k,l \neq 0 \\ k \neq l }}|k|^{2m-\fr12}\RR\langle \partial_{y} \theta_k,  \partial_{y} \big(\nabla^\bot_{k-l} \phi_{k-l} \cdot \nabla_l \theta_l\big) \rangle   \n\\
			& := I_2^1+I_2^2+I_2^3.	
\end{align}
For $k=l$, using H\"older inequality and definitions \eqref{2.2}--\eqref{2.13} gives
\begin{align}\label{i21}
I_2^1=&2^{-7}\mu^{\frac12}{\rm e}^{2\delta_0 \kappa^{\fr12}t}\sum_{k \neq 0}|k|^{2m-\fr12}\RR\langle \partial_{y} \theta_k, \partial_{yy} \phi_{0} \cdot ik \theta_k+\partial_{y} \phi_{0} \cdot ik \partial_{y}\theta_k\rangle\n\\ 
=&2^{-7}\mu^{\frac12}{\rm e}^{2\delta_0 \kappa^{\fr12}t}\sum_{k \neq 0}|k|^{2m-\fr12}\RR\langle \partial_{y} \theta_k, \partial_{yy} \phi_{0} \cdot ik \theta_k\rangle\n\\ 
\lesssim&\mu^{\frac12}{\rm e}^{2\delta_0 \kappa^{\fr12}t}\sum_{k \neq 0}(|k|^{m}\|\partial_{y} \theta_k\|)(|k|^{m+\fr12}\|\theta_k\|)\|\partial_{yy} \phi_{0}\|_{L^\infty}\n\\
\lesssim&{\rm e}^{\delta_0 \kappa^{\fr12}t}\mathcal{D}_{\theta,\neq}^{\fr12}\left(\sum_{k \neq 0}(|k|^{2m+\fr12}\|\theta_k\|^2)^{\fr23}(|k|^{2m+2}\|\theta_k\|^2)^{\fr13}\right)^{\fr12}\|\om_{0}\|^{\fr12}\|\de_y\om_{0}\|^{\fr12}\n\\
\lesssim&\nu^{-\frac18}\mu^{-\frac13}\mathcal{E}_{\om,0}^{\fr12}\mathcal{D}_{\theta,\neq}.	
\end{align}
For $l=0$, by \eqref{psi}, one has
\begin{align}\label{i22}
I_2^2=&2^{-7}\mu^{\frac12}{\rm e}^{2\delta_0 \kappa^{\fr12}t}\sum_{k \neq 0}|k|^{2m+\fr12}\RR \langle \partial_{yy} \theta_k, i\phi_{k} \cdot  \partial_{y} \theta_0\rangle   \n\\
\lesssim&\mu^{\frac12}{\rm e}^{2\delta_0 \kappa^{\fr12}t}\sum_{k \neq 0}(|k|^{m-\frac{1}{4}} \| \partial_{yy} \theta_k\|)(|k|^{m+\frac{3}{4}} \| \phi_{k}\|_{L^\infty}) \| \partial_{y} \theta_0\| \n\\
			\lesssim&\mu^{-\frac12}{\rm e}^{\delta_0 \kappa^{\fr12}t}\mathcal{E}_{\om,0}^{\fr12}\mathcal{D}_{\theta,\neq}^{\fr12}\left(\sum_{k \neq 0}|k|^{2m+\frac{3}{2}} \| \phi_{k}\|_{L^\infty}^2\right)^{\frac{1}{2}}\lesssim\mu^{-\frac12}\mathcal{E}_{\theta,0}^{\fr12}\mathcal{D}_{\theta,\neq}^{\fr12}\mathcal{D}_{\om,\neq}^{\fr12}.
\end{align}
Regarding $I_2^3$, it is noted that
\begin{align}\label{I23}
			I_2^3=& -2^{-7}\mu^{\frac12}{\rm e}^{2\delta_0 \kappa^{\fr12}t}\sum_{\substack{ k,l \neq 0 \\ k \neq l }}|k|^{2m-\fr12}\RR\langle \partial_{y} \theta_k,  \partial_{y} ( \partial_{y} \phi_{k-l} \cdot il \theta_l) -\partial_{y} (  i(k-l)\phi_{k-l} \cdot  \partial_{y}\theta_l) \rangle\n\\
			=&2^{-7}\mu^{\frac12}{\rm e}^{2\delta_0 \kappa^{\fr12}t}\sum_{\substack{ k,l \neq 0 \\ k \neq l }}|k|^{2m-\fr12}\RR\langle \partial_{yy} \theta_k,   \partial_{y} \phi_{k-l} \cdot il \theta_l \rangle \n\\
			&- 2^{-7}\mu^{\frac12}{\rm e}^{2\delta_0 \kappa^{\fr12}t}\sum_{\substack{ k,l \neq 0 \\ k \neq l }}|k|^{2m-\fr12}\left(\RR\langle \partial_{y} \theta_k,   i(k-l)\partial_{y}\phi_{k-l} \cdot  \partial_{y}\theta_l\rangle+ \RR \langle \partial_{y} \theta_k,   i(k-l)\phi_{k-l} \cdot  \partial_{yy}\theta_l\rangle\right)\n\\
			:=& I_2^{3,1}+I_2^{3,2}+I_2^{3,3}.
		\end{align}
By H\"older inequality, definitions \eqref{2.2}--\eqref{2.13} and \eqref{A1}, we can prove
\begin{align}\label{i231}
I_2^{3,1}\lesssim&\mu^{\frac12}{\rm e}^{2\delta_0 \kappa^{\fr12}t}\sum_{\substack{ k,l \neq 0 \\ k \neq l }}|k|^{2m-\fr12}\Big( \chi_1(k,l) + \chi_2(k,l) \Big)\| \partial_{yy} \theta_k\|\|  \theta_l\| \| \partial_{y} \phi_{k-l}\|_{L^\infty}\n\\
\lesssim&\mu^{\frac12}{\rm e}^{2\delta_0 \kappa^{\fr12}t}\sum_{\substack{ k,l \neq 0 \\ k \neq l }}(|k|^{m-\fr14}\| \partial_{yy} \theta_k\|)(|l|^{m+\fr34}\|  \theta_l\|) \| \partial_{y} \phi_{k-l}\|_{L^\infty}\n\\
&+\mu^{\frac12}{\rm e}^{2\delta_0 \kappa^{\fr12}t}\sum_{\substack{ k,l \neq 0 \\ k \neq l }}(|k|^{m-\fr14}\| \partial_{yy} \theta_k\|)(|l|^{\fr34}\|  \theta_l\|) (|k-l|^m\| \partial_{y} \phi_{k-l}\|_{L^\infty})\n\\
\lesssim&\mu^{-\frac14}{\rm e}^{\delta_0 \kappa^{\fr12}t}\mathcal{D}_{\theta,\neq}^{\fr12}\left(\sum_{k \neq 0}(|k|^{2m+\fr12}\|\theta_k\|^2)^{\fr13}(|k|^{2m+2}\|\theta_k\|^2)^{\fr23}\right)^{\fr12}	\left(\sum_{k \neq 0}|k|^{2m}\|\de_y\phi_k\|\|\de_{yy}\phi_k\|\right)^{\fr12}\n\\
\lesssim&\nu^{\frac18}\mu^{-\frac23}\mathcal{E}_{\om,\neq}^{\frac12}\mathcal{D}_{\theta,\neq},
\end{align}

 \begin{align}\label{i232}
I_2^{3,2}\lesssim&\mu^{\frac12}{\rm e}^{2\delta_0 \kappa^{\fr12}t}\sum_{\substack{ k,l \neq 0 \\ k \neq l }}|k|^{2m-\fr12}\Big( \chi_1(k,l) + \chi_2(k,l) \Big)\| \partial_{y} \theta_k\|\|\partial_{y} \theta_l\| |k-l|\| \partial_{y} \phi_{k-l}\|_{L^\infty}\n\\
\lesssim&\mu^{\frac12}{\rm e}^{2\delta_0 \kappa^{\fr12}t}\sum_{\substack{ k,l \neq 0 \\ k \neq l }}(|k|^{m}\| \partial_{y} \theta_k\|)(|l|^{m-\fr14}\|  \de_y\theta_l\|) (|k-l|^{\fr34}\| \partial_{y} \phi_{k-l}\|_{L^\infty})\n\\
&+\mu^{\frac12}{\rm e}^{2\delta_0 \kappa^{\fr12}t}\sum_{\substack{ k,l \neq 0 \\ k \neq l }}(|k|^{m}\| \partial_{y} \theta_k\|)\| \de_y \theta_l\| (|k-l|^{m+\fr12}\| \partial_{y} \phi_{k-l}\|_{L^\infty})\n\\
\lesssim&\mu^{-\frac14}\mathcal{E}_{\theta,\neq}^{\fr12}\mathcal{D}_{\theta,\neq}^{\fr12}\sum_{\substack{ k\neq 0}}|k|^{\fr34}\| \partial_{y} \phi_{k}\|_{L^\infty}+{\rm e}^{\delta_0 \kappa^{\fr12}t}\mathcal{D}_{\theta,\neq}^{\fr12}\left(\sum_{k \neq 0}|k|^{2m+1}\|\de_y\phi_k\|\|\de_{yy}\phi_k\|\right)^{\fr12}\sum_{\substack{ k\neq 0}}\| \partial_{y} \theta_k\|\n\\
\lesssim&\mu^{-\frac14}\mathcal{E}_{\theta,\neq}^{\fr12}\mathcal{D}_{\theta,\neq}^{\fr12}\left(\sum_{\substack{ k\neq 0}}(|k|^{m+1}\| \partial_{y} \phi_{k}\|)(|k|^{m+\fr12}\|\om_{k}\|)\right)^{\fr12}\n\\ 
&+{\rm e}^{\delta_0 \kappa^{\fr12}t}\mathcal{D}_{\theta,\neq}^{\fr12}\left(\sum_{k \neq 0}|k|^{2m}\|\om_k\|^2\right)^{\fr12}\left(\sum_{\substack{ k\neq 0}}|k|^{2m}\| \partial_{y} \theta_k\|^2\right)^{\fr12}\n\\
\lesssim&\nu^{-\frac14}\mu^{-\frac14}\mathcal{E}_{\theta,\neq}^{\frac12}\mathcal{D}_{\theta,\neq}^{\frac12}\mathcal{D}_{\om,\neq}^{\frac12}+\mu^{-\frac12}\mathcal{E}_{\theta,\neq}^{\frac12}\mathcal{D}_{\theta,\neq},
\end{align}
and
\begin{align}\label{i233}
I_2^{3,3}\lesssim&\mu^{\frac12}{\rm e}^{2\delta_0 \kappa^{\fr12}t}\sum_{\substack{ k,l \neq 0 \\ k \neq l }}|k|^{2m-\fr12}\Big( \chi_1(k,l) + \chi_2(k,l) \Big)\| \partial_{y} \theta_k\|\| \partial_{yy}\theta_l\| |k-l|\| \partial_{y} \phi_{k-l}\|_{L^\infty}\n\\
\lesssim&\mu^{\frac12}{\rm e}^{2\delta_0 \kappa^{\fr12}t}\sum_{\substack{ k,l \neq 0 \\ k \neq l }}(|k|^{m-\fr14}\| \partial_{y} \theta_k\|)(|l|^{m-\fr14}\| \partial_{yy}\theta_l\|) |k-l|\|\phi_{k-l}\|_{L^\infty}\n\\
&+\mu^{\frac12}{\rm e}^{2\delta_0 \kappa^{\fr12}t}\sum_{\substack{ k,l \neq 0 \\ k \neq l }}(|k|^{m-\fr14}\| \partial_{y} \theta_k\|)(|l|^{-\fr14}\|  \partial_{yy}\theta_l\|) (|k-l|^{m+1}\| \partial_{y} \phi_{k-l}\|_{L^\infty})\n\\
\lesssim&\mu^{\frac14}{\rm e}^{\delta_0 \kappa^{\fr12}t}\mathcal{E}_{\theta,\neq}^{\fr12}\left(\sum_{k \neq 0}(|k|^{2m-\fr12}\|\de_{yy}\theta_k\|^2\right)^{\fr12}\sum_{k \neq 0}|k|\|\phi_{k}\|_{L^\infty}\n\\ 
&+\mu^{\frac14}{\rm e}^{\delta_0 \kappa^{\fr12}t}\mathcal{E}_{\theta,\neq}^{\fr12}\left(\sum_{k \neq 0}|k|^{2m+2}\|\phi_{k}\|^2_{L^\infty}\right)^{\fr12}\sum_{k \neq 0}|k|^{-\fr14}\|\de_{yy}\theta_k\|\n\\
\lesssim&\mu^{\frac14}{\rm e}^{\delta_0 \kappa^{\fr12}t}\mathcal{E}_{\theta,\neq}^{\fr12}\left(\sum_{k \neq 0}(|k|^{2m-\fr12}\|\de_{yy}\theta_k\|^2\right)^{\fr12}\left(\sum_{k \neq 0}|k|^{2m+2}\|\phi_{k}\|^2_{L^\infty}\right)^{\fr12}\n\\
\lesssim&\mu^{-\frac12}\mathcal{E}_{\theta,\neq}^{\frac12}\mathcal{D}_{\theta,\neq}^{\frac12}\mathcal{D}_{\om,\neq}^{\frac12},
\end{align}
where we used that \eqref{psi} and $m>\fr12$.
By combining \eqref{I2}--\eqref{i233}, we have
\begin{align}\label{i2}
	I_2\lesssim& \nu^{-\frac18}\mu^{-\frac13}\mathcal{E}_{\om,0}^{\fr12}\mathcal{D}_{\theta,\neq}+\mu^{-\frac12}\mathcal{E}_{\theta,0}^{\fr12}\mathcal{D}_{\theta,\neq}^{\fr12}\mathcal{D}_{\om,\neq}^{\fr12}+\nu^{\frac18}\mu^{-\frac23}\mathcal{E}_{\om,\neq}^{\frac12}\mathcal{D}_{\theta,\neq}+\mu^{-\frac12}\mathcal{E}_{\theta,\neq}^{\frac12}\mathcal{D}_{\theta,\neq}\n\\ &+(\nu^{-\frac14}\mu^{-\frac14}+\mu^{-\frac12})\mathcal{E}_{\theta,\neq}^{\frac12}\mathcal{D}_{\theta,\neq}^{\frac12}\mathcal{D}_{\om,\neq}^{\frac12}.
\end{align}
{\it \underline{Estimate of $I_3$:}} We rewrite this term and divide it as follows
\begin{align}\label{I3}
I_3=&2^{-10}{\rm e}^{2\delta_0 \kappa^{\fr12}t}\sum_{k \neq 0}|k|^{2m-1}\RR\l iky\de_{y}\theta_k,-\sum_{l \in \Z} \partial_y \phi_{k-l} \cdot il  \theta_{l}+ \sum_{l \in \Z} i(k-l)\phi_{k-l} \cdot \partial_{y} \theta_{l}\r\n\\ 
=&2^{-10}{\rm e}^{2\delta_0 \kappa^{\fr12}t}\sum_{k \neq 0}|k|^{2m-1}\RR\l iky\de_{y}\theta_k, -\partial_{y} \phi_{0} \cdot ik \theta_k +ik \phi_{k} \cdot  \partial_{y} \theta_0\rangle\n\\ 
&+2^{-10}{\rm e}^{2\delta_0 \kappa^{\fr12}t}\sum_{k \neq 0}|k|^{2m-1}\RR\l iky\de_{y}\theta_k,  \nabla^\perp_{k-l} \phi_{k-l} \cdot \nabla_l \theta_l \rangle\n\\
 :=& I_3^1+I_3^2.		
\end{align}
By direct calculation we have 
\begin{align}\label{i31}
			I_3^1\lesssim&{\rm e}^{2\delta_0 \kappa^{\fr12}t}\sum_{k \neq 0}|k|^{m+\fr14}\|y\de_{y}\theta_k\|(|k|^{m+\fr34}\|\theta_k\|\|\partial_y \phi_{0}\|_{L^\infty}+|k|^{m+\fr34}\|\phi_k\|_{L^\infty}\| \partial_{y} \theta_0\|)\n\\
			\lesssim&\mu^{-\frac{1}{4}}{\rm e}^{\delta_0 \kappa^{\fr12}t}\mathcal{D}_{\theta,\neq}^{\fr12}\left(\sum_{k \neq 0}(|k|^{2m+\fr12}\|\theta_k\|^2)^{\fr13}(|k|^{2m+2}\|\theta_k\|^2)^{\fr23}\right)^{\fr12}\|\partial_y \phi_{0}\|_{L^\infty}\n\\ 
			&+\mu^{-\frac{1}{4}}{\rm e}^{\delta_0 \kappa^{\fr12}t}\mathcal{D}_{\theta,\neq}^{\fr12}\left(\sum_{k \neq 0}|k|^{2m+\fr12}\||k|\phi_k\|\|\de_y\phi_k\|\right)^{\fr12}\| \partial_{y} \theta_0\|\n\\
			\lesssim& \mu^{-\frac{2}{3}}\|u_{0}\|_{L^\infty}\mathcal{D}_{\theta,\neq}+\nu^{\frac{1}{4}}\mu^{-\frac{3}{4}} \mathcal{E}^{\frac{1}{2}}_{\om,\neq} \mathcal{D}^{\frac{1}{2}}_{\theta, \neq}\mathcal{D}^{\frac{1}{2}}_{\theta, 0}.
		\end{align}
	Applying \eqref{2.2}--\eqref{2.13}, \eqref{psi}, \eqref{A1} and $m>\fr12$, one has
	\begin{align}\label{i32}
		I_3^2\lesssim&{\rm e}^{2\delta_0 \kappa^{\fr12}t}\sum_{\substack{ k,l \neq 0 \\ k \neq l }}|k|^{2m}( \chi_1(k,l) + \chi_2(k,l) )\|\de_{y}\theta_k\|\|\partial_y \phi_{k-l}\|_{L^\infty}|l|\|y\theta_{l}\|\n\\ 
		&+{\rm e}^{2\delta_0 \kappa^{\fr12}t}\sum_{\substack{ k,l \neq 0 \\ k \neq l }}|k|^{2m}( \chi_1(k,l) + \chi_2(k,l) )\|y\de_{y}\theta_k\||k-l|\|\phi_{k-l}\|_{L^\infty} \|\partial_{y} \theta_{l}\|\n\\
		\lesssim&{\rm e}^{2\delta_0 \kappa^{\fr12}t}\sum_{\substack{ k,l \neq 0 \\ k \neq l }}(|k|^{m+\fr34}\|\de_{y}\theta_k\|)(|l|^{m+\fr14}\|y\theta_{l}\|)\|\partial_y \phi_{k-l}\|_{L^\infty}\n\\ 
		&+{\rm e}^{2\delta_0 \kappa^{\fr12}t}\sum_{\substack{ k,l \neq 0 \\ k \neq l }}(|k|^{m}\|\de_{y}\theta_k\|)(|l|^{\fr12}\|y\theta_{l}\|)(|k-l|^{m+\fr12}\|\partial_y \phi_{k-l}\|_{L^\infty})\n\\
		&+{\rm e}^{2\delta_0 \kappa^{\fr12}t}\sum_{\substack{ k,l \neq 0 \\ k \neq l }}(|k|^{m+\fr14}\|y\de_{y}\theta_k\|)(|l|^{m-\fr14}\|\partial_y \theta_{l}\|)\||k-l|\phi_{k-l}\|_{L^\infty}\n\\ 
		&+{\rm e}^{2\delta_0 \kappa^{\fr12}t}\sum_{\substack{ k,l \neq 0 \\ k \neq l }}(|k|^{m}\|y\de_{y}\theta_k\|)\|\partial_y \theta_{l}\|(|k-l|^{m+\fr34}\|\phi_{k-l}\|_{L^\infty})\n\\
		\lesssim&\mu^{-\fr12}\mathcal{E}_{\theta,\neq}^{\fr12}\mathcal{D}_{\theta,\neq}^{\fr12}\left(\sum_{\substack{ k \neq 0 }}|k|^{2m}\|\partial_y \phi_{k}\|\|\partial_{yy} \phi_{k}\|\right)^{\fr12}+\mu^{-\fr14}\mathcal{E}_{\theta,\neq}^{\fr12}\mathcal{D}_{\theta,\neq}^{\fr12}\left(\sum_{\substack{ k \neq 0 }}|k|^{2m}\||k|\partial_y \phi_{k}\|\|\partial_{yy} \phi_{k}\|\right)^{\fr12}\n\\
		&+\mathcal{E}_{\theta,\neq}^{\fr12}\mathcal{D}_{\theta,\neq}^{\fr12}\left(\sum_{\substack{ k \neq 0 }}|k|^{2m+2}\|\partial_y \phi_{k}\|\|\partial_{yy} \phi_{k}\|\right)^{\fr12}+\mu^{-\fr34}\mathcal{D}_{\theta,\neq}\left(\sum_{\substack{ k \neq 0 }}|k|^{2m+\fr12}\||k|\partial_y \phi_{k}\|\|\partial_{yy} \phi_{k}\|\right)^{\fr12}\n\\ 
		\lesssim&(\nu^{-\fr14}\mu^{-\fr12}+\nu^{-\fr12})\mathcal{E}_{\theta,\neq}^{\fr12}\mathcal{D}_{\theta,\neq}^{\fr12}\mathcal{D}_{\om,\neq}^{\fr12}+\nu^{\fr14}\mu^{-\fr34}\mathcal{E}_{\om,\neq}^{\fr12}\mathcal{D}_{\theta,\neq}. 
	\end{align}
Putting \eqref{i31}--\eqref{i32} into \eqref{I3} shows
\begin{align}\label{i3}
	I_3\lesssim& \mu^{-\frac{2}{3}}\|u_{0}\|_{L^\infty}\mathcal{D}_{\theta,\neq}+\nu^{\frac{1}{4}}\mu^{-\frac{3}{4}} \mathcal{E}^{\frac{1}{2}}_{\om,\neq} \mathcal{D}^{\frac{1}{2}}_{\theta, \neq}\mathcal{D}^{\frac{1}{2}}_{\theta, 0}+(\nu^{-\fr14}\mu^{-\fr12}+\nu^{-\fr12})\mathcal{E}_{\theta,\neq}^{\fr12}\mathcal{D}_{\theta,\neq}^{\fr12}\mathcal{D}_{\om,\neq}^{\fr12}\n\\ 
	&+\nu^{\fr14}\mu^{-\fr34}\mathcal{E}_{\om,\neq}^{\fr12}\mathcal{D}_{\theta,\neq}.
\end{align}
{\it \underline{Estimate of $I_4$:}} First of all, we decompose it into the following two parts
\begin{align}\label{I4}
I_4=&2^{-11}{\rm e}^{2\delta_0 \kappa^{\fr12}t}\sum_{k \neq 0}|k|^{2m-1}\RR\l ik\theta_k,-\sum_{l \in \Z} \partial_y \phi_{k-l} \cdot il  \theta_{l}+ \sum_{l \in \Z} i(k-l)\phi_{k-l} \cdot \partial_{y} \theta_{l}\r\n\\ 
=&2^{-11}{\rm e}^{2\delta_0 \kappa^{\fr12}t}\sum_{k \neq 0}|k|^{2m-1}\RR\l ik\theta_k, -\partial_{y} \phi_{0} \cdot ik \theta_k +ik \phi_{k} \cdot  \partial_{y} \theta_0\rangle\n\\ 
&+2^{-11}{\rm e}^{2\delta_0 \kappa^{\fr12}t}\sum_{k \neq 0}|k|^{2m-1}\RR\l ik\theta_k,  \nabla^\perp_{k-l} \phi_{k-l} \cdot \nabla_l \theta_l \rangle:= I_4^1+I_4^2.			
\end{align}
Recalling the definitions \eqref{2.2}--\eqref{2.13} and using H\"older inequality give
\begin{align}\label{i41}
			I_4^1\lesssim&{\rm e}^{2\delta_0 \kappa^{\fr12}t}\sum_{k \neq 0}|k|^{2m+1}\|\theta_k\|^2\|\partial_y \phi_{0}\|_{L^\infty}+{\rm e}^{2\delta_0 \kappa^{\fr12}t}\sum_{k \neq 0}|k|^{2m+1}\|\theta_k\|\|\phi_k\|_{L^\infty}\| \partial_{y} \theta_0\|\n\\
			\lesssim&{\rm e}^{2\delta_0 \kappa^{\fr12}t}\sum_{k \neq 0}(|k|^{2m+\fr12}\|\theta_k\|^2)^{\fr23}(|k|^{2m+2}\|\theta_k\|^2)^{\fr13}\|\partial_y \phi_{0}\|_{L^\infty}\n\\ 
			&+{\rm e}^{2\delta_0 \kappa^{\fr12}t}\sum_{k \neq 0}|k|^{m}\|\theta_k\||k|^{m+1}\|\phi_k\|_{L^\infty}\| \partial_{y} \theta_0\|\n\\
\lesssim&\mu^{-\frac{2}{3}}\|u_{0}\|_{L^\infty}\mathcal{D}_{\theta,\neq}+\mu^{-\frac{1}{4}}{\rm e}^{\delta_0 \kappa^{\fr12}t}\mathcal{D}^{\frac{1}{2}}_{\theta, \neq}\left(\sum_{k \neq 0}|k|^{2m+2}\|\phi_k\|_{L^\infty}^2\right)^{\fr12}\|\partial_{y} \theta_0\|\n\\
\lesssim& \mu^{-\frac{2}{3}}\|u_{0}\|_{L^\infty}\mathcal{D}_{\theta,\neq}+\mu^{-\frac{1}{2}} \mathcal{E}^{\frac{1}{2}}_{\theta, 0} \mathcal{D}^{\frac{1}{2}}_{\theta, \neq}\mathcal{D}^{\frac{1}{2}}_{\om, \neq}.
		\end{align}
    For $I_4^2$, we first use the two characteristic functions  to
 decompose it into the following two parts, and then by \eqref{psi} and $m>\fr12$, we have
	\begin{align}\label{i42}
		I_4^2\lesssim&{\rm e}^{2\delta_0 \kappa^{\fr12}t}\sum_{\substack{ k,l \neq 0 \\ k \neq l }}|k|^{2m}( \chi_1(k,l) + \chi_2(k,l) )\|\theta_k\|\|\partial_y \phi_{k-l}\|_{L^\infty}|l|\|\theta_{l}\|\n\\ 
		&+{\rm e}^{2\delta_0 \kappa^{\fr12}t}\sum_{\substack{ k,l \neq 0 \\ k \neq l }}|k|^{2m}( \chi_1(k,l) + \chi_2(k,l) )\|\theta_k\||k-l|\|\phi_{k-l}\|_{L^\infty} \|\partial_{y} \theta_{l}\|\n\\
		\lesssim&{\rm e}^{2\delta_0 \kappa^{\fr12}t}\sum_{\substack{ k,l \neq 0 \\ k \neq l }}(|k|^{m}\|\theta_k\|)(|l|^{m+1}\|\theta_{l}\|)\|\partial_y \phi_{k-l}\|_{L^\infty}\n\\ 
		&+{\rm e}^{2\delta_0 \kappa^{\fr12}t}\sum_{\substack{ k,l \neq 0 \\ k \neq l }}(|k|^{m}\|\theta_k\|)(|l|^{\fr14}\|\theta_{l}\|)(|k-l|^{m+\fr34}\|\partial_y \phi_{k-l}\|_{L^\infty})\n\\
		&+{\rm e}^{2\delta_0 \kappa^{\fr12}t}\sum_{\substack{ k,l \neq 0 \\ k \neq l }}(|k|^{m}\|\theta_k\|)(|l|^{m}\|\partial_y \theta_{l}\|)\||k-l|\phi_{k-l}\|_{L^\infty}\n\\ 
		&+{\rm e}^{2\delta_0 \kappa^{\fr12}t}\sum_{\substack{ k,l \neq 0 \\ k \neq l }}(|k|^{m}\|\theta_k\|)\|\partial_y \theta_{l}\|(|k-l|^{m+1}\|\phi_{k-l}\|_{L^\infty})\n\\
		\lesssim&\mu^{-\fr12}\mathcal{E}_{\theta,\neq}^{\fr12}\mathcal{D}_{\theta,\neq}^{\fr12}\left(\sum_{\substack{ k \neq 0 }}|k|^{2m}\|\partial_y \phi_{k}\|\|\partial_{yy} \phi_{k}\|\right)^{\fr12}+\mu^{-\fr14}\mathcal{E}_{\theta,\neq}^{\fr12}\mathcal{D}_{\theta,\neq}^{\fr12}\left(\sum_{\substack{ k \neq 0 }}|k|^{2m+\fr12}\||k|\partial_y \phi_{k}\|\|\partial_{yy} \phi_{k}\|\right)^{\fr12}\n\\
		&+\mu^{-\fr12}\mathcal{E}_{\theta,\neq}^{\fr12}\mathcal{D}_{\theta,\neq}^{\fr12}\left(\sum_{\substack{ k \neq 0 }}|k|^{2m+2}\|\partial_y \phi_{k}\|\|\partial_{yy} \phi_{k}\|\right)^{\fr12}\n\\ 
		\lesssim&(\nu^{-\fr14}\mu^{-\fr12}+\nu^{-\fr12}+\mu^{-\fr12})\mathcal{E}_{\theta,\neq}^{\fr12}\mathcal{D}_{\theta,\neq}^{\fr12}\mathcal{D}_{\om,\neq}^{\fr12}. 
	\end{align}
Substituting \eqref{i41}--\eqref{i42} into \eqref{I4} gives
\begin{align}\label{i4}
	I_4\lesssim& \mu^{-\frac{2}{3}}\|u_{0}\|_{L^\infty}\mathcal{D}_{\theta,\neq}+\mu^{-\frac{1}{2}} \mathcal{E}^{\frac{1}{2}}_{\theta, 0} \mathcal{D}^{\frac{1}{2}}_{\theta, \neq}\mathcal{D}^{\frac{1}{2}}_{\om, \neq}+(\nu^{-\fr14}\mu^{-\fr12}+\nu^{-\fr12}+\mu^{-\fr12})\mathcal{E}_{\theta,\neq}^{\fr12}\mathcal{D}_{\theta,\neq}^{\fr12}\mathcal{D}_{\om,\neq}^{\fr12}.
\end{align}
{\it \underline{Estimate of $I_5$:}} We rewrite this term as follows
\begin{align}\label{I5}
I_5=&2^{-9}\mu^{-\fr12}{\rm e}^{2\delta_0 \kappa^{\fr12}t}\sum_{k \neq 0}|k|^{2m+\fr12}\RR\l y^2\theta_k,\sum_{l \in \Z} \partial_y \phi_{k-l} \cdot il  \theta_{l}- \sum_{l \in \Z} i(k-l)\phi_{k-l} \cdot \partial_{y} \theta_{l}\r\n\\ 
=&2^{-9}\mu^{-\fr12}{\rm e}^{2\delta_0 \kappa^{\fr12}t}\sum_{k \neq 0}|k|^{2m+\fr12}\RR\l y^2\theta_k, \partial_{y} \phi_{0} \cdot ik \theta_k -ik \phi_{k} \cdot  \partial_{y} \theta_0\rangle\n\\ 
&-2^{-9}\mu^{-\fr12}{\rm e}^{2\delta_0 \kappa^{\fr12}t}\sum_{k \neq 0}|k|^{2m+\fr12}\RR\l y^2\theta_k,  \nabla^\perp_{k-l} \phi_{k-l} \cdot \nabla_l \theta_l \rangle:= I_5^1+I_5^2.		
\end{align}
For $k=l$ or $l=0$, we can get
\begin{align}\label{i51}
			I_5^1=&2^{-9}\mu^{-\fr12}{\rm e}^{2\delta_0 \kappa^{\fr12}t}\sum_{k \neq 0}|k|^{2m+\fr12}\RR\l y^2\theta_k, -ik \phi_{k} \cdot  \partial_{y} \theta_0\rangle\n\\
			\lesssim&\mu^{-\fr12}{\rm e}^{2\delta_0 \kappa^{\fr12}t}\sum_{k \neq 0}|k|^{m+\fr12}\|y\theta_k\||k|^{m+1}\|\phi_k\|_{L^\infty}\| y\partial_{y} \theta_0\|\n\\
			\lesssim&\mu^{-\frac{1}{2}}{\rm e}^{2\delta_0 \kappa^{\fr12}t}\mathcal{E}_{\theta,0}^{\fr12}\left(\sum_{k \neq 0}|k|^{2m+1}\|y\theta_k\|^2\right)^{\fr12}\left(\sum_{k \neq 0}|k|^{2m+2}\|\phi_k\|^2_{L^\infty}\right)^{\fr12}\n\\ 
			\lesssim& \mu^{-\frac{1}{2}}\mathcal{E}^{\frac{1}{2}}_{\theta,0} \mathcal{D}^{\frac{1}{2}}_{\theta, \neq}\mathcal{D}^{\frac{1}{2}}_{\om,\neq}.
		\end{align}
For $k\neq l$ and $l\neq 0$, 
\begin{align}\label{i52}
I_5^2=&2^{-9}\mu^{-\fr12}{\rm e}^{2\delta_0 \kappa^{\fr12}t}\sum_{\substack{ k,l \neq 0 \\ k \neq l }}|k|^{2m+\fr12}\chi_1(k,l)\RR\l y^2\theta_k,\partial_y \phi_{k-l} \cdot il  \theta_{l}\r\n\\
&+2^{-9}\mu^{-\fr12}{\rm e}^{2\delta_0 \kappa^{\fr12}t}\sum_{\substack{ k,l \neq 0 \\ k \neq l }}|k|^{2m+\fr12}\chi_2(k,l)\RR\l y^2\theta_k,\partial_y \phi_{k-l} \cdot il  \theta_{l}\r\n\\
&-2^{-9}\mu^{-\fr12}{\rm e}^{2\delta_0 \kappa^{\fr12}t}\sum_{\substack{ k,l \neq 0 \\ k \neq l }}|k|^{2m+\fr12}( \chi_1(k,l) + \chi_2(k,l) )\RR\l y^2\theta_k,i(k-l)\phi_{k-l} \cdot \partial_{y} \theta_{l}\r\n\\
:=&I_5^{2,1}+I_5^{2,2}+I_5^{2,3}.		
\end{align}
  By \eqref{psi} and $m>\fr12$, we have
	\begin{align}\label{i521}
		I_5^{2,1}\lesssim&\mu^{-\fr12}{\rm e}^{2\delta_0 \kappa^{\fr12}t}\sum_{\substack{ k,l \neq 0 \\ k \neq l }}(|k|^{m+\fr12}\|y\theta_k\|)(|l|^{m+1}\|\theta_{l}\|)\|y\partial_y \phi_{k-l}\|_{L^\infty}\n\\
		\lesssim&\mu^{-1}\mathcal{D}_{\theta,\neq}\left(\sum_{k\neq 0}|k|^{2m}\|y\partial_y \phi_{k}\|(\|\partial_y \phi_{k}\|+\|y\om_{k}\|)\right)^{\fr12}   \n\\ 
		\lesssim&\mu^{-1}\mathcal{D}_{\theta,\neq}\left(\sum_{k\neq 0}|k|^{2m}\|y\om_{k}\|(\|\partial_y \phi_{k}\|+\|y\om_{k}\|)\right)^{\fr12}   \n\\  
		\lesssim&\nu^{\fr14}\mu^{-1}\mathcal{E}_{\om,\neq}^{\fr12}\mathcal{D}_{\theta,\neq}, 
	\end{align}
\begin{align}\label{i522}
		I_5^{2,2}\lesssim&\mu^{-\fr12}{\rm e}^{2\delta_0 \kappa^{\fr12}t}\sum_{\substack{ k,l \neq 0 \\ k \neq l }}(|k|^{m+\fr12}\|y\theta_k\|)(|l|^{\fr14}\|y\theta_{l}\|)(|k-l|^{m+\fr34}\|\partial_y \phi_{k-l}\|_{L^\infty})\n\\
		\lesssim&\mu^{-\fr12}{\rm e}^{\delta_0 \kappa^{\fr12}t}\mathcal{D}_{\theta,\neq}^{\fr12}\left(\sum_{k\neq 0}|k|^{2m+\fr12}\||k|\partial_y \phi_{k}\|\|\partial_{yy} \phi_{k}\|\right)^{\fr12}\left(\sum_{k\neq 0}|k|^{2m+\fr12}\|y\theta_k\|^2\right)^{\fr12}   \n\\ 
		\lesssim&\nu^{-\fr14}\mu^{-\fr14}\mathcal{E}_{\theta,\neq}^{\fr12}\mathcal{D}_{\theta,\neq}^{\fr12}\mathcal{D}_{\om,\neq}^{\fr12}, 
	\end{align}
and
	\begin{align}\label{i523}
		I_5^{2,3}\lesssim&\mu^{-\fr12}{\rm e}^{2\delta_0 \kappa^{\fr12}t}\sum_{\substack{ k,l \neq 0 \\ k \neq l }}(|k|^{m+\fr14}\|y\theta_k\|)(|l|^{m+\fr14}\|y\partial_y \theta_{l}\|)|k-l|\|\phi_{k-l}\|_{L^\infty}\n\\ 
		&+\mu^{-\fr12}{\rm e}^{2\delta_0 \kappa^{\fr12}t}\sum_{\substack{ k,l \neq 0 \\ k \neq l }}(|k|^{m+\fr34}\|y\theta_k\|)(|k-l|^{m+\fr34}\|\phi_{k-l}\|_{L^\infty})\|y\de_y\theta_{l}\|\n\\
		\lesssim&\mu^{-\fr12}\mathcal{E}_{\theta,\neq}^{\fr12}\mathcal{D}_{\theta,\neq}^{\fr12}\left(\sum_{\substack{ k \neq 0 }}|k|^{2m+2}\|\phi_{k}\|_{L^\infty}^2\right)^{\fr12}+\mu^{-1}\mathcal{D}_{\theta,\neq}\left(\sum_{\substack{ k \neq 0 }}|k|^{2m+\fr12}\||k|\phi_{k}\|\|\partial_{y} \phi_{k}\|\right)^{\fr12}\n\\ 
		\lesssim&\mu^{-\fr12}\mathcal{E}_{\theta,\neq}^{\fr12}\mathcal{D}_{\theta,\neq}^{\fr12}\mathcal{D}_{\om,\neq}^{\fr12}+\nu^{\fr14}\mu^{-1}\mathcal{E}_{\om,\neq}^{\fr12}\mathcal{D}_{\theta,\neq}. 
	\end{align}
Putting \eqref{i51}--\eqref{i523} into \eqref{I5}, we have
\begin{align}\label{i5}
	I_5\lesssim&\mu^{-\frac{1}{2}}\mathcal{E}^{\frac{1}{2}}_{\theta,0} \mathcal{D}^{\frac{1}{2}}_{\theta, \neq}\mathcal{D}^{\frac{1}{2}}_{\om,\neq}+\nu^{\fr14}\mu^{-1}\mathcal{E}_{\om,\neq}^{\fr12}\mathcal{D}_{\theta,\neq}+(\nu^{-\fr14}\mu^{-\fr14}+\mu^{-\fr12})\mathcal{E}_{\theta,\neq}^{\fr12}\mathcal{D}_{\theta,\neq}^{\fr12}\mathcal{D}_{\om,\neq}^{\fr12}.
\end{align}
Combining \eqref{4.45}, \eqref{i1}, \eqref{i2}, \eqref{i3}, \eqref{i4} and \eqref{i5}, we obtain \eqref{2.17}.

\subsection{Estimate of $\omega$ }
\begin{lem}\label{l4.2}
	Let $(\omega_k,\theta_k)(t,x,y)$ solve \eqref{4.1}. Then we have the following balances:
 \begin{equation}\label{4.46}
  \frac12\ddt \|\omega_k\|^2 +\nu\|\nabla_k\omega_k\|^2=\RR\l\omega_k,ik\theta_k-(u\cdot \nabla \omega)_k\r,
  \end{equation}
  \begin{equation}\label{4.47}
  \frac12\ddt \|\de_y\omega_k\|^2 +\nu\|\de_y\nabla_k\omega_k\|^2+2\RR\l\de_y\omega_k, iky\omega_k\r=\RR\l\de_y\omega_k,ik\de_{y}\theta_k-\de_y(u\cdot \nabla \omega)_k\r,
  \end{equation}
  \begin{align}\label{4.48}
  &\ddt\RR\l iky \omega_k,\de_y\omega_k\r+2|k|^2\| y\omega_k\|^2+4|k|^2\|\de_{y}\phi_k\|^2 \n\\
  =&-2\nu\RR\l iky\de_{y}\omega_k, \Delta_k \omega_k\r-2\RR\l iky\de_{y}\omega_k, ik\theta_k-(u\cdot \nabla \omega)_k\r-\RR\l ik\omega_k, ik\theta_k-(u\cdot \nabla \omega)_k\r,
  \end{align}
  \begin{equation}\label{4.49}
  \frac12\ddt\| y\omega_k\|^2+\nu\|y\nabla_k\omega_k\|^2=\nu\|\omega_k\|^2-4\RR\l y\de_{y} \phi_k, ik\phi_k\r+\RR\l y^2\omega_k, ik\theta_k-(u\cdot \nabla \omega)_k\r,
  \end{equation}
  \begin{equation}\label{4.50}
  \frac12\ddt\| \nabla_k \phi_k\|^2 +\nu\|\omega_k\|^2=2\RR\l y\de_{y} \phi_k,ik\phi_k\r-\RR\l \phi_k, ik\theta_k-(u\cdot \nabla \omega)_k\r.
 \end{equation}
\end{lem}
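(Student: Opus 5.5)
The identities in Lemma \ref{l4.2} are obtained the same way as those in Lemma \ref{l4.1}. We pair \eqref{4.1}$_1$ in $L^2(D)$ with the multipliers $\omega_k$, $-\de_{yy}\omega_k$, $2iky\de_y\omega_k+ik\omega_k$ (through the time derivative of $\RR\l iky\omega_k,\de_y\omega_k\r$), $y^2\omega_k$ and $-\phi_k$, then take real parts and integrate by parts. The boundary conditions used are $\phi_k=\omega_k=\theta_k=0$ on $\de D$; when $D=\R$ or $[0,\infty)$ we also assume enough decay at infinity, which is justified by density. Every term except the nonlocal one $-2ik\phi_k$ is treated exactly as the corresponding term for $\theta_k$ in Lemma \ref{l4.1}. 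The forcing $ik\theta_k$ and the nonlinearity $-(u\cdot\nabla\omega)_k$ are simply kept on the right-hand side.

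For \eqref{4.46}, the transport term $\RR\l\omega_k,iky^2\omega_k\r$ vanishes because it is purely imaginary. For the new term we write $\omega_k=\Delta_k\phi_k$ and integrate by parts once:
\begin{equation*}
\l\Delta_k\phi_k,ik\phi_k\r=ik\|\nabla_k\phi_k\|^2,
\end{equation*}
which is purely imaginary, so its real part vanishes. The viscous term gives $\nu\|\nabla_k\omega_k\|^2$.

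For \eqref{4.47} we pair with $-\de_{yy}\omega_k$. The commutator $[\de_y,iky^2]\omega_k=2iky\omega_k$ produces $2\RR\l\de_y\omega_k,iky\omega_k\r$. For the $\phi$-term we have $\RR\l\de_y\omega_k,ik\de_y\phi_k\r=-\RR\l\Delta_k\phi_k,ik\de_{yy}\phi_k\r$. Its two pieces are $ik\|\de_{yy}\phi_k\|^2$ and $-k^2\l\phi_k,ik\de_{yy}\phi_k\r=ik^3\|\de_y\phi_k\|^2$, both purely imaginary, so this real part also vanishes. The boundary term produced by moving $\de_y$ onto $\de_t\omega_k$ drops because $\de_t\omega_k=0$ on $\de D$.

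For \eqref{4.48} we repeat the computation leading to \eqref{4.4}. The transport part again gives $-2|k|^2\|y\omega_k\|^2$. The extra piece is
\begin{equation*}
\RR\l 2iky\de_y\omega_k+ik\omega_k,\,2ik\phi_k\r=2|k|^2\RR\l 2y\de_y\omega_k+\omega_k,\phi_k\r.
\end{equation*}
We substitute $\omega_k=\de_{yy}\phi_k-k^2\phi_k$ and integrate by parts repeatedly, using $\phi_k=0$ on $\de D$. The $k^2\phi_k$ part contributes $-k^2\|\phi_k\|^2+k^2\|\phi_k\|^2=0$. The $\de_{yy}\phi_k$ part reduces to $-2\|\de_y\phi_k\|^2$ after checking the boundary terms: $y|\de_y\phi_k|^2$ evaluated on $\de D$ vanishes at $y=0$ and at infinity, but not at $y=\pm1$. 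This yields the claimed $+4|k|^2\|\de_y\phi_k\|^2$ on the left-hand side.

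Identity \eqref{4.49} uses the viscous computation from \eqref{4.5}, namely $\l y^2\omega_k,\Delta_k\omega_k\r=-\|y\nabla_k\omega_k\|^2+\|\omega_k\|^2$. The $\phi$-term is $\RR\l y^2\Delta_k\phi_k,2ik\phi_k\r$; integrating by parts once, only the commutator $\de_y(y^2)=2y$ survives in the real part, giving $-4\RR\l y\de_y\phi_k,ik\phi_k\r$.

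For \eqref{4.50} we pair with $-\phi_k$, so that $\RR\l\de_t\omega_k,-\phi_k\r=\frac12\ddt\|\nabla_k\phi_k\|^2$ and $-\nu\RR\l\Delta_k\omega_k,-\phi_k\r=\nu\|\omega_k\|^2$. The term $\l 2ik\phi_k,\phi_k\r$ is purely imaginary and drops. The transport term $\RR\l iky^2\Delta_k\phi_k,-\phi_k\r$ integrates by parts to $2\RR\l y\de_y\phi_k,ik\phi_k\r$; here $y^2|\nabla_k\phi_k|^2$ contributes only an imaginary part.

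I expect the main obstacle to be the careful bookkeeping of signs and of boundary contributions in \eqref{4.48}--\eqref{4.50}, especially on $[-1,1]$. There $y\ne0$ on $\de D$, and only $\phi_k$ and $\omega_k$ vanish, not $\de_y\phi_k$. Those boundary terms must be shown to cancel in pairs or to have vanishing real part.
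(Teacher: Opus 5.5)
Your multipliers and integrations by parts are the same as the paper's, and for $D=\R$ and $D=[0,\infty)$ your argument is correct. Expanding $\omega_k=\de_{yy}\phi_k-k^2\phi_k$ in \eqref{4.48}, instead of using the paper's commutator $y\Delta_k\phi_k-\Delta_k(y\phi_k)=-2\de_y\phi_k$, is only cosmetic.

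The genuine gap is \eqref{4.48} on $D=[-1,1]$. The step you defer (``cancel in pairs or have vanishing real part'') cannot be carried out. Done exactly, the $\phi$-term contributes $-2|k|^2\RR\l 2y\de_y\omega_k+\omega_k,\phi_k\r$ to $\ddt\RR\l iky\omega_k,\de_y\omega_k\r$, and
\[
\RR\l 2y\de_y\omega_k+\omega_k,\phi_k\r=2\|\de_y\phi_k\|^2-\bigl[y|\de_y\phi_k|^2\bigr]_{\de D}.
\]
In particular, your intermediate ``$-2\|\de_y\phi_k\|^2$'' has the wrong sign. On the channel the bracket equals $|\de_y\phi_k(1)|^2+|\de_y\phi_k(-1)|^2$. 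This is real and nonnegative, so its real part does not vanish. Nothing cancels it either: every other boundary term in \eqref{4.48} carries a factor $\omega_k$, $\de_t\omega_k$ or $\phi_k$ and vanishes, and \eqref{4.49}--\eqref{4.50} are separate identities with no surviving boundary terms.

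Under the slip condition, $\de_y\phi_k(\pm1)=-u^1_k(\pm1)$ need not vanish. For example, $\phi_k=\sin\bigl(\frac{\pi}{2}(y+1)\bigr)$ satisfies $\phi_k=\omega_k=0$ at $y=\pm1$ and $\omega_k=-\bigl(\frac{\pi^2}{4}+k^2\bigr)\phi_k$. Hence $\RR\l 2y\de_y\omega_k+\omega_k,\phi_k\r=0$, while $2\|\de_y\phi_k\|^2=\frac{\pi^2}{2}$. So on $[-1,1]$ the correct identity is \eqref{4.48} with $+2|k|^2\bigl(|\de_y\phi_k(1)|^2+|\de_y\phi_k(-1)|^2\bigr)$ added to the right-hand side. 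The identity \eqref{4.48} as stated is false there.

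The paper's proof has the same defect. In its chain for the third term, the step $-2|k|^2\RR\l\de_y\Delta_k\phi_k,y\phi_k\r=-2|k|^2\RR\l\Delta_k(y\phi_k),\de_y\phi_k\r$ silently drops exactly $2|k|^2\bigl[y|\de_y\phi_k|^2\bigr]_{\de D}$. That is harmless on $\R$ and $[0,\infty)$ but not on the channel.

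There are two honest fixes. You can state \eqref{4.48} only for $D=\R,[0,\infty)$. Or you can keep the boundary term. It then enters $\ddt E_\omega[\omega_k]$ as $+5|k|\bigl(|\de_y\phi_k(1)|^2+|\de_y\phi_k(-1)|^2\bigr)$, with the unfavourable sign. It would therefore have to be controlled separately when proving \eqref{2.16} for Theorem \ref{thm1.2}; the lemma as stated does not provide this.
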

\begin{proof}
Taking the inner product of system \eqref{4.1}$_1$ with $\omega_k$ and $-\de_{yy}\omega_k$ in $L^2$ respectively, and integrating the resulting equation by parts, we can obtain \eqref{4.46}--\eqref{4.47}. 

Turning to \eqref{4.48}, we use \eqref{4.1}$_1$ to compute
\begin{align}
\ddt\RR\l iky\omega_k,\de_y\omega_k\r=&\RR\l iky\de_{t} \omega_k,\de_y\omega_k\r+\RR\l iky\omega_k,\de_{yt}\omega_k\r=-2\RR\l iky\de_{y}\omega_k,\de_{t} \omega_k\r-\RR\l ik\omega_k,\de_{t}\omega_k\r\n\\ 
=&\RR\l 2iky\de_{y}\omega_k+ik\omega_k, iky^2\omega_k -2ik\phi_k-\nu \Delta_k\omega_k+(u\cdot \nabla \omega)_k-ik\theta_k\r   \n\\
=&-\nu\RR\l 2iky\de_{y}\omega_k+ik\omega_k, \Delta_k \omega_k\r+\RR\l 2iky\de_{y}\omega_k+ik\omega_k,(u\cdot \nabla \omega)_k-ik\theta_k\r   \n\\
&+\RR\l 2iky\de_{y}\omega_k+ik\omega_k, iky^2\omega_k -2ik\phi_k\r .
\end{align}
For the second term, we can compute
\begin{align}
\RR\l 2iky\de_{y}\omega_k+ik\omega_k, iky^2\omega_k\r=2\RR\l iky^3\de_{y}\omega_k, ik\omega_k\r+\RR\l ik\omega_k, iky^2\omega_k\r=-2|k|^2\| y\omega_k\|^2.\notag	
\end{align}
Lastly, the third term can be estimate as
\begin{align}
-2\RR\l 2iky\de_{y}\omega_k+ik\omega_k, ik\phi_k\r=&-4|k|^2\RR\l y\de_{y}\omega_k, \phi_k\r-2|k|^2\RR\l \omega_k, \phi_k\r	\n\\ 
=&-2|k|^2\RR\l y\de_{y}\omega_k, \phi_k\r+2|k|^2\RR\l y\omega_k, \de_{y}\phi_k\r\n\\ 
=&-2|k|^2\RR\l \de_y\Delta_k\phi_k,y\phi_k\r+2|k|^2\RR\l y\Delta_k\phi_k,\de_{y}\phi_k\r\n \\  
=&2|k|^2\RR\l y\Delta_k\phi_k,\de_{y}\phi_k\r-2|k|^2\RR\l \Delta_k(y\phi_k),\de_{y}\phi_k\r=-4|k|^2\| \de_{y}\phi_k\|^2,\n
\end{align}
and then \eqref{4.48} follows. For \eqref{4.49}, we have
\begin{align}
\frac12\ddt\| y\omega_k\|^2
&=\RR\l y\omega_k, y(2ik\phi_k+\nu\Delta_k \omega_k-iky^2\omega_k +ik\theta_k-(u\cdot \nabla \omega)_k)\r\notag\\
&=2\l y\Delta\phi_k,iky\phi_k\r+\nu\l y\omega_k, y \Delta_k \omega_k\r +\RR\l y\omega_k, y(ik\theta_k-(u\cdot \nabla \omega)_k)\r \notag\\
&=\nu\|\omega_k\|^2-\nu\|y\nabla_k\omega_k\|^2-4\RR\l y\de_{y}\phi_k,ik\phi_k\r+\RR\l y^2\omega_k,ik\theta_k-(u\cdot \nabla \omega)_k)\r.\n
 \end{align}
For \eqref{4.50}, we multiply \eqref{4.1}$_1$ by $-\phi_k$ and integrate it to obtain
\begin{align}
-\l\de_t\omega_k,\phi_k\r -\l iky^2\omega_k,\phi_k\r =-\l\nu\Delta_k\omega_k+ik\theta_k-(u\cdot \nabla \omega)_k,\phi_k\r.\n
\end{align}
Notice that
\begin{align}
 -\l iky^2\Delta_k \phi_k,\phi_k\r=2\l iky\de_{y} \phi_k,\phi_k\r+\l iky^2\nabla_k \phi_k,\nabla_k\phi_k\r=2\l iky \de_{y} \phi_k,\phi_k\r,\n
\end{align}
hence we have \eqref{4.50}.
The proof of Lemma \ref{l4.2} is over.
\end{proof}
{\bf Proof of \eqref{2.16}:} We firstly observe that 
\begin{equation}\n
 \l \de_{y}\phi_k, y\omega_k \r=\l \de_{y}\phi_k, y\de_{yy}\phi_k \r-\l \de_{y}\phi_k, y|k|^2\phi_k \r=-\frac12\|\de_{y}\phi_k\|^2+\frac12|k|^2\|\phi_k\|^2,
\end{equation}
then it follows that
\begin{align}\label{4.59}
|k|^2\|\phi_k\|^2=\|\de_{y}\phi_k\|^2+2\l\de_{y}\phi_k, y\omega_k \r\leq 2\|\de_{y}\phi_k\|^2+\|y\omega_k\|^2.
\end{align}
According to the energy estimation in Lemma \ref{l4.2}, \eqref{4.59} and the definition of $E_{\omega}[\omega_k]$ in \eqref{2.6}, one can verify that the following inequality holds
\begin{align}\label{4.60}
&\ddt E_{\omega}[\omega_k] +\nu\|\nabla_k\omega_k\|^2+\nu^{\frac32}|k|^{-\frac12}\|\de_y\nabla_k\omega_k\|^2+4|k|\| y\omega_k\|^2+|k|\|\nabla_k\phi_k\|^2\n\\ 
&\qquad+100\nu^{\frac12}|k|^{\frac12}\left(\|y\nabla_k\omega_k\|^2 +\|\omega_k\|^2\right)\n\\
\leq&-2\nu^{\frac12}|k|^{-\frac12}\RR\l iky\omega_k,\de_y\omega_k\r-5\nu|k|^{-1}\RR\l \Delta_k \omega_k, iky\de_{y}\omega_k\r+\RR\l\omega_k,ik\theta_k-(u\cdot \nabla \omega)_k\r\n\\ 
&+\nu^{\frac12}|k|^{-\frac12}\RR\l\de_y\omega_k,ik\de_{y}\theta_k-\de_y(u\cdot \nabla \omega)_k\r-5|k|^{-1}\RR\l iky\de_{y}\omega_k, ik\theta_k-(u\cdot \nabla \omega)_k\r\n\\ 
&-\fr52|k|^{-1}\RR\l ik\omega_k, ik\theta_k-(u\cdot \nabla \omega)_k\r+100\nu^{-\frac12}|k|^{\frac12}\RR\l y^2\omega_k, ik\theta_k-(u\cdot \nabla \omega)_k\r\n\\ 
&-200\nu^{-\frac12}|k|^{\frac12}\RR\l \phi_k, ik\theta_k-(u\cdot \nabla \omega)_k\r.
\end{align}

We first handle the first two terms on the right-hand side. Using Young inequality, we obtain
\begin{align}\label{4.61}
-2\nu^{\frac12}|k|^{-\frac12}\RR\l iky\omega_k,\de_y\omega_k\r\leq \frac{\nu}{2} \|\nabla_k \omega_k\|^2+ 2|k|\|y\omega_k\|^2,
\end{align}
\begin{align}\label{4.62}
-5\nu|k|^{-1}\RR\l \Delta_k \omega_k, iky\de_{y}\omega_k\r=&5\nu|k|^{-1}\RR\l y|k|^2 \omega_k, ik\de_{y}\omega_k\r-5\nu|k|^{-1}\RR\l \de_{yy} \omega_k, iky\de_{y}\omega_k\r\n\\ 
\leq& 5\nu|k|^{2}\|y\omega_k\|\|\de_{y}\omega_k\|+5\nu\|\de_{yy} \omega_k\|\|y\de_{y}\omega_k\| \n\\ 
\leq&10\nu\|\de_{y} \nabla_k\omega_k\|\|y\nabla_k\omega_k\|  \n\\
\leq&\frac{1}{2}\nu^{\fr32}|k|^{-\fr12}\|\de_{y} \nabla_k\omega_k\|^2  +50\nu^{\fr12}|k|^{\fr12}\|y\nabla_k\omega_k\|^2.
\end{align}
Then, substituting \eqref{4.61}--\eqref{4.62} into \eqref{4.60}, and multiplying both sides of the resulting equation by ${\rm e}^{2\delta_0 \kappa^{\fr12}t}$, after performing some integration by parts, we arrive at
\begin{align}\label{4.63}
&\ddt \mathcal{E}_{\om,\neq} +\fr12\mathcal{D}_{\om,\neq}-2\delta_0\kappa^{\fr12 }\mathcal{E}_{\om,\neq}\n\\
\leq&{\rm e}^{2\delta_0 \kappa^{\fr12}t}\sum_{k \neq 0}|k|^{2m}\RR\l\omega_k,ik\theta_k\r+\nu^{\frac12}{\rm e}^{2\delta_0 \kappa^{\fr12}t}\sum_{k \neq 0}|k|^{2m-\frac12}\RR\l\de_y\omega_k,ik\de_{y}\theta_k\r\n\\ 
&-5{\rm e}^{2\delta_0 \kappa^{\fr12}t}\sum_{k \neq 0}|k|^{2m-1}\RR\l iky\de_{y}\omega_k, ik\theta_k\r-\fr52{\rm e}^{2\delta_0 \kappa^{\fr12}t}\sum_{k \neq 0}|k|^{2m-1}\RR\l ik\omega_k, ik\theta_k\r\n\\ 
&+100\nu^{-\frac12}{\rm e}^{2\delta_0 \kappa^{\fr12}t}\sum_{k \neq 0}|k|^{2m+\fr12}\RR\l y^2\omega_k, ik\theta_k\r-200\nu^{-\frac12}{\rm e}^{2\delta_0 \kappa^{\fr12}t}\sum_{k \neq 0}|k|^{2m+\fr12}\RR\l \phi_k, ik\theta_k\r\n\\ 
&-{\rm e}^{2\delta_0 \kappa^{\fr12}t}\sum_{k \neq 0}|k|^{2m}\RR\l\omega_k,(u\cdot \nabla \omega)_k\r-\nu^{\frac12}{\rm e}^{2\delta_0 \kappa^{\fr12}t}\sum_{k \neq 0}|k|^{2m-\frac12}\RR\l\de_y\omega_k,\de_y(u\cdot \nabla \omega)_k\r\n\\ 
&+5{\rm e}^{2\delta_0 \kappa^{\fr12}t}\sum_{k \neq 0}|k|^{2m-1}\RR\l iky\de_{y}\omega_k, (u\cdot \nabla \omega)_k\r+\fr52{\rm e}^{2\delta_0 \kappa^{\fr12}t}\sum_{k \neq 0}|k|^{2m-1}\RR\l ik\omega_k, (u\cdot \nabla \omega)_k\r\n\\ 
&-100\nu^{-\frac12}{\rm e}^{2\delta_0 \kappa^{\fr12}t}\sum_{k \neq 0}|k|^{2m+\fr12}\RR\l y^2\omega_k, (u\cdot \nabla \omega)_k\r\n\\ 
&+200\nu^{-\frac12}{\rm e}^{2\delta_0 \kappa^{\fr12}t}\sum_{k \neq 0}|k|^{2m+\fr12}\RR\l \phi_k, (u\cdot \nabla \omega)_k\r:=\sum_{i=1}^{12}J_i.
\end{align}
First of all, it is easy to check that if $\delta_0\leq 2^{-16}$, 
\begin{equation}\label{4.64}
	2\delta_0\kappa^{\fr12 }\mathcal{E}_{\om,\neq}\leq \frac{1}{16}\mathcal{D}_{\om,\neq}.
\end{equation}
Next, using H\"older inequality, Young inequality and definitions \eqref{2.2}--\eqref{2.13} yields
\begin{align}\label{4.65}
	\sum_{i=1}^{4}J_i\leq&\fr72{\rm e}^{2\delta_0 \kappa^{\fr12}t}\sum_{k \neq 0}|k|^{2m+1}\|\omega_k\|\|\theta_k\|+\nu^{\frac12}{\rm e}^{2\delta_0 \kappa^{\fr12}t}\sum_{k \neq 0}|k|^{2m+\frac12}\|\de_{yy}\omega_k\|\|\theta_k\|\n\\ 
&+5{\rm e}^{2\delta_0 \kappa^{\fr12}t}\sum_{k \neq 0}|k|^{2m+1}\|y\de_{y}\omega_k\|\|\theta_k\|\n\\
\leq&\fr72\nu^{-\frac14}{\rm e}^{2\delta_0 \kappa^{\fr12}t}\sum_{k \neq 0}(\nu^{\frac14}|k|^{m+\fr14}\|\omega_k\|)(|k|^{m+\fr34}\|\theta_k\|)\n\\ 
&+\nu^{-\frac14}{\rm e}^{2\delta_0 \kappa^{\fr12}t}\sum_{k \neq 0}(\nu^{\frac34}|k|^{m-\fr14}\|\de_{yy}\omega_k\|)(|k|^{m+\frac34}\|\theta_k\|)\n\\ 
&+5\nu^{-\frac14}{\rm e}^{2\delta_0 \kappa^{\fr12}t}\sum_{k \neq 0}(\nu^{\frac14}|k|^{m+\fr14}\|y\de_{y}\omega_k\|)(|k|^{m+\frac34}\|\theta_k\|)\n\\
\leq& \frac{1}{16}\mathcal{D}_{\om,\neq}+C_1\nu^{-\frac12}{\rm e}^{2\delta_0 \kappa^{\fr12}t}\sum_{k \neq 0}(|k|^{2m+\frac12}\|\theta_k\|^2)^{\fr13}(|k|^{2m+2}\|\theta_k\|^2)^{\fr23}\n\\
\leq& \frac{1}{16}\mathcal{D}_{\om,\neq}+C_1\nu^{-\frac12}\mu^{-\frac56}\mathcal{D}_{\theta,\neq},
\end{align}
\begin{align}\label{4.66}
J_5+J_6\leq& 100\nu^{-\frac12}{\rm e}^{2\delta_0 \kappa^{\fr12}t}\sum_{k \neq 0}|k|^{2m+\fr32}\|y\omega_k\|\|y\theta_k\|+200\nu^{-\frac12}{\rm e}^{2\delta_0 \kappa^{\fr12}t}\sum_{k \neq 0}|k|^{2m+\fr32}\|\phi_k\|\|\theta_k\|\n\\ 
\leq&100\nu^{-\frac12}{\rm e}^{2\delta_0 \kappa^{\fr12}t}\sum_{k \neq 0}(|k|^{m+\fr12}\|y\omega_k\|)(|k|^{m+1}\|y\theta_k\|)\n\\ 
&+200\nu^{-\frac12}{\rm e}^{2\delta_0 \kappa^{\fr12}t}\sum_{k \neq 0}(|k|^{m+\fr32}\|\phi_k\|)(|k|^{m}\|\theta_k\|)\n\\
\leq& \frac{1}{16}\mathcal{D}_{\om,\neq}+C_2\nu^{-1}{\rm e}^{2\delta_0 \kappa^{\fr12}t}\left(\sum_{k \neq 0}(|k|^{2m+1}\|y\theta_k\|^2)^{\fr13}(|k|^{2m+\fr52}\|y\theta_k\|^2)^{\fr23}+\sum_{k \neq 0}|k|^{2m}\|\theta_k\|^2\right)\n\\
\leq& \frac{1}{16}\mathcal{D}_{\om,\neq}+C_2\nu^{-1}\mu^{-\fr12}\mathcal{D}_{\theta,\neq}.	
\end{align}

Similar to \eqref{i1}, \eqref{i2}, \eqref{i3}, \eqref{i4} and \eqref{i5}, we can estimate $J_7$--$J_{11}$ as
\begin{align}\label{j7}
       J_7\lesssim\nu^{-\frac{1}{2}} \mathcal{E}^{\frac{1}{2}}_{\omega,\neq} \mathcal{D}^{\frac{1}{2}}_{\omega, \neq}\mathcal{D}^{\frac{1}{2}}_{\omega, 0}+\nu^{-\frac{2}{3}}\mathcal{E}^{\frac{1}{2}}_{\omega,\neq} \mathcal{D}_{\omega,\neq},
\end{align}
\begin{align}\label{j8}
       J_8\lesssim& \nu^{-\frac12}\mathcal{E}_{\omega,0}^{\fr12}\mathcal{D}_{\omega,\neq}+\nu^{-\frac{13}{24}}\mathcal{E}_{\om,\neq}^{\frac12}\mathcal{D}_{\omega,\neq},
\end{align}
\begin{align}\label{j9}
       J_9\lesssim& \nu^{-\frac{2}{3}}\|u_{0}\|_{L^\infty}\mathcal{D}_{\omega,\neq}+\nu^{\frac{1}{4}}\nu^{-\frac{3}{4}} \mathcal{E}^{\frac{1}{2}}_{\om,\neq} \mathcal{D}^{\frac{1}{2}}_{\omega, \neq}\mathcal{D}^{\frac{1}{2}}_{\omega, 0}+\nu^{-\fr34}\mathcal{E}_{\omega,\neq}^{\fr12}\mathcal{D}_{\omega,\neq},
\end{align}
\begin{align}\label{j10}
      J_{10}\lesssim& \nu^{-\frac{2}{3}}\|u_{0}\|_{L^\infty}\mathcal{D}_{\omega,\neq}+\nu^{-\frac{1}{2}} \mathcal{E}^{\frac{1}{2}}_{\omega, 0} \mathcal{D}_{\omega, \neq}+\nu^{-\fr34}\mathcal{E}_{\omega,\neq}^{\fr12}\mathcal{D}_{\omega,\neq},
\end{align}
\begin{align}\label{j11}
       J_{11}\lesssim&\nu^{-\frac{1}{2}}\mathcal{E}^{\frac{1}{2}}_{\omega,0} \mathcal{D}_{\omega, \neq}+\nu^{-\fr34}\mathcal{E}_{\om,\neq}^{\fr12}\mathcal{D}_{\omega,\neq}.
\end{align}
For $J_{12}$, we can prove 
\begin{align}\label{j12}
J_{12}=&200\nu^{-\frac{1}{2}}{\rm e}^{2\delta_0 \kappa^{\fr12}t}\sum_{k \neq 0}|k|^{2m+\fr12}\RR\langle \phi_k, -\sum_{l \in \Z} \partial_y \phi_{k-l} \cdot il  \omega_{l}+ \sum_{l \in \Z} i(k-l)\phi_{k-l} \cdot \partial_{y} \omega_{l}  \rangle\n\\
=&-200\nu^{-\frac{1}{2}}{\rm e}^{2\delta_0 \kappa^{\fr12}t}\sum_{k \neq 0}|k|^{2m+\fr12} \RR\langle \phi_k, \partial_{y} \phi_{0} \cdot ik \omega_k \rangle \n\\ 
&+200\nu^{-\frac{1}{2}}{\rm e}^{2\delta_0 \kappa^{\fr12}t}\sum_{k \neq 0}|k|^{2m+\fr12} \RR\langle \phi_k,  ik \phi_{k} \cdot  \partial_{y} \omega_0 \rangle\n\\
&+200\nu^{-\frac{1}{2}}{\rm e}^{2\delta_0 \kappa^{\fr12}t}\sum_{\substack{ k,l \neq 0 \\ k \neq l }} |k|^{2m+\fr12} \RR\langle \phi_k,  -\partial_y \phi_{k-l} \cdot il  \omega_{l}+ i(k-l)\phi_{k-l} \cdot \partial_{y} \omega_{l} \rangle\n\\
\lesssim&\nu^{-\frac{1}{2}}{\rm e}^{2\delta_0 \kappa^{\fr12}t}\sum_{k \neq 0}(|k|^{m+\fr32}\|\phi_k\|)(|k|^{m}\omega_k \|)\|\partial_{y} \phi_{0}\|_{L^\infty}\n\\ 
&+\nu^{-\frac{1}{2}}{\rm e}^{2\delta_0 \kappa^{\fr12}t}\sum_{\substack{ k,l \neq 0 \\ k \neq l }}(|k|^{m+\fr32}\|\phi_k\|)(\||l|^{m}\omega_l \|)\|\partial_{y} \phi_{k-l}\|_{L^\infty}\n\\ 
&+\nu^{-\frac{1}{2}}{\rm e}^{2\delta_0 \kappa^{\fr12}t}\sum_{\substack{ k,l \neq 0 \\ k \neq l }}(|k|^{m+\fr34}\|\phi_k\|)\|\omega_l \|(|k-l|^{m+\fr34}\|\partial_{y} \phi_{k-l}\|_{L^\infty})\n\\ 
\lesssim&\left(\nu^{-\fr34}\|u_{0}\|_{L^\infty}\mathcal{D}_{\omega,\neq}^{\fr12}+\nu^{-\frac{1}{2}}{\rm e}^{\delta_0 \kappa^{\fr12}t}\mathcal{E}_{\omega,\neq}^{\fr12}\left(\sum_{\substack{ k \neq 0 }}|k|^{2m}\|\partial_{y} \phi_{k}\|\|\partial_{yy} \phi_{k}\|\right)^{\fr12}\right)\left(\sum_{\substack{ k \neq 0 }}|k|^{2m+3}\|\phi_{k}\|^2\right)^{\fr12}\n\\ 
&+\nu^{-\frac{1}{2}}{\rm e}^{\delta_0 \kappa^{\fr12}t}\mathcal{E}_{\omega,\neq}^{\fr12}\left(\sum_{\substack{ k \neq 0 }}|k|^{2m+\fr32}\|\phi_{k}\|^2\right)^{\fr12}\left(\sum_{\substack{ k \neq 0 }}|k|^{2m+\fr32}\|\partial_{y} \phi_{k}\|\|\partial_{yy} \phi_{k}\|\right)^{\fr12}\n\\ 
 \lesssim&\nu^{-\fr34}\|u_{0}\|_{L^\infty}\mathcal{D}_{\omega,\neq}+\nu^{-\fr34}\mathcal{E}_{\om,\neq}^{\fr12}\mathcal{D}_{\omega,\neq}.      
\end{align}	
Putting \eqref{4.64}--\eqref{j12} into \eqref{4.63} gives \eqref{2.16}.

Thus,we have completed the proof of Propositions \ref{p2} and \ref{p4}.

\vspace{1cm}

\noindent{\bf Conflict of Interest} The authors declare that they have no conflict of interest.

\vspace{0.5cm}
\noindent{\bf Acknowledgments:} This research was not supported by any funding.

\end{document}